\theoremstyle{plain}
\newtheorem{theorem}{Theorem}[section]
\newtheorem{corollary}{Corollary}[section]
\newtheorem{lemma}{Lemma}[section]
\newtheorem{proposition}{Proposition}[section]
\newtheorem{problem}{Problem}[section]
\newtheorem{remark}{Remark}[section]
\newcommand{\PSL}{\mathop{\mathrm{PSL}}}
\newcommand{\PGL}{\mathop{\mathrm{PGL}}}
\newcommand{\PSU}{\mathop{\mathrm{PSU}}}
\newcommand{\PSp}{\mathop{\mathrm{PSp}}}
\begin{document}
\justifying

  \setcounter{Maxaffil}{3}
   \title{Intersection subgroup graph with forbidden subgraphs}
      \author[a ]{Santanu Mandal\thanks{santanu.vumath@gmail.com}}

\author[ b]{Pallabi Manna\thanks{mannapallabimath001@gmail.com}}
 \affil[ a]{Department of Mathematics,}
   \affil[ ]{National Institute of Technology,}
   \affil[ ]{Rourkela - 769008, India}
   \affil[b ]{Harish Chandra Research Institute,}
   \affil[ ]{Prayagraj - 211019, India}
  
   \maketitle

\begin{abstract}
Let $G$ be a group. The intersection subgroup graph of $G$ (introduced by Anderson et al. \cite{anderson}) is the simple graph $\Gamma_{S}(G)$ whose vertices are those non-trivial subgroups say $H$ of $G$ with $H\cap K=\{e\}$ for some non-trivial subgroup $K$ of $G$; two distinct vertices $H$ and $K$ are adjacent if and only if $H\cap K=\{e\}$, where $e$ is the identity element of $G$. In this communication, we explore the groups whose intersection subgroup graph belongs to several significant graph classes including cluster graphs, perfect graphs, cographs, chordal graphs, bipartite graphs, triangle-free and claw-fee graphs. We categorise each nilpotent group $G$ so that $\Gamma_S(G)$ belongs to the above classes. We entirely classify the simple group of Lie type whose intersection subgroup graph is a cograph. Moreover, we deduce that $\Gamma_{S}(G)$ is neither a cograph nor a chordal graph if $G$ is a torsion-free nilpotent group.
\end{abstract}




\section{Introduction}
In algebraic graph theory, graphs defined on groups have been paid significant attention over the past few decades. Group-derived graphs display excellent network properties, such as large girth, which are crucial for determining how connected the network is. These graphs also have a wide range of uses, such as in automata theory (see \cite{K_1, K_2}). The intersection subgroup graph is yet another fresh addition to this collection. The intersection subgroup graph of a group $G$ is defined as the vertex set consists all such non-trivial subgroups (say $H$) of $G$ such that $H\cap K=\{e\}$ (where, $K$ is any non-trivial subgroup of $G$) and two distinct vertices $M, N$ are connected by an edge if and only if $M\cap N=\{e\}$, where $e$ is the identity element of the group $G$. This graph contributes very well to comprehend a group's subgroup structures. Anderson et al. \cite{anderson} introduced the idea of construction of such a nice graph. They studied various graph theoretic properties with an emphasis on connectedness, completeness, planarity, complete bipartiteness etc. of the intersection subgroup graph. In the same paper, the authors proved that the intersection subgroup graph is subgroup-closed i.e., if $H$ is a subgroup of $G$ then $\Gamma_{S}(H)$ is an induced subgraph of $\Gamma_{S}(G)$. This subgroup-closed property of the intersection subgroup graph helps us a lot in our study. \\
In graph theory, there are several graph classes including planar graphs, cluster graphs, perfect graphs, cographs, chordal graphs, line graphs etc that can be represented in terms of forbidden subgraphs. In our study, we cover the classes of cluster graphs, perfect graphs, cographs, chordal graphs, bipartite graphs, triangle-free and claw-fee graphs. The definitions of each of these graphs are given in the respective section.\\
In this paper, we fully characterize the nilpotent groups whose intersection subgroup graph is a perfect graph, a triangle-free graph, cluster graph, claw-free graph, cograph, chordal graph.\\
Analogously, we completely determine the simple group $G$, a group of Lie type of low rank, such that $\Gamma_{S}(G)$ is a cograph. In this case, we conclude that $G$ is one of the group: $ \PSL(2, 2), \PSL(2, 3)$ and 
$G={}^{2}{B_{2}}(q)$ where $q=2^{2e+1}(q \geq 8)$ such that the numbers $q-1,~ q\pm \sqrt{2q}+1$ are either prime power or of the form $p^{a}q^{b}(a, b\geq 1)$.\\
Moreover, we prove that there is no sporadic simple group $G$ for which $\Gamma_{S}(G)$ is either a cograph or a chordal graph. We also analyze the symmetric group ($S_{n}$), the alternating group ($A_{n}$), the dihedral group ($D_{n}$) such that their intersection graph is a cograph, a chordal graph. \\
In the last section, we consider the torsion-free nilpotent group. A group is called torsion-free if its every non-identity elements are of infinite order. We conclude that the intersection subgroup graph of a torsion-free nilpotent group is neither a cograph nor a chordal graph.\\
In this paper, the notation $P_{n}$ stands for a path on $n$-vertices. We use the notation $C_{n}$ for a cyclic group of order $n$ as well as a cycle of length $n$. The context will be clear in each case which is intended. Now we want to recall the definition of an induced subgraph of a graph. The induced subgraph $\Gamma[S]$ of a graph $\Gamma=(V, E)$ is the graph whose vertex set is $S\subset V$ and edge set consists all those edges of $E$ that have both end points in $S$. If a graph does not contain $H$ as its induced subgraph then the graph is called a $H$-free graph.\\
Before moving on to the next section, we want to recall a theorem from group theory which we used.
\begin{theorem}\textit{(Kulakoff Theorem)}\\
Let $G$ be a group of order $p^{\alpha}$. Then\\
a) the number of subgroups of prime power order is congruent to $1$ (mod $p$),\\
b) if $G$ has unique subgroup of order $p^{\beta}$ for all $\beta$ satisfying $1<\beta \leq \alpha$, then $G$ is cyclic or $\beta =1$ and $p=2$, $G$ is the generalized quaternion group $Q_{2^{\alpha}}$.
\end{theorem}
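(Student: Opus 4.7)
The plan is to prove both parts by induction on $\alpha$, writing $n_{\beta}$ for the number of subgroups of $G$ of order $p^{\beta}$.

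For part (a), since every subgroup of the $p$-group $G$ has order $p^{k}$ for some $k$, it suffices to establish the order-by-order statement $n_{\beta} \equiv 1 \pmod{p}$ for each $0 \le \beta \le \alpha$. The standard tool is the conjugation action of $G$ on the set $\mathcal{S}_{\beta}$ of subgroups of order $p^{\beta}$: all orbits have size a power of $p$, so $n_{\beta}$ is congruent modulo $p$ to the number of \emph{normal} subgroups of $G$ of order $p^{\beta}$. To count those, I pick a central subgroup $\langle z \rangle$ of order $p$ (guaranteed by the non-triviality of $Z(G)$ in a $p$-group) and split the normal subgroups of order $p^{\beta}$ into those containing $\langle z \rangle$ and those intersecting it trivially. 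The former correspond bijectively via $N \mapsto N/\langle z \rangle$ to normal subgroups of $G/\langle z \rangle$ of order $p^{\beta-1}$; the latter can be paired with the former via $N \mapsto N \langle z \rangle$. Applying the inductive hypothesis to the smaller $p$-group $G/\langle z \rangle$ completes the count.

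For part (b), assume $G$ has a unique subgroup of order $p^{\beta}$ for every $\beta$ with $1 < \beta \le \alpha$. In particular, $G$ has a unique maximal subgroup $M$ of order $p^{\alpha-1}$. Since every proper subgroup of a $p$-group is contained in some maximal subgroup, every proper subgroup of $G$ lies inside $M$, and so $M$ inherits the uniqueness hypothesis. By the inductive hypothesis, $M$ is cyclic (or, in the $p=2$ case, possibly generalized quaternion). Writing $M = \langle x \rangle$ in the cyclic case, pick $y \in G \setminus M$; conjugation by $y$ induces an automorphism $\phi$ of $\langle x \rangle$ of $p$-power order. For odd $p$, the group $\Aut(C_{p^{\alpha-1}}) \cong C_{p^{\alpha-2}} \times C_{p-1}$ has a unique subgroup of order $p$, and any non-trivial action would manufacture a second subgroup of order $p^{2}$ in $G$, contradicting uniqueness; hence $\phi$ is trivial, $G$ is abelian, and an abelian $p$-group with a unique subgroup of each prescribed order must be cyclic. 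For $p = 2$, $\Aut(C_{2^{\alpha-1}})$ contains additional involutions (notably $x \mapsto x^{-1}$ and $x \mapsto x^{1+2^{\alpha-2}}$); a case analysis on $\phi$ together with $y^{2} \in M$, using uniqueness of the subgroup of order $4$, singles out the generalized quaternion presentation $y x y^{-1} = x^{-1}$, $y^{2} = x^{2^{\alpha-2}}$, yielding $G \cong Q_{2^{\alpha}}$.

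The main obstacle lies in the $p=2$ analysis for part (b): the richer automorphism group of $C_{2^{\alpha-1}}$ admits several non-trivial actions (yielding the dihedral, semidihedral, modular maximal-cyclic, and generalized quaternion extensions), and one must verify by direct inspection of their subgroup lattices that among these only $Q_{2^{\alpha}}$ retains a unique subgroup of order $p^{\beta}$ for every $\beta \ge 2$. Once this verification is in hand, the stated dichotomy between the cyclic and generalized quaternion conclusions follows immediately.
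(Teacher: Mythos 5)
The paper itself offers no proof of this statement: it is quoted as a classical background result immediately before Section~2, so there is no argument of the authors' to compare yours against. Judged on its own terms, your proposal has two genuine gaps.

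In part (a), the reduction from all subgroups to normal subgroups via the conjugation action is fine, as is the bijection $N\mapsto N/\langle z\rangle$ for the normal subgroups containing the central $\langle z\rangle$. But the claim that the normal subgroups of order $p^{\beta}$ with $N\cap\langle z\rangle=\{e\}$ "can be paired with the former via $N\mapsto N\langle z\rangle$" is not a pairing at all: $N\langle z\rangle$ has order $p^{\beta+1}$, so it does not land in the set you are pairing against, and the map is far from injective. What you actually need is that the number of normal subgroups of order $p^{\beta}$ avoiding $z$ is divisible by $p$; this requires a further argument (for instance, fibering such $N$ over $M=N\langle z\rangle$ and observing that the index-$p$ subgroups of $M$ avoiding $z$ correspond to hyperplanes of $M/\Phi(M)$ missing $\bar z$, of which there are $p^{d-1}$ with $d\ge 2$, and then counting the $G$-invariant ones by another orbit argument). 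Without something of this kind the induction does not close.

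In part (b) the first step already derails the argument. If $G$ has a unique subgroup of order $p^{\alpha-1}$, then $G$ has a unique maximal subgroup $M$, and any $y\in G\setminus M$ lies in no maximal subgroup, so $\langle y\rangle=G$ and $G$ is cyclic on the spot. Your subsequent analysis of the conjugation action of $y$ on a cyclic $M$ is therefore superfluous, and — more seriously — the generalized quaternion conclusion can never be reached along this route, since $Q_{2^{\alpha}}$ has three maximal subgroups, not one. The quaternion case belongs to the $\beta=1$ hypothesis (a unique subgroup of order $p$), under which there is no unique maximal subgroup and the induction must be organized differently (the standard proofs go through the classification of $2$-groups with a cyclic subgroup of index $2$, or through counting solutions of $x^{p}=1$). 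In fairness, the statement as printed in the paper is itself garbled on exactly this point, but your write-up inherits the confusion rather than resolving it: as it stands the argument proves only the cyclic alternative and silently loses the case the theorem is named for.
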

\section{Perfect Graph}
A finite graph $\Gamma$ is called perfect if it has the property that every
induced subgraph has clique number equal to chromatic number.\\
The class of perfect graphs is closed under complementation (this is the
\emph{Weak Perfect Graph Theorem} of Lov\'asz, and contains
several further important graph classes, including bipartite graphs,
line graphs, chordal graphs, cographs etc. First we recall a theorem: 
\begin{theorem}\emph{(Strong Perfect Graph Theorem)}
\label{spgt}
A graph is perfect if and only if
it contains no odd hole or odd antihole as induced subgraph, where an
odd hole is an $n$-cycle with $n$ odd and $n\ge5$, and an
odd antihole is the complement of an odd hole.
\end{theorem}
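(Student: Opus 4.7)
The plan is to recognise this as the landmark Strong Perfect Graph Theorem of Chudnovsky--Robertson--Seymour--Thomas; a complete proof runs to roughly 180 pages, so I shall sketch only the overall strategy. The easy direction is immediate: for odd $n\geq 5$ the cycle $C_n$ has clique number $2$ but chromatic number $3$, while its complement has clique number $(n-1)/2$ but chromatic number $(n+1)/2$. Thus odd holes and odd antiholes are themselves imperfect; since perfection is hereditary, no perfect graph can contain either as an induced subgraph.

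The substantive direction is to show that every \emph{Berge graph}, i.e.\ a graph containing no odd hole and no odd antihole as induced subgraph, is perfect. My plan follows the structural approach: prove that every Berge graph $\Gamma$ either belongs to one of five ``basic'' perfect classes (bipartite graphs, line graphs of bipartite graphs, the complements of these two classes, and the so-called double split graphs) or else admits one of a short list of structural decompositions (a $2$-join, the complement of a $2$-join, or a balanced skew partition). Each basic class is shown directly to be perfect: bipartite graphs by K\"onig's theorem, line graphs of bipartite graphs by K\"onig's edge-colouring theorem, and double split graphs by a short case analysis. One then shows that each decomposition preserves perfection when reversed, so that induction on $|V(\Gamma)|$ closes the argument.

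The principal obstacle is the decomposition theorem itself. The hard case is to analyse Berge graphs containing long \emph{prisms} (even or odd), and a separate delicate argument is needed when no such prism appears. Showing that a minimum imperfect counterexample cannot admit a balanced skew partition is another subtle point, since skew partitions do not obviously interact well with optimal colourings. In the context of the present paper, however, we do not reproduce any of this machinery: Theorem~\ref{spgt} is invoked purely as a black box, used to reduce perfection of $\Gamma_S(G)$ to the nonexistence of induced odd holes and odd antiholes, which is exactly what the arguments in the subsequent sections verify.
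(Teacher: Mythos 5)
Your treatment matches the paper's: the authors state Theorem~\ref{spgt} and cite Chudnovsky--Robertson--Seymour--Thomas without reproducing a proof, using it purely as a black box, exactly as you do. Your sketch of the easy direction (imperfection of odd holes and antiholes via the clique/chromatic number gap) and of the decomposition strategy for the hard direction is accurate, so there is nothing to correct.
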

The above theorem is called \emph{Strong Perfect Graph Theorem}, a conjecture of Berge proved by Chudnovsky, Robertson, Seymour and
Thomas \cite{spgt}.\\
We use this theorem to identify the all nilpotent groups $G$ such that $\Gamma_{S}(G)$ is a perfect graph.
\begin{lemma}
\label{lm_cy}
Let $G\cong C_{p^a}\times Q$ (where, $Q$ is any finite non-cyclic $q$-group). Then, $\Gamma_{S}(G)$ is perfect.
\end{lemma}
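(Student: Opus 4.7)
The plan is to apply the Strong Perfect Graph Theorem (Theorem~\ref{spgt}) and verify that $\Gamma_S(G)$ contains neither an induced odd hole nor an induced odd antihole of length at least $5$. Since $p\ne q$, Goursat's lemma forces every subgroup of $G$ to have the form $H_1\times H_2$ with $H_1\le C_{p^a}$ and $H_2\le Q$; combined with the fact that the subgroup lattice of the cyclic $p$-group $C_{p^a}$ is a chain (so two of its non-trivial subgroups never meet trivially), the coordinate-wise intersection formula $(H_1\times H_2)\cap(K_1\times K_2)=(H_1\cap K_1)\times(H_2\cap K_2)$ pins down the adjacency rule: $H_1\times H_2\sim K_1\times K_2$ iff one of $H_1,K_1$ is trivial and $H_2\cap K_2=\{e\}$.

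I will partition the vertex set of $\Gamma_S(G)$ into
\[
A=\{\{e\}\times H_2\},\qquad B=\{H_1\times\{e\}\},\qquad C=\{H_1\times H_2:\,H_1,H_2\ne\{e\}\},
\]
and extract three structural features of the adjacency rule: $B\cup C$ is an independent set; every $B$-vertex is adjacent to every $A$-vertex, so the vertices of $B$ are pairwise false twins; and for each $c=H_1\times H_2\in C$ the full neighbourhood of $c$ in $\Gamma_S(G)$ equals $\{\{e\}\times K_2\in A:K_2\cap H_2=\{e\}\}$, i.e., the same set as the $A$-neighbourhood of its \emph{shadow} $\{e\}\times H_2\in A$. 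Consequently $\Gamma_S(G)[A\cup B]$ is the join of $\Gamma_S(G)[A]$ with the independent set $B$, and each $C$-vertex attaches to $A$ as a partial twin of its shadow.

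Suppose for contradiction that $\Gamma_S(G)$ carries an induced odd hole $v_1\cdots v_{2k+1}$ with $k\ge 2$. The false-twin property on $B$ rules out two $B$-vertices on the hole, since any such pair shares cycle neighbours and so forces a chord. For each $v_i\in C$ on the hole, a short case analysis shows that its shadow $a_i$ cannot itself lie on the hole, otherwise $a_i$ would be adjacent to the two cycle neighbours of $v_i$ (or to the unique $B$-vertex, if present) producing a chord; so $v_i$ may be replaced by $a_i$ without disturbing the cycle edges, and the only candidate for a new chord — an edge between two replaced shadows — is ruled out by the non-adjacency of the original $C$-vertices together with the chain lattice of $C_{p^a}$. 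After these replacements the hole lies in $A\cup B$, and since $B$ is completely joined to $A$ no $B$-vertex can appear on an odd hole of length $\ge 5$. Hence any such hole would already lie in $A$. The same argument applied to the complement of $\Gamma_S(G)$, where $B\cup C$ becomes a clique of size at most $2$ on any induced antihole of length $\ge 5$, disposes of induced odd antiholes.

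The main obstacle is therefore the concluding step: showing that the disjointness graph $\Gamma_S(G)[A]$ on the non-trivial subgroups of the non-cyclic $q$-group $Q$ itself admits no induced odd hole and no induced odd antihole of length $\ge 5$. For this I would invoke Kulakoff's theorem (Theorem~1.1) to reduce trivial-intersection questions about two non-trivial subgroups of $Q$ to the configuration of their minimal subgroups of order $q$, and then handle the resulting disjointness problem on these atoms inside the elementary abelian socle $\Omega_1(Q)$ by direct combinatorial and linear-algebra analysis producing the proper colouring that matches the clique number; this is the step in which the hypothesis that $Q$ is a $q$-group, rather than an arbitrary finite group, is used essentially.
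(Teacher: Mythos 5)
Your structural setup is correct and in fact cleaner than the paper's: the coprime-order decomposition $H=H_1\times H_2$, the resulting adjacency rule, and the partition into $A$, $B$, $C$ are all right, and the observation that two $B$-vertices cannot both lie on an induced odd hole of length at least $5$ is sound. The gap is in the shadow-replacement step. You claim that the only possible new chord after replacing each $C$-vertex $H_1\times H_2$ by its shadow $\{e\}\times H_2$ is one between two replaced shadows, and that such a chord is ``ruled out by the non-adjacency of the original $C$-vertices together with the chain lattice of $C_{p^a}$.'' This is a non-sequitur: two $C$-vertices are non-adjacent \emph{automatically}, because their $p$-parts are non-trivial subgroups of the chain $C_{p^a}$ and hence meet; their non-adjacency carries no information about whether their $q$-parts intersect trivially, which is exactly what governs adjacency of the two shadows. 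So a hole containing two $C$-vertices at distance two whose $q$-parts happen to be disjoint survives your reduction only as a chorded $5$-cycle inside $A$, and the contradiction evaporates. (Your concluding step, the perfection of the disjointness graph on the subgroups of $Q$, is also only gestured at rather than proved, but that is secondary.)

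The gap cannot be repaired, because the lemma is false. Take $Q=(C_q)^4$ with basis $e_1,e_2,e_3,e_4$, let $G=C_p\times Q$, and set $v_1=C_p\times\langle e_1\rangle$, $v_2=\{e\}\times\langle e_3,e_4\rangle$, $v_3=C_p\times\langle e_2\rangle$, $v_4=\{e\}\times\langle e_1,e_3\rangle$, $v_5=\{e\}\times\langle e_2,e_4\rangle$. Each consecutive pair intersects trivially, while $v_1\cap v_3\supseteq C_p\times\{e\}$, $v_1\cap v_4\supseteq\langle e_1\rangle$, $v_2\cap v_4\supseteq\langle e_3\rangle$, $v_2\cap v_5\supseteq\langle e_4\rangle$, $v_3\cap v_5\supseteq\langle e_2\rangle$; hence $(v_1,\dots,v_5)$ is an induced $5$-cycle, i.e.\ an odd hole, in $\Gamma_S(G)$. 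This is exactly the configuration your replacement mishandles: the five $q$-parts form a $5$-cycle in $Q$ carrying the single chord $\langle e_1\rangle$--$\langle e_2\rangle$, and multiplying the two endpoints of that chord by $C_p$ deletes the chord without disturbing the other eight pairs. For what it is worth, the paper's own proof founders on the same configuration: its opening ``without loss of generality, let $o(A)$ be a power of $p$'' silently assumes some vertex of the $5$-cycle is a $p$-group, which fails here since every $v_i$ has order divisible by $q$.
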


 \begin{proof}
First, we show that $\Gamma_{S}(G)$ does not contain any $5$-cycle. Suppose, $(A, B, C, D, E, A)$ is a $5$-cycle. Without loss of generality, let $o(A)$ is power of $p$. Then, $o(B), o(D)$ must be power of $q$ such that $B\cap D$ is non-trivial. Since, $C\nsim E$ and $C\nsim A$, therefore $o(C)$ must be divisible by $pq$. Similarly, $pq$ divides $o(D)$. As the Sylow $p$-subgroup of $G$ is cyclic, therefore $C$ and $D$ has common intersection, which is at least $C_{p}$. This implies, $C$ and $D$ can not be adjacent. Hence such $5$-cycle does not exist in $\Gamma_{S}(G)$. Next, we consider the higher cycle say $A_{1}, A_{2}, \cdots, A_{2n}, A_{2n+1}, A_{1}$. Without loss of generality, let $o(A_{1})$ is power of $p$. So, $o(A_{2})$ and $o(A_{2n+1})$ are power of $q$. Then $p$ divides $o(A_{3})$ and $o(A_{2n})$ as well. Now, as $A_{1}\nsim A_{4}, A_{2n-1}$ so $o(A_{4}), o(A_{2n-1})$ must be divisible by $p$, which contradicts that $A_{3}\sim A_{4}, A_{2n}\sim A_{2n-1}$. So, such cycle does not exist. Similarly, for the other possibilities of $o(A_{1})$ we also get that $(n+1)-$th and $(n+2)-$th vertices are not adjacent. This happens because the Sylow $p$-subgroup is cyclic and $Q$ is finitely generated.\\
Therefore, $\Gamma_{S}(G)$ does not contain an odd cycle of length $5$ and more. Using the similar procedure we can show that the complement of $\Gamma_{S}(G)$ can never contains any odd cycle of length $5$ and above. Thus, $\Gamma_{S}(G)$ is perfect [by Theorem \ref{spgt}].
\end{proof}

\begin{lemma}
\label{lm_gen_1}
Let $G$ be any one of the groups: $Q_{2^a}\times C_{q^b}$ or $Q_{2^a}\times C_{q^{b}r^{c}}$ or $Q_{2^a}\times C_{q^{b}r^{c}s^{d}}$, where $a\geq 3$, $b, c, d \geq 1$ and $q, r$ are odd primes. Then $\Gamma_{S}(G)$ is not perfect.
\end{lemma}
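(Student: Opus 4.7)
The plan is to apply the Strong Perfect Graph Theorem (Theorem~\ref{spgt}), which says that a graph is not perfect iff it contains an induced odd hole (cycle of length $\ge 5$) or an induced odd antihole. So the task reduces to exhibiting one such configuration inside $\Gamma_{S}(G)$. By the subgroup-closed property of $\Gamma_{S}$ recalled in the introduction, it suffices to find such a configuration in $\Gamma_{S}(H)$ for any subgroup $H\le G$. In each of the three cases I will work inside a minimal-exponent subgroup, namely $H=Q_{8}\times C_{q}$, $H=Q_{8}\times C_{q}\times C_{r}$, or $H=Q_{8}\times C_{q}\times C_{r}\times C_{s}$ respectively; this is legitimate because $Q_{8}\le Q_{2^{a}}$ for $a\ge 3$ and $C_{p}\le C_{p^{e}}$.

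Inside each such $H$, the Sylow orders are pairwise coprime, so Goursat's lemma collapses and every subgroup is a direct product $A\times B$ with $A\le Q_{8}$ and $B$ a subgroup of the cyclic factor. Two such products $A_{1}\times B_{1}$ and $A_{2}\times B_{2}$ are adjacent in $\Gamma_{S}$ iff both $A_{1}\cap A_{2}=\{e\}$ and $B_{1}\cap B_{2}=\{e\}$. The key structural fact driving the construction is that $Q_{8}$ has a unique involution, so every non-trivial subgroup of $Q_{8}$ contains this central element; consequently, any two subgroups whose $Q_{8}$-parts are both non-trivial have $\{e\}\ne A_1\cap A_2$ and are automatically non-adjacent. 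Using this, I aim to write down a collection of subgroups realising an induced odd cycle, where the required non-adjacencies are supplied either by the shared central involution on the $Q_{8}$-side or by overlap of the cyclic components, and the cycle edges come from pairs in which at least one factor is trivial.

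The main obstacle will be avoiding chords in the candidate hole. The subgroups of the form $A\times\{e\}$ (with $A\ne\{e\}$) are adjacent in $\Gamma_{S}$ to every non-trivial $\{e\}\times B$, and symmetrically; therefore, placing an $A\times\{e\}$ at a vertex of the hole forces both of its diagonal non-neighbours to have non-trivial $Q_{8}$-projection---but all such subgroups form an independent set in $\Gamma_{S}$, which would prohibit the mandatory cycle edge between them. Navigating this obstruction is the reason one must use the richer cyclic structure (the extra primes in cases two and three, or the chain $C_{q}\subsetneq C_{q^{2}}\subsetneq\cdots\subsetneq C_{q^{b}}$ in case one) to supply enough intermediate vertices of "mixed" type $A\times B$ with $A,B\ne\{e\}$ to close an odd cycle without creating diagonal edges. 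Once the induced odd hole (or antihole) is assembled in $\Gamma_{S}(H)$ and its chord-freeness is verified by a direct intersection calculation, the subgroup-closed property lifts it to $\Gamma_{S}(G)$, and Theorem~\ref{spgt} yields the claimed non-perfectness.
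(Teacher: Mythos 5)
There is a fundamental problem: you are proving the wrong direction. The word ``not'' in the statement of Lemma~\ref{lm_gen_1} is a typographical error. The paper's own proof of this lemma ends with ``Hence, $\Gamma_{S}(G)$ is perfect,'' and Theorem~\ref{th_nil_perfect} lists all three families $Q_{2^a}\times C_{q^b}$, $Q_{2^a}\times C_{q^{b}r^{c}}$ and $Q_{2^a}\times C_{q^{b}r^{c}s^{d}}$ among the groups whose intersection subgroup graph \emph{is} perfect, citing this very lemma in its converse part. So the configuration you set out to build does not exist, and no amount of ``richer cyclic structure'' will produce it. The clearest illustration is the first family: in $Q_{2^a}\times C_{q^b}$ every non-trivial subgroup of $Q_{2^a}$ contains the unique involution and every non-trivial subgroup of $C_{q^b}$ contains the unique $C_q$, so the only vertices of $\Gamma_S(G)$ are the subgroups $A\times\{e\}$ and $\{e\}\times B$, and the graph is complete bipartite --- perfect outright, with no odd hole or antihole anywhere.

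Your own second paragraph in fact contains the germ of the correct (positive) proof. The observation that the subgroups with non-trivial $Q_{2^a}$-component form an independent set is not an obstacle to be ``navigated''; combined with the analogous coincidence structure on the cyclic factor it forces a chord in any candidate odd cycle of length at least $5$. Concretely, in a putative induced $C_5$ in $\Gamma_S(Q_8\times C_{qr})$ at most two vertices can have non-trivial $Q_8$-part, so three vertices are of the form $\{e\}\times B_i$; two of these must be consecutive on the cycle, forcing $\{B_2,B_3\}=\{C_q,C_r\}$ and hence $B_1=C_{qr}$, whereupon the cycle-neighbour of $\{e\}\times B_1$ with non-trivial $Q_8$-part must have trivial cyclic part and therefore meets $\{e\}\times B_2$ trivially --- a forbidden chord. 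This is exactly the style of argument the paper runs (for all odd holes and antiholes). Finally, note that even read on its own terms your proposal is incomplete: it is a plan that acknowledges the decisive obstruction but never exhibits a single concrete induced odd hole or antihole, so nothing is actually proved.
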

\begin{proof}
Let $G\cong Q_{2^a}\times C_{q^b}$. Let $A_{1}, A_{2}, \cdots , A_{2n+1}, A_{1}$ ($n\geq 2$) be a cycle in $\Gamma_{S}(G)$. Clearly, any two subgroups of order power of $2$ must be non-adjacent as they share a non-trivial intersection. Without loss of generality, let $o(A_{1})$ is power of $q$. Then, $o(A_{2}), o(A_{2n+1})$ are power of $2$. So, $q$ divides $o(A_{3})$ and $o(A_{2n})$. But as $A_{4}, A_{2n-1}\nsim A_{1}, A_{2}$ so $o(A_{4}), o(A_{2n-1})$ are divisible by $2q$. This leads to a contradiction that $A_{3}\sim A_{4}$ and $A_{2n}\sim A_{2n-1}$. So, such odd cycle does not exist in $\Gamma_{S}(G)$. Similarly, it can be shown that the complement of $\Gamma_{S}(G)$ does not contain $C_{2n+1}$( for $n\geq 2$). Hence, $\Gamma_{S}(G)$ is perfect.\\
Using a similar strategy, we can demonstrate that $\Gamma_{S}(G)$ is a perfect graph if $G$ is any one of the groups $Q_{2^a}\times C_{q^{b}r^{c}}$ or $Q_{2^a}\times C_{q^{b}r^{c}s^{d}}$, where $a\geq 3$ and $b, c, d \geq 1$. 
\end{proof}

\begin{lemma}
\label{lm_gen_2}
$\Gamma_{S}(Q_{2^a}\times K)$ is perfect, where $K$ is any finite non-cyclic $q$-group for an odd prime $q$.
\end{lemma}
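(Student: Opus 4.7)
The plan is to apply the Strong Perfect Graph Theorem (Theorem \ref{spgt}) by ruling out induced odd holes and odd antiholes of length $\geq 5$ in $\Gamma_S(G)$, where $G = Q_{2^a}\times K$. The setup is that, since $\gcd(|Q_{2^a}|,|K|) = 1$, every subgroup of $G$ factors uniquely as $X\times Y$ with $X\leq Q_{2^a}$, $Y\leq K$, and intersections distribute componentwise as $(X_1\times Y_1)\cap(X_2\times Y_2) = (X_1\cap X_2)\times(Y_1\cap Y_2)$. The decisive property of $Q_{2^a}$ is that $Z(Q_{2^a})\cong C_2$ is its unique minimal subgroup, so any two non-trivial subgroups of $Q_{2^a}$ meet non-trivially. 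Consequently, at every edge of $\Gamma_S(G)$ at least one endpoint has trivial $Q_{2^a}$-part; this is exactly the analogue, for $Q_{2^a}$, of the cyclic-Sylow observation driving the proof of Lemma \ref{lm_cy}.

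With this in hand, I classify each vertex $A = X\times Y$ as pure-$2$ ($Y$ trivial), pure-$q$ ($X$ trivial), or type-mixed (both non-trivial), and suppose for contradiction that $(A_1,A_2,\ldots,A_{2n+1},A_1)$ is an induced odd cycle with $n\geq 2$. Adapting Lemma \ref{lm_cy}, I take $A_1$ pure-$2$; adjacency then forces $A_2,A_{2n+1}$ to be pure-$q$, non-adjacency with $A_1$ forces $X_3,X_{2n}\neq\{e\}$, and the further non-adjacency of $A_4,A_{2n-1}$ with the pure-$q$ vertices $A_2,A_{2n+1}$ forces $Y_4,Y_{2n-1}\neq\{e\}$. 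Hence $X_3$ and $X_4$ are both non-trivial in $Q_{2^a}$, so $X_3\cap X_4\supseteq Z(Q_{2^a})$ is non-trivial and $A_3\cap A_4\neq\{e\}$, contradicting the cycle edge $A_3A_4$. The case where the chosen base vertex is pure-$q$ is handled symmetrically, and a parallel analysis on $\overline{\Gamma_S(G)}$ (using that the non-trivial $Q_{2^a}$-subgroups form an anticlique of $\Gamma_S(G)$, hence a clique in the complement) kills odd antiholes of length $\geq 5$. An appeal to Theorem \ref{spgt} would then finish the proof.

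The main obstacle is the case where the putative odd cycle contains no pure-$2$ vertex, so that the contradiction cannot be anchored to $Q_{2^a}$ via the argument above. In that regime only pure-$q$ and type-mixed vertices appear, and the argument must use that type-mixed vertices pairwise share $Z(Q_{2^a})$ and hence form an independent set of size at most $n$ inside the $(2n+1)$-cycle, leaving at least $n+1$ pure-$q$ positions whose $Y$-parts satisfy a tight intersection pattern inside $K$. Closing this case requires a careful analysis of the subgroup lattice of the non-cyclic $q$-group $K$, using the componentwise intersection formula together with the Kulakoff-type abundance of minimal subgroups in $K$ to force a pair of supposedly adjacent cycle vertices to share a common subgroup of $K$; this is the delicate step I expect to demand the most work.
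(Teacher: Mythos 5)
Your first half is sound and is exactly the paper's intended argument: a vertex of the hole with trivial $K$-part is adjacent to every pure-$q$ vertex and non-adjacent to every subgroup containing the central involution of $Q_{2^a}$, so all $2n-2\ge 2$ of its non-neighbours on the hole lie in the independent set of subgroups with non-trivial $2$-part, which is impossible since two of those non-neighbours are consecutive on the hole. But the case you flag as the ``main obstacle'' is not merely delicate --- it cannot be closed, because the lemma as stated is false. An odd hole can be supported entirely on subgroups of $\{e\}\times K$, where adjacency reduces to trivial intersection inside $K$, and $\Gamma_S(K)$ need not be perfect for a non-cyclic $q$-group. Take $K=(C_q)^5=\mathbb{F}_q^{5}$ with basis $e_1,\dots,e_5$ and set $U_1=\langle e_1+e_4,\,e_5\rangle$, $U_2=\langle e_1,e_2,e_3\rangle$, $U_3=\langle e_1+e_4,\,e_2+e_5\rangle$, $U_4=\langle e_3,e_4,e_5\rangle$, $U_5=\langle e_1,\,e_2+e_5\rangle$. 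Checking the ten pairs gives $U_i\cap U_{i+1}=0$ (indices mod $5$), while $U_1\cap U_3\ni e_1+e_4$, $U_1\cap U_4\ni e_5$, $U_2\cap U_4\ni e_3$, $U_2\cap U_5\ni e_1$, $U_3\cap U_5\ni e_2+e_5$. So these five subgroups induce a $C_5$, an odd hole, in $\Gamma_S(K)$, hence (by subgroup-closedness) in $\Gamma_S(Q_{2^a}\times K)$ for any odd prime $q$.

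For comparison, the paper's own proof of Lemma~\ref{lm_gen_2} is a one-sentence deferral to Lemma~\ref{lm_gen_1}, and it breaks at precisely the point you isolate: the step there deducing that the neighbours of a $q$-power-order vertex must have $2$-power order uses that the Sylow $q$-subgroup $C_{q^b}$ is cyclic, so that any two subgroups of order divisible by $q$ share its unique subgroup of order $q$ --- exactly the hypothesis dropped in Lemma~\ref{lm_gen_2}. Your proposed reduction to the subgroup lattice of $K$ is therefore not optional bookkeeping; it is where the entire content of the lemma sits, and the answer is negative once $K$ has rank at least $5$. (The same example contradicts Lemmas~\ref{lm_cy} and~\ref{lm_pq} and the corresponding cases of Theorem~\ref{th_nil_perfect}; the statement could only survive under an added restriction on $K$, e.g.\ bounded rank, that excludes configurations like the one above.)
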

\begin{proof}
Note that, as $K$ is a finite $q$-group so it is finitely generated. The proof is as similar as Lemma \ref{lm_gen_1}.
\end{proof}

\begin{lemma}
\label{lm_gen_3}
$\Gamma_{S}(Q_{2^a}\times C_{q^b}\times R)$ is perfect, where $R$ is a non-cyclic (finite) group of order power of $r$ and $q, r$ are odd primes.
\end{lemma}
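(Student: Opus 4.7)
My plan is to apply the Strong Perfect Graph Theorem (Theorem~\ref{spgt}), so the task reduces to showing that neither $\Gamma_S(G)$ nor its complement contains an induced odd cycle of length at least $5$. The proof is designed to parallel Lemmas~\ref{lm_cy}, \ref{lm_gen_1} and \ref{lm_gen_2}.

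Since $|Q_{2^a}|$, $|C_{q^b}|$ and $|R|$ are pairwise coprime, every subgroup $H\le G$ decomposes uniquely as $H=H_2\times H_q\times H_r$ with $H_2\le Q_{2^a}$, $H_q\le C_{q^b}$, $H_r\le R$; consequently $H\cap K=\{e\}$ in $G$ if and only if $H_p\cap K_p=\{e\}$ for each $p\in\{2,q,r\}$. The two usable structural facts are that $Q_{2^a}$ has a unique minimal subgroup (its centre) and $C_{q^b}$ is cyclic, so any two non-trivial $2$-subgroups (respectively $q$-subgroups) of $G$ necessarily intersect non-trivially.

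Suppose toward contradiction that $A_1,A_2,\ldots,A_{2n+1},A_1$ ($n\ge2$) is an induced odd cycle in $\Gamma_S(G)$. The adjacency of consecutive vertices forces $\{i:2\mid|A_i|\}$ and $\{i:q\mid|A_i|\}$ to be independent sets in $C_{2n+1}$. One then attempts to relabel so that some vertex $A_1$ has prime-power order, say a power of $p$. When $p\in\{2,q\}$ the argument of Lemma~\ref{lm_gen_1} applies almost verbatim: non-adjacency of $A_1$ with $A_3$ and $A_4$ forces $p\mid|A_3|$ and $p\mid|A_4|$, so $(A_3)_p$ and $(A_4)_p$ both contain the unique minimal $p$-subgroup of the relevant Sylow subgroup, contradicting $A_3\sim A_4$. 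Applying the same scheme to the complementary graph then eliminates odd antiholes, and Theorem~\ref{spgt} delivers perfectness.

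The principal obstacle lies in controlling vertices whose order involves the prime $r$, since $R$ is non-cyclic and two non-trivial $r$-subgroups can meet trivially. In particular, the above reduction does not immediately close the case when no vertex of the cycle is pure $2$- or $q$-power. To handle this, I would use that $A_1\cap A_3$ and $A_1\cap A_4$ are non-trivial subgroups of $A_1\le R$ which by $A_3\sim A_4$ have trivial mutual intersection, then invoke Kulakoff's theorem together with the finite generation of $R$ and the subgroup-closed property of $\Gamma_S$ to refine $A_1$ to a suitable subgroup of prime order $r$; once $A_1$ has order $r$, the forced containments $(A_3)_r\supseteq A_1$ and $(A_4)_r\supseteq A_1$ yield $(A_3)_r\cap(A_4)_r\ne\{e\}$, producing the desired contradiction and completing the argument.
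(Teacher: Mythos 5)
Your setup is sound and follows the strategy the paper itself only gestures at (its ``proof'' of this lemma is a one-line reference to Lemmas~\ref{lm_cy} and \ref{lm_gen_1}): the coprime decomposition $H=H_2\times H_q\times H_r$ with componentwise intersections, and the fact that any two non-trivial $2$-parts (resp.\ $q$-parts) meet because $Q_{2^a}$ and $C_{q^b}$ each have a unique minimal subgroup, are both correct. The fatal gap is exactly the step you flag as the ``principal obstacle'' and then try to patch. When $A_1$ is a non-cyclic pure $r$-subgroup, the subgroups $A_1\cap (A_3)_r$ and $A_1\cap (A_4)_r$ can perfectly well be non-trivial while meeting each other trivially, so there is no contradiction to extract; and you cannot ``refine $A_1$ to a subgroup of prime order $r$'', because an order-$r$ subgroup $A_1'\le A_1$ will in general intersect one of $(A_3)_r$, $(A_4)_r$ trivially, so the non-adjacencies $A_1\nsim A_3$ and $A_1\nsim A_4$ are not inherited by $A_1'$ and the alleged odd hole is destroyed rather than refuted. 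Kulakoff's theorem and subgroup-closedness do not supply the missing containment.

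In fact no repair is possible, because the lemma as stated is false once $R$ contains $(C_r)^3$. Take $G=Q_{2^a}\times C_{q^b}\times (C_r)^3$, let $Z$ be the unique subgroup of order $2$ of $Q_{2^a}$, let $C$ be the subgroup of order $q$ of $C_{q^b}$, let $R_1,R_2,R_3$ be the three coordinate subgroups of order $r$ in $(C_r)^3$, and set $W=R_1R_2$. With trivial components suppressed, the five subgroups $A_1=W$, $A_2=Z\times R_3$, $A_3=C\times R_1$, $A_4=Z\times R_2$, $A_5=C\times R_3$ form an induced $5$-cycle in $\Gamma_{S}(G)$: each consecutive pair intersects trivially, while $A_1\cap A_3\supseteq R_1$, $A_1\cap A_4\supseteq R_2$, $A_2\cap A_4\supseteq Z$, $A_2\cap A_5\supseteq R_3$ and $A_3\cap A_5\supseteq C$. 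Hence $\Gamma_{S}(G)$ contains an odd hole and is not perfect, contradicting the lemma (and the corresponding clauses of Theorem~\ref{th_nil_perfect}, including the case $C_{2^{a}q^{b}}\times R$, which succumbs to the same configuration with $Z$ taken inside $C_{2^a}$). The two-prime counting argument of Lemmas~\ref{lm_cy} and \ref{lm_gen_1} genuinely does not extend here: with three primes there are enough independent ``reasons for non-adjacency'' ($Z$, $C$, and the geometry of $(C_r)^3$) to close a pentagon. The statement can only hold under a stronger hypothesis on $R$, so the correct response to this exercise is a counterexample, not a proof.
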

\begin{proof}
The strategy of the proof is similar as Lemma \ref{lm_cy} and Lemma \ref{lm_gen_1}.
\end{proof}

\begin{lemma}
\label{lm_pq}
Let $G$ be the group $P\times Q$, where $a, b \geq 1$ and $P, Q$ be two finite non-cyclic $p$-group and $q$-group respectively for odd primes $p, q$. Then $\Gamma_{S}(G)$ is perfect.
\end{lemma}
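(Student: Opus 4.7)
The plan is to invoke the Strong Perfect Graph Theorem (Theorem~\ref{spgt}): it suffices to show that $\Gamma_S(G)$ contains no induced odd cycle of length $\geq 5$ and no induced odd antihole. Before any case analysis, I would record the fundamental decomposition: since $\gcd(|P|,|Q|)=1$, every subgroup $A$ of $G=P\times Q$ is uniquely of the form $A=L\times M$ with $L\leq P$, $M\leq Q$, and $A\cap A'=(L\cap L')\times (M\cap M')$. Consequently, $A\sim A'$ in $\Gamma_S(G)$ iff $L\cap L'=\{e\}$ and $M\cap M'=\{e\}$, and non-adjacency means at least one of these two intersections is non-trivial.

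Next I would suppose, toward a contradiction, an induced odd cycle $A_1,\ldots,A_{2k+1},A_1$ with $k\geq 2$ and write $A_i=L_i\times M_i$. Each $A_i$ falls into one of three classes: type P if $M_i=\{e\}$, type Q if $L_i=\{e\}$, or type B if both are non-trivial. If some $A_i$ is pure, say $A_1$ is of type P (the pure-Q case is symmetric), then for every other vertex $A_j$ the $Q$-coordinate contributes nothing to the intersection with $A_1$, so $A_j\sim A_1$ is equivalent to $L_j\cap L_1=\{e\}$ and $A_j\not\sim A_1$ is equivalent to $L_j\cap L_1\ne\{e\}$. Propagating this alternation of $L$-disjoint and $L$-meeting constraints along the odd cycle exactly as in Lemma~\ref{lm_cy} gives a contradiction with some required consecutive edge.

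The main obstacle is the remaining case, in which every $A_i$ is of mixed type B. Here the nested-Sylow trick of Lemma~\ref{lm_cy} is unavailable, since both Sylow subgroups of $G$ are non-cyclic. To handle it, I would label each non-consecutive index pair $(i,j)$ of the cycle by a ``$P$-obstruction'' if $L_i\cap L_j\neq\{e\}$, and by a ``$Q$-obstruction'' if $M_i\cap M_j\neq\{e\}$ (a pair may carry both labels, but at least one is present by non-adjacency). A combinatorial analysis of this two-labelling on the odd-cycle structure---exploiting the oddness of the cycle, the finiteness of $P$ and $Q$, and the fact that consecutive pairs carry no obstruction on either coordinate---forces some consecutive pair $L_i, L_{i+1}$ or $M_i, M_{i+1}$ to intersect non-trivially, contradicting the adjacency $A_i\sim A_{i+1}$.

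An entirely parallel case analysis rules out induced odd antiholes of length $\geq 7$ (the length-$5$ antihole coincides with $C_5$ and is already excluded). With both obstructions eliminated, Theorem~\ref{spgt} yields that $\Gamma_S(G)$ is perfect.
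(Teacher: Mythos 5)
Your overall strategy (Strong Perfect Graph Theorem plus the coprime decomposition $A=L\times M$ with adjacency equivalent to $L\cap L'=\{e\}$ and $M\cap M'=\{e\}$) is the same skeleton the paper uses, but both of your key steps are asserted rather than proved, and the assertions are where the argument breaks. In the pure-vertex case you claim the constraints can be propagated ``exactly as in Lemma~\ref{lm_cy}''; that propagation worked there only because one Sylow subgroup was cyclic, so any two subgroups whose orders are both divisible by $p$ automatically share a $C_p$. Here neither Sylow subgroup is cyclic, so a pure vertex $A_1=L_1\times\{e\}$ only constrains the $L$-coordinates of the other vertices, and those constraints can all be met simultaneously. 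In the all-mixed case, the ``combinatorial analysis of the two-labelling'' is precisely the content of the lemma, and you give no argument for why the labelling of the five (or $2k+1$) non-edges by $P$- and $Q$-obstructions must force an obstruction onto a consecutive pair.

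The gap cannot be filled, because the statement as written is false. Take $P=C_p\times C_p$ with two distinct order-$p$ subgroups $L_1,L_2$, and $Q=\langle e_1\rangle\times\langle e_2\rangle\times\langle e_3\rangle\cong (C_q)^3$. Put $A_1=L_1\times\langle e_3\rangle$, $A_2=L_2\times\langle e_1\rangle$, $A_3=L_1\times\langle e_2\rangle$, $A_4=L_2\times\langle e_3\rangle$, $A_5=\{e\}\times\langle e_1,e_2\rangle$. Using $A\cap A'=(L\cap L')\times(M\cap M')$ one checks directly that the pairs $A_1A_2$, $A_2A_3$, $A_3A_4$, $A_4A_5$, $A_5A_1$ intersect trivially, while $A_1A_3$ share $L_1$, $A_2A_4$ share $L_2$, $A_1A_4$ share $\langle e_3\rangle$, $A_2A_5$ share $\langle e_1\rangle$, and $A_3A_5$ share $\langle e_2\rangle$. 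This is an induced $C_5$ in $\Gamma_S(P\times Q)$, so the graph is not perfect. (Note that this example contains the pure vertex $A_5$, so it simultaneously refutes your claim that a pure vertex forces a contradiction.) The paper's own proof is also defective here: it silently restricts to the case where both $P$ and $Q$ are $2$-generated and then argues only for one arbitrarily chosen shape of $A_1$; the counterexample shows the lemma needs at least an additional hypothesis (e.g., bounding the number of generators), and any correct version must confront the mixed-obstruction labelling you identified but did not resolve.
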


\begin{proof}
Since, $p, q$ both odd and $P, Q$ are finite non-cyclic $p$ and $q$ groups so both $P, Q$ are at least $2$-generated. Let both $P, Q$ be $2$-generated and $P_{1}, P_{2}\in P$, $Q_{1}, Q_{2}\in Q$ be the distinct $p$-subgroups and $q$-subgroups of $G$. Suppose, $\Gamma_{S}(G)$ contains an odd cycle $A_{1}, A_{2},\cdots, A_{2n+1}, A_{1}$, where $n \geq 2$. Without loss of generality, let $A_{1}\cong P_{2}\times Q_{1}$. Then, $A_{2}\cong P_{2}\times Q_{2}, A_{2n+1}\cong P_{2}\times \{e\}$. So, $A_{2n-1}\cong P_{1}\times Q_{2}$ and $A_{4}\cong P_{2}\times Q_{1}$; but this contradicts $A_{3}, A_{4}$ are adjacent. Similarly for the other possiblities we will reach a contradiction. Therefore, $\Gamma_{S}(G)$ does not contain any cycle of length $2n+1$( for $n \geq 2$).\\
Similarly, one can prove that the complement of $\Gamma_{S}(G)$ does not contain any cycle of length $5$ and above.\\
This proves that $\Gamma_{S}(G)$ is perfect according to the Theorem \ref{spgt}.
\end{proof}

\begin{theorem}
\label{th_nil_perfect}
Let $G$ be a finite nilpotent group which is not a $p$-group. Then $\Gamma_{S}(G)$ is a perfect graph if and only if the following conditions hold:\\
a) $|\pi(G)|=4$ and $G\cong C_{p^{a}q^{b}r^{c}s^{d}}$, where $a, b, c, d \geq 1$, or $Q_{2^a}\times C_{q^{b}r^{c}s^{d}}$( where, $a \geq 3$ and $b, c, d \geq 1$);\\ 
b) $|\pi(G)|=3$ and $G$ is either a cyclic group or $Q_{2^a}\times C_{q^{b}r^{c}}$ ($a\geq 3$ and $b,c \geq 1$) or $C_{2^{a}q^{b}}\times R$ ($a, b\geq 1$) or $Q_{2^a}\times C_{q^b}\times R$ ($a \geq 3$), where $R$ is a non-cyclic (finite)group of order power of $r$ ;\\
c) $|\pi(G)|=2$ and $G$ is any one of the group  $C_{p^{a}q^{b}}$ or $C_{p^a}\times Q$ or $P\times C_{q^b}$ or $P\times Q$, where $a, b \geq 1$ and $P, Q$ are finite non-cyclic $p$-group and $q$-group respectively ($p, q$ are odd primes), or $Q_{2^a}\times C_{q^b}$ or $Q_{2^a}\times K$ such that $a\geq 3, b\geq 1$ and $K$ is a non-cyclic (finite) group of order power of $q$.
\end{theorem}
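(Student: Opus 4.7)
The plan is to prove the biconditional in two directions. For sufficiency the five preceding lemmas already cover every listed family in which some Sylow subgroup is non-cyclic; the only case they leave untouched is when $G$ is cyclic, i.e.\ $G\cong C_{p_1^{a_1}\cdots p_k^{a_k}}$ with $k\in\{2,3,4\}$. In this cyclic case every Sylow subgroup of $G$ is cyclic, so two subgroups of $G$ intersect trivially iff their orders are coprime; the argument used to rule out odd cycles of length $\ge 5$ in the proof of Lemma~\ref{lm_cy} then extends verbatim, and applying the Strong Perfect Graph Theorem (Theorem~\ref{spgt}) to both $\Gamma_S(G)$ and its complement finishes this direction.

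For the converse I argue by contrapositive: if $G$ is nilpotent, not a $p$-group, and does not appear in any of (a)--(c), then I exhibit an induced $C_5$ in $\Gamma_S(G)$, which by Theorem~\ref{spgt} prevents it from being perfect. The cleanest sub-case is $|\pi(G)|\ge 5$. Pick five distinct prime divisors $p_1,\ldots,p_5$ of $|G|$ with Sylow subgroups $P_1,\ldots,P_5$, and set $A_i := P_i\,P_{i+2}$ (indices mod $5$, each viewed inside the direct product decomposition $G=P_1\times\cdots\times P_k$ with trivial coordinates in the remaining factors). Since $\gcd(|A_i|,|A_{i+1}|)=1$ the pair $A_i,A_{i+1}$ is adjacent in $\Gamma_S(G)$, while $P_{i+2}\subseteq A_i\cap A_{i+2}$ shows $A_i,A_{i+2}$ are non-adjacent; hence $\{A_1,\ldots,A_5\}$ induces a $5$-cycle.

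When $|\pi(G)|\in\{2,3,4\}$ the excluded nilpotent groups are exactly those in which some Sylow $p$-subgroup is non-cyclic and, if $p=2$, is not generalized quaternion. By part (b) of the Kulakoff Theorem such a Sylow subgroup contains at least two distinct minimal subgroups $P^{(1)},P^{(2)}$ of order $p$, with $P^{(1)}\cap P^{(2)}=\{e\}$. Combining $P^{(1)},P^{(2)}$ with well-chosen non-trivial subgroups drawn from the remaining Sylow factors --- exploiting either an extra prime divisor in $|G|$ or the non-cyclic structure of another Sylow factor to produce the ``distance-two'' intersections --- I build an explicit induced $5$-cycle in $\Gamma_S(G)$ in each case. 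The main obstacle is precisely this case analysis: one must treat $|\pi(G)|=2,3,4$ separately and, within each value, cover all the ways in which the Sylow structure can violate the conditions of (a)--(c), each time checking that the five proposed subgroups really form an \emph{induced} $C_5$ rather than a larger subgraph. Once this is done, the subgroup-closure property of $\Gamma_S$ established in~\cite{anderson} lets us transport the $C_5$ into any super-group with the same Sylow anomaly, which streamlines the bookkeeping and completes the proof.
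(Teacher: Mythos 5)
Your sufficiency direction is essentially the paper's: the five lemmas cover the non-cyclic families, and the cyclic case is handled by the coprimality criterion plus the Strong Perfect Graph Theorem, exactly as in the paper's converse part. Your induced $C_5$ for $|\pi(G)|\ge 5$ (with $A_i=P_iP_{i+2}$) is correct and is the same construction the paper uses in a different labelling.

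The genuine gap is in your necessity argument for $|\pi(G)|\in\{2,3,4\}$. Your claim that ``the excluded nilpotent groups are exactly those in which some Sylow $p$-subgroup is non-cyclic and, if $p=2$, is not generalized quaternion'' is false, and the rest of the plan collapses with it. The theorem's lists (b) and (c) contain many groups with non-cyclic odd Sylow subgroups: $P\times Q$ with \emph{both} Sylow subgroups non-cyclic odd groups is allowed and is proved perfect in Lemma~\ref{lm_pq}; likewise $C_{p^a}\times Q$, $Q_{2^a}\times K$, $C_{2^aq^b}\times R$ and $Q_{2^a}\times C_{q^b}\times R$ all have a non-cyclic, non-quaternion Sylow factor yet are in the allowed list. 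So if you try to ``build an explicit induced $5$-cycle'' in every group meeting your description, you will be attempting the impossible in precisely these cases. What actually governs the classification is how many non-cyclic Sylow factors can be tolerated as a function of $|\pi(G)|$: with four primes none of the odd Sylow subgroups may be non-cyclic (two trivially intersecting $p$-subgroups $P_1,P_2$ together with the three other Sylow factors already yield a $C_5$ such as $(QR,\,P_2S,\,P_1R,\,P_2Q,\,P_1S)$); with three primes at most one Sylow factor may be non-cyclic (two non-cyclic factors give the $C_5$ $(P_1Q_1,\,Q_2R,\,P_2Q_1,\,P_1Q_2,\,P_2R)$); with two primes even two non-cyclic odd factors are harmless because there are not enough coprime ``building blocks'' to close an induced odd cycle. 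Your necessity proof must carry out this graded case analysis --- which is the actual content of the paper's argument --- rather than reduce to a single Sylow anomaly plus Kulakoff's theorem.
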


\begin{proof}
Firstly, let $G$ be a finite nilpotent group such that $o(G)$ has $5$ distinct prime divisors say $p_{1}, p_{2}, p_{3}, p_{4}$ and $p_{5}$.  Suppose, $P_{i}$ be the Sylow $p_{i}$-subgroups of $G$ for $1\leq i\leq 5$. Then $\Gamma_{S}(G)$ comprises a $5$-cycle $(P_{1}P_{2}, P_{3}P_{4}, P_{2}P_{5}, P_{1}P_{4}, P_{3}P_{5}, P_{1}P_{2})$. Thus $o(G)$ can have at most $4$ distinct prime divisors. \\
\textbf{Case 1.} Let $p, q, r, s|o(G)$, where $p<q<r<s$ are primes.\\
Let $P, Q, R, S$ be the Sylow $p$, $q$, $r$, $s$-subgroups of $G$ respectively. Here we consider two cases based on $p$.\\
\textbf{Subcase 1.} Let $p$ be odd\\
 \textbf{We claim that no Sylow subgroup of $G$ is non-cyclic}. \\
If possible, let $P$ is non-cyclic. Then there exist at least two $p$-subgroups say $P_{1}, P_{2}$ such that $P_{1}\cap P_{2}=\{e\}$. But, $\Gamma_{S}(G)$ contains a $5$-cycle: $(QR, P_{2}S, P_{1}R, P_{2}Q, P_{1}S, QR)$. Therefore, $G \cong C_{p^{a}q^{b}r^{c}s^{d}}$, where $a, b, c, d \geq 1$.\\
\textbf{Subcase 2.} $p=2$\\
Clearly, none of the Sylow $q$, $r$, $s$-subgroups be cyclic; elsewhere we get a $5$-cycle which contradicts $\Gamma_{S}(G)$ is perfect. So, $G\cong P\times C_{q^{b}r^{c}s^{d}}$.\\
\textbf{We assert that $P$ must be either cyclic or generalized quaternion group $Q_{2^a}$, where $a\geq 3$}.\\
Let, $P$ is neither cyclic nor a generalized quaterion group $Q_{2^a}$. Then $P$ contains at least two subgroups of order $2$ and by the same argument in Subcase-1 we get a $5$-cycle.\\
Thus $G$ is either $C_{2^{a}q^{b}r^{c}s^{d}}$(where, $a, b, c, d \geq 1$ or $Q_{2^a}\times C_{q^{b}r^{c}s^{d}}$( where, $a \geq 3, b, c, d \geq 1$).\\
\textbf{Case 2.} Let $o(G)$ has $3$ distinct prime divisors.\\
Suppose, $p< q<r$ are $3$ distinct prime divisors of $o(G)$ and $P, Q, R$ are the corresponding Sylow subgroups of $G$ repectively. Now, we consider two cases such that either $p=2$ or $p \neq 2$.\\
\textbf{Subcase 1.} $p\neq 2$\\
\textbf{We assert that only one Sylow subgroup of $G$ can be non-cyclic}.\\
For the sake of contradiction, we assume that $G$ has two non-cyclic Sylow subgroups say $P, Q$. Then $\Gamma_{S}(G)$ carries a $5$-cycle $(P_{1}Q_{1}, Q_{2}R, P_{2}Q_{1}, P_{1}Q_{2}, P_{2}R, P_{1}Q_{1})$, where $P_{1}, P_{2}$ and $Q_{1}, Q_{2}$ are two distinct $p$ and $q$-subgroups of $G$ respectively that intersects trivially. Thus, $G$ is either a cyclic group or a non-cyclic group such that only one Sylow subgroup of $G$ is non-cyclic.\\
\textbf{Subcase 2.} $p=2$\\
Clearly, both the Sylow $q$ and $r$ subgroup cannot be non-cyclic.Otherwise, we get a $5$-cycle. \\
Just as done in Case 1 (Subcase 2), we can conclude that $G$ is either a cyclic group $C_{2^{a}q^{b}r^{c}}$ or $Q_{2^a}\times C_{q^{b}r^{c}}$ ($a\geq 3, b,c \geq 1$) or $C_{2^{a}q^{b}}\times R$ ( where, $a, b\geq 1$) or $Q_{2^a}\times C_{q^b}\times R$ ( $a \geq 3$), where $R$ is a non-cyclic group of order power of $r$.\\
\textbf{Case 3.} $o(G)$ has only two distinct prime prime divisors.\\
Let $p<q$ be two distinct primes divides $o(G)$. If $p$ is odd, then $G$ is either $C_{p^{a}q^{b}}$ or $C_{p^a}\times Q$ or $P\times C_{q^b}$ or $P\times Q$, where $a, b \geq 1$ and $P, Q$ are non-cyclic $p$-group and $q$-group respectively.\\
If $p=2$, then we get the possiblities of $G$ are either $Q_{2^a}\times C_{q^b}$ or $Q_{2^a}\times K$ such that $a\geq 3, b\geq 1$ and $K$ is a non-cyclic group of order power of $q$.\\
Converse Part:
Let $G \cong C_{p^{a}q^{b}r^{c}s^{d}}$. First we show that, $\Gamma_{S}(G)$ does not contain a $5$-cycle. If possible let, there exist a $5$-cycle with vertices $A, B, C, D, E$. Without loss of generality, let $pq|o(A)$. Then $o(B)$ is power of $r$ and $o(E)$ is $r^{l}s^{m}$. So, $o(C)$ must be $p^{k}s^{n}$. But $o(D)$ must be either divisible by $r$ or $B \sim D$. If $r|o(D)$ this contrdaicts $D\sim E$. Thus, in any aspect one can show that $\Gamma_{S}(G)$ does not contain any cycle of length $5$. Similarly we can prove that neither $\Gamma_{S}(G)$ nor its complement does not have any odd cycle of length $5$ and above, which proves that $\Gamma_{S}(G)$ is perfect.\\
Following a similar procedure, we can prove that if $G$ is any one of the cyclic group in b) or c) then neither $\Gamma_{S}(G)$ nor the complement of $\Gamma_{S}(G)$ contains any odd cycle of length $5$ and above. This proves the perfectness of $\Gamma_{S}(G)$.\\
For the other groups, the proof follows from Lemmas \ref{lm_cy}, \ref{lm_gen_1}, \ref{lm_gen_2}, \ref{lm_gen_3}, \ref{lm_pq}.
\end{proof}

\section{Triangle-free}
A graph $\Gamma$ is called triangle-free if $\Gamma$ does not contain any triangle as its induced subgraph. \\
The triagle-free graphs are the subclass of the bipartite graphs. In this section we investigate the finite nilpotent groups whose intersection subgroup graph is triangle-free as well as bipartite graph.
\begin{theorem}
\label{th_nil_triangle}
Let $G$ be a finite nilpotent group. Then $\Gamma_{S}(G)$ is triangle-free if and only if $G$ is either a cyclic $p$-group or a generalized quaternion group $Q_{2^n}$ or $Q_{2^a}\times C_{q^b}$(where $a \geq 3, b\geq 1$ and $q$ is an odd prime) or a cyclic group $C_{p^{k}q^{s}}$, where $p, q$ are distinct primes and $k, s\geq 1$.
\end{theorem}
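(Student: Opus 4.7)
My plan is a case analysis on $|\pi(G)|$, exploiting the nilpotent decomposition $G\cong\prod_p P_p$ so that every subgroup of $G$ splits correspondingly, together with Kulakoff's theorem to control the Sylow factors. First I would rule out $|\pi(G)|\ge 3$: three Sylow subgroups at distinct primes pairwise intersect trivially by coprimality and already form a triangle in $\Gamma_S(G)$, so the classification reduces to $|\pi(G)|\le 2$.

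For the $p$-group case, part (b) of Kulakoff's theorem gives that $G$ has a unique subgroup of order $p$ precisely when $G$ is cyclic or $G\cong Q_{2^n}$. In these two cases every pair of non-trivial subgroups shares the unique minimal subgroup, so $\Gamma_S(G)$ has no edges and is trivially triangle-free. Otherwise part (a) produces at least $p+1\ge 3$ subgroups of order $p$, any three of which pairwise intersect trivially and form a triangle.

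For the two-prime case, write $G=P\times Q$ with $\gcd(|P|,|Q|)=1$; every subgroup has the form $H\times K$, and intersections distribute as $(H_1\times K_1)\cap(H_2\times K_2)=(H_1\cap H_2)\times(K_1\cap K_2)$. When both $P$ and $Q$ possess a unique minimal subgroup (i.e., each factor is cyclic or generalized quaternion), a trivial intersection forces one coordinate on each side to be trivial; this identifies the vertex set of $\Gamma_S(G)$ with the nontrivial subgroups of $P$ together with those of $Q$, and the graph is the complete bipartite graph between these two parts, hence triangle-free. Because $Q_{2^a}$ is only available at the prime $2$, this yields exactly the listed $C_{p^kq^s}$ and $Q_{2^a}\times C_{q^b}$. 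If instead some factor, say $P$, is neither cyclic nor generalized quaternion, then the $p$-group step provides three order-$p$ subgroups of $P$ meeting trivially in pairs, and embedding them as $P_i\times\{e\}$ in $G$ gives a triangle in $\Gamma_S(G)$. The only bookkeeping step I expect to be slightly delicate is the verification, in the two triangle-free two-prime families, that mixed products $H\times K$ with both coordinates non-trivial fail the vertex condition so that the bipartite structure is strict; this follows from the unique-minimal-subgroup property of the factors combined with $\gcd(|P|,|Q|)=1$.
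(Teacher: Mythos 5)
Your proposal is correct, and its skeleton coincides with the paper's: bound $|\pi(G)|\le 2$ via three coprime subgroups, and use the Kulakoff/unique-minimal-subgroup dichotomy (cyclic or generalized quaternion versus at least three order-$p$ subgroups) to settle the $p$-group case and to force each Sylow factor to be cyclic or $Q_{2^a}$. Where you genuinely depart from the paper is the converse. The paper verifies triangle-freeness of $C_{p^kq^s}$ and $Q_{2^a}\times C_{q^b}$ by enumerating the possible orders of three would-be triangle vertices and checking each combination. You instead use the coprime splitting $S=(S\cap P)\times(S\cap Q)$ of every subgroup of a nilpotent group with $|\pi(G)|=2$, together with $(H_1\times K_1)\cap(H_2\times K_2)=(H_1\cap H_2)\times(K_1\cap K_2)$, to identify $\Gamma_S(G)$ outright as the complete bipartite graph between the non-trivial subgroups of $P$ and those of $Q$ whenever both factors have a unique minimal subgroup (your check that mixed subgroups $H\times K$ with both coordinates non-trivial fail the vertex condition is exactly the point that makes this identification exact, and it goes through). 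This buys more than the paper's argument: it gives triangle-freeness and bipartiteness simultaneously, so Corollary \ref{cor_nil_bip} follows at once without invoking the perfectness theorem, and it explains structurally why these are the only two-prime examples. The paper's case-check is more elementary but yields only the triangle-free conclusion.
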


\begin{proof}
Let $G$ be a finite group such that $pqr|o(G)$, where $p, q, r$ are $3$ distinct primes. Then $G$ contains $3$ non-trivial subgroups say $H, K, L$ of order $p, q, r$ respectively. As intersection of any two subgroups is trivial so $\Gamma_{S}(G)$ contains a triangle with vertices $H, K, L$. Hence, $o(G)$ can have at most $2$ distinct prime divisors. \\
I) Let $o(G)=p^{\alpha}q^{\beta}$, where $\alpha, \beta \geq 1$. \\
a.) First suppose \textbf{both $p, q$ are odd}. If any one Sylow subgroup is non-cyclic (say Sylow $p$-subgroup is non-cyclic), then there exist two subgroups say $P_{1}, P_{2}$ of order power of $p$ such that they intersect trivially. Therefore, $\Gamma_{S}(G)$ contains a triangle with $3$ verices as $P_{1}, P_{2}, Q$, where $Q$ is the Sylow $q$-subgroup of $G$. Thus, we conclude that both Sylow subgroups are cyclic. Hence, $G$ is the cyclic group $C_{p^{k}q^{s}}$, where $p, q$ are distinct primes.\\
Now, let $o(G)=2^{\alpha}q^{\beta}$, where $\alpha, \beta \geq 1$ and $q$ is an odd prime. Clearly, the Sylow $q$-subgroup of $G$ must be cyclic; otherwise by the argument in I) a.) we can easily show that $\Gamma_{S}(G)$ carries a triangle. So, the Sylow $q$-subgroup is $C_{q^{\beta}}$.\\
Again, if the Sylow $2$-subgroup is cyclic then $G\cong C_{2^{\alpha}q^{\beta}}$, where $\alpha, \beta \geq 1$. \\
Let the Sylow $2$-subgroup is generalized quaternion group $Q_{2^{\alpha}}$(for $\alpha \geq 3$). We obtain $G \cong Q_{2^{\alpha}}\times C_{p^{\beta}}( \alpha \geq 3, \beta \geq 1)$.\\
But if the Sylow $2$-subgroup of $G$ is neither cyclic nor $Q_{2^{\alpha}}$, then just by the same argument as done in I) a.) we can prove that $\Gamma_{S}(G)$ comprises a triangle.\\
Therefore, if $o(G)$ has two distinct prime divisors then $G$ is either a cyclic group $C_{p^{\alpha}q^{\beta}}$ or $Q_{2^{\alpha}}\times C_{q^{\beta}}$.\\
II) Next consider that $G$ is a $p$-group. Here we consider the cases $p$ is odd prime and $p=2$.\\
\textbf{Case 1.} $p$ is an odd prime:\\
If $G$ is cyclic, then $\Gamma_{S}(G)$ is an empty graph; hence $\Gamma_{S}(G)$ is triangle-free.\\
Suppose $G$ is non-cyclic, so by Kulakoff Theorem $G$ has at least $3$ distinct subgroups of order $p$ that intersect trivially. Thus, $\Gamma_{S}(G)$ contains a triangle.\\
\textbf{Case 2.} $p=2$:\\
If $G$ is cyclic, just as in Case-1 clearly $\Gamma_{S}(G)$ is triangle-free.\\
Now, $G$ is other than generalized quaternion group then $G$ has at least $3$ subgroups of order $2$; these $3$ vertices forms a triangle in $\Gamma_{S}(G)$.\\
In case of generalized quaternion group, $\Gamma_{S}(G)$ is an empty graph as $G$ contains unique involution that belongs to each of the non-trivial subgroups of $G$. Hence in this case $G$ is triangle-free.\\
\textbf{For the converse part:}\\
Let $G$ is the cyclic group $C_{p^{k}q^{s}}$, where $p, q$ are distinct primes. The non-trivial subgroups are of order either a) power of $p$, or b) power of $q$, or c) of the form $p^{i}q^{j}$, where $1\leq i \leq k$, $1\leq j\leq s$ and $(i, j)\neq (k, s)$. If possible let $\Gamma_{S}(G)$ contains a triangle with vertices say $P, Q, R$. Now, if we select any of the options in (a), (b), or (c) for the orders of $P, Q$ and $R$, we reach a contradiction. Hence, $\Gamma_{S}(G)$ is triangle free.\\
Let $G\cong Q_{2^{\alpha}}\times C_{q^{\beta}}$. Then any two non-trivial subgroups of $Q_{2^{\alpha}}$ has order power of $2$ and they have non-trivial intersection. So, in a triangle there exists at most one subgroup of $Q_{2^{\alpha}}$. Similarly, as the Sylow $q$-subgroup is cyclic so only one vertex of a triangle be the subgroup of $C_{q^{\beta}}$. However, for any choices of the third vertex, we obtain a non-trivial intersection of the third vertex with the preceding vertices. Consequently, they are unable to form a triangle.\\
\end{proof}

\begin{corollary}
\label{cor_nil_bip}
Let $G$ be a finite nilpotent group. Then $\Gamma_{S}(G)$ is bipartite if and only if $G$ is either a cyclic $p$-group or a generalized quaternion group $Q_{2^n}$ or $Q_{2^a}\times C_{q^b}$ (where $a \geq 3, b\geq 1$ and $q$ is an odd prime) or a cyclic group $C_{p^{k}q^{s}}$, where $p, q$ are distinct primes and $k, s\geq 1$.
\end{corollary}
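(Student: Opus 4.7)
The plan is to leverage Theorem \ref{th_nil_triangle} directly. For the forward direction, every bipartite graph is triangle-free, since any triangle is an odd cycle and bipartite graphs have no odd cycles. Hence if $\Gamma_{S}(G)$ is bipartite then it is triangle-free, and Theorem \ref{th_nil_triangle} immediately restricts $G$ to the four families listed in the statement.

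For the converse direction I would verify case by case that each group in the list actually has $\Gamma_{S}(G)$ bipartite; in fact I expect to show the stronger statement that $\Gamma_{S}(G)$ is either the null graph or complete bipartite. If $G$ is a cyclic $p$-group or a generalized quaternion group $Q_{2^{n}}$, any two non-trivial subgroups share a common non-trivial subgroup (the unique subgroup of order $p$ in the cyclic case, or the unique involution in $Q_{2^{n}}$). Hence no non-trivial subgroup has a non-trivial trivial-intersection partner, so $\Gamma_{S}(G)$ has no vertices and is trivially bipartite.

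For $G \cong C_{p^{k}q^{s}}$, each subgroup is cyclic of the form $C_{p^{i}q^{j}}$ with $0 \leq i \leq k$, $0 \leq j \leq s$, and two such subgroups intersect in $C_{p^{\min(i_{1},i_{2})}q^{\min(j_{1},j_{2})}}$. This is trivial iff one factor is a pure $p$-subgroup and the other a pure $q$-subgroup. Consequently, the mixed subgroups $C_{p^{i}q^{j}}$ with $i,j \geq 1$ are not vertices, the vertex set partitions into $\{C_{p^{i}} : 1 \leq i \leq k\}$ and $\{C_{q^{j}} : 1 \leq j \leq s\}$, and every edge crosses the partition; thus $\Gamma_{S}(G) \cong K_{k,s}$. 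A parallel argument handles $G \cong Q_{2^{a}} \times C_{q^{b}}$: because $\gcd(2, q) = 1$, every subgroup has the form $H \times K$ with $H \leq Q_{2^{a}}$ and $K \leq C_{q^{b}}$, and $(H_{1}\times K_{1}) \cap (H_{2}\times K_{2}) = (H_{1}\cap H_{2}) \times (K_{1}\cap K_{2})$. Since any two non-trivial subgroups of $Q_{2^{a}}$ meet in the central involution and any two non-trivial subgroups of $C_{q^{b}}$ meet in $C_{q}$, triviality of the intersection forces one of $H, K$ to be $\{e\}$. So the vertices split into pure $2$-part subgroups $H \times \{e\}$ and pure $q$-part subgroups $\{e\} \times K$, with all edges between the two classes, yielding a complete bipartite graph.

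There is no genuine obstacle here: the forward implication is essentially a tautology given Theorem \ref{th_nil_triangle}, and the converse reduces to a mechanical inspection of subgroup intersections using the direct-product structure and the well-known fact that cyclic $p$-groups and generalized quaternion groups have a unique minimal non-trivial subgroup. The only subtle point to emphasise in the write-up is that in the $C_{p^{k}q^{s}}$ and $Q_{2^{a}}\times C_{q^{b}}$ cases the mixed subgroups fail to be vertices of $\Gamma_{S}(G)$, which is what prevents odd cycles from arising.
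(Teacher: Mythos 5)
Your proof is correct. The forward direction is identical to the paper's: bipartite implies triangle-free, and Theorem \ref{th_nil_triangle} does the rest. For the converse, however, you take a genuinely different and arguably cleaner route. The paper deduces bipartiteness indirectly by combining Theorem \ref{th_nil_perfect} (no induced odd cycles of length $\geq 5$) with Theorem \ref{th_nil_triangle} (no triangles), then invoking the fact that a shortest odd cycle is induced. You instead compute the graphs explicitly: null for cyclic $p$-groups and $Q_{2^n}$ (unique minimal subgroup, so no vertices at all), and $K_{k,s}$ respectively complete bipartite for $C_{p^{k}q^{s}}$ and $Q_{2^{a}}\times C_{q^{b}}$, using the coprime direct-product decomposition of subgroups and the observation that mixed subgroups fail to be vertices. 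Your version is more elementary (it does not lean on the perfect-graph machinery), self-contained, and yields strictly more information, namely the exact isomorphism type of $\Gamma_{S}(G)$ in each case; the paper's version is shorter given that Theorems \ref{th_nil_perfect} and \ref{th_nil_triangle} are already in hand. Both are valid.
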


\begin{proof}
Let $G$ be a finite nilpotent group such that $\Gamma_{S}(G)$ is bipartite. Then $\Gamma_{S}(G)$ does not contain any odd cycle of length $3$ and above. This implies that $\Gamma_{S}(G)$ must be triangle-free. Hence, $G$ is one of the above mentioned group by the Theorem \ref{th_nil_triangle}.\\
Converse Part: If $G$ is any one of the stated group then $\Gamma_{S}(G)$ is perfect (according to the Theorem \ref{th_nil_perfect}) as well as triangle free (due to Theorem \ref{th_nil_triangle}). Thus, $\Gamma_{S}(G)$ does not contain an odd cycle of length $3$ and above. This implies $\Gamma_{S}(G)$ must be a bipartite graph.
\end{proof}

The above Theorem \ref{th_nil_triangle} characterizes the all nilpotent groups such that $\Gamma_{S}(G)$ is triangle-free. If we consider $G$ as any finite group, we observe that $G$ is either a cyclic $p$-group or a generalized quaternion group or $o(G)$ has exactly two distinct prime divisors such that either both the Sylow subgroups are cyclic or one of the Sylow subgroups is generalized quaternion group $Q_{2^n}$ and the other is a cyclic group. But we are unable to find the expicit structures of the group $G$. So, there is an open question that :
\begin{problem}
Classify the all non-nilpotent groups such that $\Gamma_{S}(G)$ is triangle-free.
\end{problem}

\section{Cluster graph}
A graph is called a cluster graph if it does not contain $P_{3}$ as its induced subgraph. \\
We are going to analyze all the finite nilpotent groups $G$ for which $\Gamma_{S}(G)$ is a cluster graph. 
\begin{theorem}
Let $G$ be a finite nilpotent group. Then $\Gamma_{S}(G)$ is a cluster graph if and only if $G$ is either a cyclic $p$-group(for any prime $p$) or generalized quaternion $2$-group or a $2$-generated $p$-group of exponent $p$ or $C_{pq}$, where $p, q$ are distinct primes.
\end{theorem}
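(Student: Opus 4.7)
Both directions rely on the subgroup-closed property of $\Gamma_{S}$. For sufficiency, each listed group is inspected directly. In a cyclic $p$-group the non-trivial subgroups form a chain, and in $Q_{2^n}$ all of them contain the unique involution, so in both cases $\Gamma_{S}(G)$ has no edges and is vacuously a cluster graph. In $C_p\times C_p$ the $p+1$ subgroups of order $p$ pairwise meet trivially and no other non-trivial proper subgroups exist, giving $\Gamma_{S}(G)\cong K_{p+1}$. In $C_{pq}$ only the subgroups $C_p$ and $C_q$ are non-trivial proper, and they form a single edge. Each of these is a disjoint union of cliques.

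For necessity I would first bound $|\pi(G)|\le 2$: if three primes $p,q,r$ divide $|G|$ with Sylow subgroups $P,Q,R$, then $R-Q-(PR)$ is an induced $P_3$, since $R\cap Q=Q\cap PR=\{e\}$ by coprimality while $R\subsetneq PR$. Assume next $|\pi(G)|=2$ and write $G=P\times Q$. If $P$ carries two non-trivial subgroups $P_1,P_2$ with $P_1\cap P_2=\{e\}$, then for any non-trivial $Q_0\le Q$ the triple $(P_1\times Q_0)-P_2-Q_0$ is an induced $P_3$: the first two meet trivially in the $P$-coordinate, the last two by coprimality, and $Q_0\subset P_1\times Q_0$. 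So each Sylow factor has a unique minimal non-trivial subgroup, and by the Kulakoff Theorem it must be cyclic or generalised quaternion. A further $P_3$ built from the chain $C_p\subset C_{p^2}$ inside $C_{p^a}$ ($a\ge 2$), or from $C_2\subset C_4$ inside $Q_{2^a}$ ($a\ge 3$), together with a non-trivial subgroup of the other factor, eliminates every case except $P=C_p$ and $Q=C_q$, leaving only $G=C_{pq}$.

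It remains to treat the case when $G$ is a $p$-group. If $G$ has a unique subgroup of order $p$, the Kulakoff Theorem yields cyclic or $Q_{2^n}$. Otherwise fix two distinct order-$p$ subgroups; if $|G|=p^2$ this forces $G=C_p\times C_p$, the group appearing in the statement. When $|G|\ge p^3$, I would build an induced $P_3$ in two main cases. If $G$ contains an element $x$ of order $p^2$, set $H_1=\langle x^p\rangle$ and pick any order-$p$ subgroup $H_2\ne H_1$; because $H_1$ is the unique order-$p$ subgroup of $\langle x\rangle$, we have $H_2\cap\langle x\rangle=\{e\}$, and $\langle x\rangle-H_2-H_1$ is induced. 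If instead $G$ has exponent $p$ with $|G|\ge p^3$, two sub-cases arise: if $G$ is elementary abelian of rank $\ge 3$, any three distinct lines $V_1,V_2,V_3$ and the plane $W=V_1+V_2$ give the induced path $W-V_3-V_1$; if $G$ is non-abelian of exponent $p$, then any two distinct maximal subgroups $M_1,M_2$ satisfy $M_1\cap M_2\supseteq[G,G]\ne\{e\}$, and any order-$p$ subgroup $H\not\subseteq M_1\cup M_2$ produces $M_1-H-M_2$.

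The main obstacle is this non-abelian exponent-$p$ sub-case: one cannot fall back on a cyclic $C_{p^2}$, and must instead use the Frattini identity $\Phi(G)=G^p[G,G]$ together with $M_1M_2=G$ to guarantee simultaneously that $M_1\cap M_2\ne\{e\}$ and $M_1\cup M_2\ne G$, so that the required order-$p$ subgroup $H$ outside $M_1\cup M_2$ exists. Once these structural points are in place, the remainder of the argument is a routine reduction via the Kulakoff Theorem and the subgroup-closed property recalled at the beginning.
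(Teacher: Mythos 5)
Your handling of $|\pi(G)|\ge 3$, of $|\pi(G)|=2$, and of $p$-groups with a unique minimal subgroup is correct and runs parallel to the paper's: the same style of induced $P_3$'s (a Sylow product against two coprime factors, a chain $C_p\subset C_{p^2}$ or $C_2\subset C_4\subset Q_{2^a}$ against a coprime subgroup) and the same appeal to the Kulakoff theorem. The real divergence is in the exponent-$p$ case, and it is substantive. The paper's converse argues that in a $2$-generated group of exponent $p$ every non-identity element has order $p$, ``thus any $3$ vertices form a triangle''; this tacitly assumes every vertex of $\Gamma_{S}(G)$ is cyclic of order $p$. Your maximal-subgroup argument shows that assumption fails: if $G$ is non-abelian of exponent $p$ (e.g.\ the extraspecial group of order $p^{3}$ and exponent $p$, which is $2$-generated), two distinct maximal subgroups $M_1,M_2$ are vertices (each meets $\langle x\rangle$ trivially for any $x$ outside it), they intersect in $\Phi(G)\supseteq [G,G]\neq\{e\}$, and any $x\notin M_1\cup M_2$ yields the induced path $(M_1,\langle x\rangle, M_2)$. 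Consequently your proof does not establish the statement as printed; rather, it refutes the sufficiency of ``$2$-generated $p$-group of exponent $p$'' for the non-abelian members of that class, and accordingly your sufficiency step only verifies $C_p\times C_p$. What you have actually proved is the corrected classification in which ``$2$-generated $p$-group of exponent $p$'' is replaced by ``$C_p\times C_p$''; the defect lies in the paper's converse, not in your argument. You should state this explicitly rather than presenting your write-up as a proof of the theorem as given.

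Two minor repairs to your own text: in the elementary abelian case the path $(W,V_3,V_1)$ with $W=V_1+V_2$ is induced only if $V_3\not\le W$, so ``any three distinct lines'' should be ``three lines not contained in a common plane'' (available exactly when the rank is at least $3$, which is the case you are in); and in the final paragraph the Frattini identity is not needed beyond the observations you already use, namely $[G,G]\le\Phi(G)\le M_1\cap M_2$ and the elementary fact that no group is the union of two proper subgroups.
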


\begin{proof}
Let $|\pi(G)|\geq 3$ and $G$ be a finite nilpotent group such that $pqr|o(G)$. Then $G$ contains a subgroup say $H$ of order $pq$, a subgroup $P$ of order $p$ and $R$ of order $r$. Thus $\Gamma_{S}(G)$ contains an induced path $(P, R, H)$, where $P\subset H$. Therefore $|\pi(G)|\leq 2$.\\
\textbf{Case 1.} $|\pi(G)|=2$: \\ Let us consider $G$ be a group of order $p^{\alpha}q^{\beta}$, where $\alpha, \beta \geq 1$. Let $P, Q$ be Sylow $p$ and $q$-subgroup of $G$ respectively.\\
\textbf{Subcase a.} Firstly, suppose both $p, q$ are odd primes.\\
We claim that both Sylow subgroups must be cyclic. If possible let, the Sylow $p$-subgroup of $G$ is non-cyclic. Then there exist two elements say $a, b$ of order $p$ and an element say $c$ of order $q$ such that $\Gamma_{S}(G)$ contains a path $(H_{1}, H_{2}, H_{3})$, where $H_{1}=\langle c\rangle, H_{2}=\langle a\rangle, H_{3}=\langle bc \rangle$. This gives us $G\cong C_{p^{\alpha}q^{\beta}}$. For the sake of contradiction we assume that at least one of $\alpha, \beta$ is larger than $1$. Let $\alpha >1$. Then there exist elements say $x, y\in G$ with $o(x)=p^{2}$ and $o(y)=q$ such that $(x, y, x^{p})$ is contained in $\Gamma_{S}(G)$. Therefore both $\alpha=\beta=1$ and hence $G\cong C_{pq}$.\\
\textbf{Subcase b.} Next, let $p=2$ and $q$ be an odd prime. \\
Now, $G\cong P'\times Q$, where $P', Q$ are Sylow $2$ and $q$-subgroup of $G$ respectively. \\
We assert that $G$ must be cyclic. \\
Clearly, we observe that the Sylow $q$-subgroup must be cyclic; otherwise we get $P_{3}$. Now, if the Sylow $2$-subgroup $P'$ is generalized quaternion group then $P'$ has two subgroups of order $4$ say $A, B$ with $A\cap B\neq \{e\}$. Hence, $\Gamma_{S}(G)$ contains a path $A, Q, B$.\\
On the other hand, if $P'$ is neither cyclic nor generalized quaternion then just as the similar argument as done in subcase-a we can easily show that $\Gamma_{S}(G)$ contains a path $P_{3}$. Thus, $G$ must be the cyclic group $C_{2q}$ (c.f. the argument as done in Subcase-a). \\
Thus, if $|\pi(G)|=2$ then $G \cong C_{pq}$, where $p, q$ are distinct primes.\\
\textbf{Case 2.} $|\pi(G)|$=1. \\ Then $G$ is a $p$-group. If $G$ is cyclic or generalized quaternion $2$-group then $\Gamma_{S}(G)$ is an empty graph and hence it is cluster graph.\\
Let $G$ is neither cyclic nor a generalized quaternion $2$-group. Then there exist two elements say $a, b$ of order $p$. If $G$ has an element of order greater than $p$ say $x$ such that $x^{p}=a$ then $\Gamma_{S}(G)$ contains a $3$-vertex induced path $(\langle x\rangle , \langle b\rangle , \langle a \rangle)$. Thus, $G$ must be a $p$-group of exponent $p$.\\
Moreover, we observe that $G$ must be $2$-generated; elsewhere $G$ contains a subgroup $C_{p}\times C_{p}\times C_{p}$. One can check $\Gamma_{S}(C_{p}\times C_{p}\times C_{p})$ contains $P_{3}$.\\
\textbf{Converse Part:} Let $G$ is a $2$-generated $p$-group of exponent $p$. Then every non-identity element is of order $p$; thus any $3$-vertices form a triangle. Hence, $\Gamma_{S}(G)$ is a cluster graph.\\
Again, let $G\cong C_{pq}$. Then $G$ has unique non-trivial subgroup is of order $p, q$. So, $\Gamma_{S}(G)$ is unable to contain a $3$-vertex induced path. 
\end{proof}


\section{Claw-free}
A graph is called claw-free if it does not contain $K_{1, 3}$ as its induced subgraph.\\
Claw-free graph are the subclass of line graphs. In this sction we investigate the finite nilpotent groups $G$ such that $\Gamma_{S}(G)$ is claw-free.
\begin{theorem}
Let $G$ be a finite nilpotent group. Then $\Gamma_{S}(G)$ is claw-free if and only $G$ is either cyclic $p$-group or generalized quaternion $2$-group or a $3$-generated $p$-group of exponent $p$ or a $2$-generated $p$-group in which every non-identity elements are of order $p, p^2$ or a cyclic group $C_{p^{a}q^{b}}$ (where $p, q$ are primes and $a, b \geq 1$) or $P\times C_{q}$, where $P$ is a non-cyclic, $2$-generated $p$-group of exponent $p$ or a cyclic group $C_{pqr}$ ($p, q, r$ are $3$ distinct primes).
\end{theorem}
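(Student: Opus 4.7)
The plan is to follow the template of the preceding theorems in this section: bound $|\pi(G)|$ by exhibiting a claw whenever $G$ has too many prime divisors, then run a Sylow-structure case analysis for the remaining values of $|\pi(G)|$, and finally verify claw-freeness on each listed family. For the bounding step, suppose $|\pi(G)|\ge 4$ with Sylow subgroups $P_1,P_2,P_3,P_4$ at distinct primes. Because $G$ is nilpotent the $P_i$ commute, so $P_1$ is adjacent to each of $P_2P_3$, $P_2P_4$, $P_3P_4$ (coprime orders give trivial intersection), while those three leaves pairwise share a common Sylow factor and are therefore pairwise non-adjacent. This is an induced $K_{1,3}$, so we may assume $|\pi(G)|\le 3$.

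For $|\pi(G)|=3$, write $G=P\times Q\times R$. I would argue each Sylow must be cyclic of prime order, yielding $G\cong C_{pqr}$. If some Sylow, say $P$, is non-cyclic, Kulakoff's theorem provides at least three subgroups of $P$ with a common non-trivial intersection: three subgroups of order $p^2$ sharing a subgroup of order $p$ when $p$ is odd, or three cyclic subgroups of order $4$ meeting in the unique involution when $P\cong Q_{2^a}$. Multiplying these by $R$ produces three pairwise-intersecting leaves, each disjoint from $Q$, hence a claw centered at $Q$. If every Sylow is cyclic but some has order strictly larger than its defining prime, a chain-plus-padding construction (using a subgroup and its $p$-th power, padded with the other Sylows) produces a claw in the same manner.

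For $|\pi(G)|=2$, write $G=P\times Q$ and run the analogous but more intricate case analysis, distinguishing whether each Sylow is cyclic, generalized quaternion, or otherwise; for each non-cyclic case one checks whether a third comparable subgroup exists with which to produce three pairwise-intersecting leaves adjacent to a common disjoint center. For $|\pi(G)|=1$ (i.e.\ $G$ is a $p$-group), the cyclic and generalized quaternion cases give an empty $\Gamma_S(G)$ and are trivially claw-free; for the remaining $p$-groups, Kulakoff's theorem combined with a careful count of generators and the exponent pins down the surviving families, since an extra generator or an element of order $p^3$ creates enough independent subgroups to realize a claw.

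The converse is a subgroup-lattice check on each listed family, verifying that no vertex admits three pairwise non-adjacent neighbors. The main obstacle I anticipate is the two-prime subcase analysis, where the boundary between claw-free and claw-containing direct products is delicate: a single additional subgroup of $P$ or $Q$ can simultaneously produce a non-trivial common intersection among three candidate leaves and leave another disjoint subgroup available to serve as the center of a claw, so each subcase requires pinpointing precisely which Sylow structures rule this out.
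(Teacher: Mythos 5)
Your opening moves match the paper's: bound $|\pi(G)|$ by exhibiting a claw from Hall-type subgroups, then do a Sylow case analysis. Your $K_{1,3}$ with center $P_1$ and leaves $P_2P_3,\ P_2P_4,\ P_3P_4$ is a correct (and clean) way to force $|\pi(G)|\le 3$, and your three-prime analysis is essentially sound, though one justification is off: a non-cyclic $p$-group need not have three subgroups of order $p^2$ with common non-trivial intersection (take $P\cong C_p\times C_p$, whose only subgroup of order $p^2$ is itself). The construction survives anyway, because after multiplying three \emph{distinct} subgroups of order $p$ (which exist by Kulakoff) by $R$, the leaves pairwise intersect in $R$ regardless; you should say that rather than claim a common intersection inside $P$.

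The genuine gap is that the two-prime case, the $p$-group case, and the entire converse are left as placeholders, and these are where all of the theorem's content lives. You never derive why, for $|\pi(G)|=2$ with a non-cyclic Sylow $p$-subgroup $P$, one is forced to $P\times C_q$ with $P$ a $2$-generated group of exponent $p$ and with the cyclic factor of order exactly $q$ (not $q^b$); nor why $Q_{2^a}\times C_{q^b}$ — which survives the triangle-free and cograph classifications — is killed here (the paper excludes it via distinct order-$4$ subgroups of $Q_{2^a}$ forming leaves against the center $C_q$). Likewise, in the $p$-group case you do not extract the two surviving families ($3$-generated of exponent $p$; $2$-generated with all elements of order $p$ or $p^2$), which requires specific claws from an element of order $p^3$ and from subgroups of type $(C_p)^4$ or $C_{p^k}\times C_p\times C_p$. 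Finally, the converse verification for each family (e.g., that in a $2$-generated exponent-$p$ group any two candidate leaves lying in distinct cyclic subgroups are forced to be adjacent) is asserted but not performed. As written, the proposal is a correct plan in the same spirit as the paper's proof, but not yet a proof.
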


\begin{proof}
Let $G$ be a finite nilpotent group such that $\Gamma_{S}(G)$ is claw-free. We observe that, $|\pi(G)|\leq 3$; elsewhere $\Gamma_{S}(G)$ contains a claw with $3$ pendent vertices are of orders $q, qr, qs (r \neq s)$ and the central vertex is of order $p$.\\
\textbf{Case 1.} $|\pi(G)|=3$\\
Let $p<q<r$ be distinct primes dividing $o(G)$ and $P, Q, R$ be the corresponding Sylow subgroups of $G$ respectively. We claim that the Sylow subgroups of $G$ must be cyclic. \\
First, we show that Sylow $q$ and $r$ subgroup must be cyclic. Otherwise, if Sylow $q$-subgroup is non-cyclic then $G$ has at least two subgroups say $H, K$ of order $q$. So, $\Gamma_{S}(G)$ contains a claw with $3$ pendent vertices as $A, B, C$ (where, $H \subset C$) of orders $p, pr, pq$ respectively and central vertex is $K$. Similarly, we can say that the Sylow $r$-subgroup must be cyclic. Therefore, $G\cong P\times C_{q^{k}r^{l}}$.\\
Next, we observe that $k=l=1$; elsewhere $\Gamma_{S}(G)$ carries a claw. \\
Now, let $p$ be odd. So, just as similar argument for Sylow $q$-subgroup we can say $P$ must be cyclic group of order $C_{p}$.\\
If $p=2$, then $P$ is either $C_{2}$ or $Q_{2^k}$. As otherwise $P$ have two distinct subgroups of order $p$ so we can construct a claw. \\
If $P\cong Q_{2^k}$, then $P$ has two subgroups say $H, K$ of order $4$. This gives that $\Gamma_{S}(G)$ contains a claw together with $3$ pendent vertices as $H, K, A$ and the central vertex $Q$, where $R, H \subset A$. Hence, $P\cong C_{2}$.\\
Therefore, in any aspect $G\cong C_{pqr}$, where $p, q, r$ are $3$ distinct primes.\\
\textbf{Case 2.} $|\pi(G)|=2$\\
Let $o(G)=p^{a}q^{b}$. Here we consider two cases depending on $p, q$.\\
\textbf{Subcase 1.} both $p, q$ are odd primes\\
Let $P, Q$ be the Sylow subgroups of $G$. Our assertion is that one of two Sylow subgroups must be cyclic. Due to potential contradiction, we suggest that both $P, Q$ are non-cyclic. Then $G$ contains two $p$-subgroups $P_{1}, P_{2}$ and two $q$-subgroups $Q_{1}, Q_{2}$. Therefore, $\Gamma_{S}(G)$ contains a claw with $3$ pendent vertices $P_{1}\times Q_{1}, P_{1}, P_{1}\times Q_{2}$ and the central vertex as $P_{2}$. This proves that one Sylow subgroup say $Q$ must be cyclic. Then $G\cong P\times C_{q^b}$( where, $r\geq 1$).\\
If $P$ is cyclic then $G \cong C_{p^{a}q^{b}}$( $a, b\geq 1$).\\
Let $P$ is non-cyclic. Our claim is that $P$ must be a $2$-generated $p$-group of exponent $p$. Otherwise, $G$ contains a subgroup $M \cong C_{p^k}\times C_{p}\times C_{p}\times C_{q^b}$. But, $\Gamma_{S}(M)$ contains a claw. This gives $G\cong P\times C_{q^b}$ such that $P$ is a non-cyclic, $2$-generated $p$-group of exponent $p$. Moreover, if $b >1$ then $G$ has a subgroup $C_{p}\times C_{p}\times C_{q^b}$, whose intersection subgroup graph contains a claw. Thus $G \cong P\times C_{q}$, where $P$ is a non-cyclic, $2$-generated $p$-group of exponent $p$.\\
\textbf{Subcase 2.} one of $p, q$ is $2$\\
 Let $o(G)=2^{a}q^{b}$. Using similar argument as done in Subcase-1 we conclude that the Sylow $q$ subgroup must be cyclic.
Let $P$ be the Sylow $2$-subgroup of $G$. \\
If $P$ is cyclic then $G\cong C_{2^{a}q^{b}}$.\\
Let $P$ be generalized quaternion. Then $G$ comprises a subgroup $C_{4}\times C_{4}\times C_{q^b}$. As, $\Gamma_{S}(C_{4}\times C_{4}\times C_{q^b})$ contains a claw so is $\Gamma_{S}(G)$.\\
Now, let $P$ be neither cyclic nor a generalized quaternion. Then by using similar argument as done in Subcase-1, we conclude that $P$ must be a non-cyclic, $2$-generated $2$-group of exponent $2$.\\
Thus, in Case-2 we obtain $G$ is either a cyclic group $C_{p^{a}q^{b}}$ or $P\times C_{q^b}$, where $P$ is a non-cyclic, $2$-generated $p$-group of exponent $p$.\\
\textbf{Case 3.} $|\pi(G)|=1$.\\
 Let $o(G)=p^{a}$. \\
If $G$ is either cyclic or generalized quaternion $2$-group then it is clear that $\Gamma_{S}(G)$ is claw-free.\\
For the other $p$-groups we claim that $G$ can have elements of orders $p$ or $p^{2}$. Let $G$ have an element of order $p^{k}$ where $k\geq 3$, then $\Gamma_{S}(G)$ contains a claw $A, B, C, D$, where $A, B, C \in C_{p^k}$ and $D$ has order $p$ such that the intersection of $D$ with any one of $A, B, C$ is trivial. \\
Now, let $G$ have an element of order $p^2$. Then $G$ must be $2$-generated; elsewhere $G$ contains $C_{p^k}\times C_{p}\times C_{p}$. One can check $\Gamma_{S}(G)$ contains a claw.\\
Again, if $G$ is a $p$-group of exponent $p$ then $G$ must be $3$-generated; otherwise as $G$ has a subgroup $(C_{p})^{k}$, where $k \geq 4$. But $\Gamma_{S}(G)$ contains a claw.\\
\textbf{Converse part }:
If $G$ is cyclic $p$-group or generalized quaternion $2$-group, then there is nothing to prove. \\
Let $G$ be a non-cyclic, $3$-generated $p$ group of exponent $p$. Clearly, $G$ has $3$ distinct subgroups say $P_{1}, P_{2}, P_{3}$ of orders $p$. If $\Gamma_{S}(G)$ contains a claw then any two pendent vertices must belong to distinct cyclic $p$-subgroup. Thus they are adjacent, which implies that $\Gamma_{S}(G)$ is claw-free.\\
Similarly, for the case $G$ is a $2$-generated $p$-group in which every element is of order $p$ or $p^2$ one can prove that $\Gamma_{S}(G)$ is claw-free. \\
On the other, hand if $G$ is the cyclic group $C_{p^{a}q^{b}}$, where $a, b \geq 1$, then as it contains unique subgroup of each order so we can choose $3$ vertices of order power of $p$, power of $q$. But the remaining vertex must be of order divisible by $pq$ whose intersection is non-trivial with the other vertices. Hence $G$ must be claw-free in this case.\\
Again, let $G\cong P\times C_{q}$, where $P$ is a $2$-generated $p$-group of exponent $p$. Then $P$ contains two subgroups, say $P_{1}, P_{2}$, of orders $p$. If possible let, $\Gamma_{S}(G)$ contain a claw. Without loss of generality, let the central vertex be $P_{2}$. Then the pendent vertex can be $P_{1}, P_{1}\times C_{q}, C_{q}$. But this fails to form a claw as $P_{1}\cap C_{q}=\{e\}$.\\
Also, if $G\cong C_{pqr}$, it is easy to prove that $\Gamma_{S}(G)$ is claw-free.
\end{proof}

\begin{problem}
Find the all finite nilpotent groups $G$ such that $\Gamma_{S}(G)$ is a line graph.
\end{problem}


\section{Cograph}
A graph $G$ is a cograph if it has no induced subgraph isomorphic to the four-vertex path $P_4$. Cographs form the smallest class of graphs 
containing the $1$-vertex graph and closed under the operations of disjoint
union and complementation.\\
This section is devoted to the finite nilpotent groups, the symmetric group $S_{n}$,  the alternating group $A_{n}$ and the dihedral groups $D_{n}$ whose intersection subgroup graph is a chordal
graph.
\begin{proposition}
\label{lm_p_co}
Let $G$ be a $p$-group. Then $\Gamma_{S}(G)$ is a cograph if and only if $G$ is either a cyclic $p$-group or a generalized quaternion $2$-group or a non-cyclic, $2$-generated $p$-group.
\end{proposition}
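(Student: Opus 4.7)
The plan is to prove the equivalence in two directions. For sufficiency I verify that $\Gamma_S(G)$ is $P_4$-free for each listed class, and for necessity I exhibit an induced $P_4$ whenever $G$ is non-cyclic, non-quaternion and satisfies $d(G) \geq 3$. The two easy sub-cases of sufficiency are immediate: if $G$ is cyclic then the subgroup lattice is a chain and every two non-trivial subgroups have non-trivial intersection, so $\Gamma_S(G)$ is edgeless; if $G = Q_{2^n}$ then Kulakoff's theorem (part b) supplies a unique subgroup of order $2$ contained in every non-trivial subgroup, so again $\Gamma_S(G)$ is edgeless. Both are trivially cographs.

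For the remaining sufficiency sub-case, $G$ non-cyclic and $2$-generated, I would exploit that $G/\Phi(G) \cong C_p \times C_p$, so $G$ has exactly $p+1$ maximal subgroups pairwise intersecting in $\Phi(G)$; the plan is to classify the vertices of $\Gamma_S(G)$ by their relationship to these maximals and show that any four-vertex induced subgraph acquires a chord. For necessity, assume $G$ is non-cyclic, non-quaternion with $d(G)\geq 3$, so the Burnside basis theorem gives $G/\Phi(G) \cong C_p^k$ with $k \geq 3$. In the elementary abelian case $G = C_p^k$, pass to a rank-$3$ subspace with basis $e_1, e_2, e_3$ and exhibit the induced $P_4$
\[
\langle e_1, e_2 \rangle \;-\; \langle e_3 \rangle \;-\; \langle e_2 \rangle \;-\; \langle e_1, e_3 \rangle,
\]
where the three edges come from basis independence and the three non-edges come from shared basis vectors. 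For non-elementary abelian $G$ with $d(G) \geq 3$, locate a subgroup isomorphic to $C_{p^2} \times C_p \times C_p$ using an element of order $p^2$ together with two other Frattini-basis elements of order $p$, and construct the analogous induced $P_4$
\[
\langle a^p, b \rangle \;-\; \langle c \rangle \;-\; \langle b \rangle \;-\; \langle a, c \rangle
\]
with $|a|=p^2$, $|b|=|c|=p$; then conclude by the subgroup-closed property of $\Gamma_S(\cdot)$ recorded in the introduction.

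The main obstacle is the sufficiency argument in the non-cyclic, $2$-generated case. Unlike the cyclic and quaternion cases where $\Gamma_S(G)$ is edgeless, here $\Gamma_S(G)$ is typically densely edged, and the $P_4$-freeness must be pinned down by a fine-grained analysis of how cyclic, maximal, and intermediate subgroups interact. The key lever is that any two maximal subgroups of a $2$-generated $p$-group meet in exactly $\Phi(G)$, which forces ``collapsing'' intersections of smaller subgroups into $\Phi(G)$ and thereby produces chords in any attempted $P_4$. Making this chord-production argument airtight across all possible vertex-order configurations is the technical heart of the proof.
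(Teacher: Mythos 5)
Your necessity argument follows essentially the same route as the paper (produce the explicit induced $P_4$ inside a rank-$3$ elementary abelian section), and your two easy sufficiency cases are fine. But there are two genuine problems. First, the sufficiency argument for non-cyclic $2$-generated $p$-groups --- which you yourself flag as the technical heart --- is only a plan, and it is a plan that cannot be carried out, because the claim is false for such groups. Take $G=D_8=\langle r,s\mid r^4=s^2=1,\ srs=r^{-1}\rangle$, a non-cyclic $2$-generated $2$-group. Then
\[
\langle r^2,s\rangle \;-\; \langle rs\rangle \;-\; \langle s\rangle \;-\; \langle r^2,rs\rangle
\]
is an induced $P_4$ in $\Gamma_S(D_8)$: the three consecutive intersections are trivial, while $\langle s\rangle\le\langle r^2,s\rangle$, $\langle rs\rangle\le\langle r^2,rs\rangle$, and $\langle r^2,s\rangle\cap\langle r^2,rs\rangle=\langle r^2\rangle$. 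For odd $p$ the extraspecial group of order $p^3$ and exponent $p$ gives the analogous path $\langle x,z\rangle-\langle y\rangle-\langle x\rangle-\langle y,z\rangle$ with $z=[x,y]$. So your key lever --- that any two maximal subgroups meet exactly in $\Phi(G)$ --- does not produce the needed chords: the offending $P_4$ is built from order-$p$ subgroups lying outside one another's maximal overgroups, and no case analysis will close this gap.

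Second, your necessity step assumes that $d(G)\ge 3$ forces a subgroup isomorphic to $C_p^3$ or to $C_{p^2}\times C_p\times C_p$. That is false: $G=C_2\times Q_8$ satisfies $G/\Phi(G)\cong C_2^3$, so $d(G)=3$, yet its only involutions are $(a,1),(1,-1),(a,-1)$, so it contains neither $C_2^3$ nor $C_4\times C_2\times C_2$, and your construction does not apply to it. Worse, every subgroup of $C_2\times Q_8$ of order at least $4$ contains $(1,-1)$, from which one checks that $\Gamma_S(C_2\times Q_8)$ is $P_4$-free; so this direction of the proposition fails as well. The paper's own proof shares both defects (it likewise assumes $(C_p)^3\le G$ whenever $G$ is not $2$-generated, and its converse rests on the false claim that a non-cyclic subgroup of a $2$-generated $p$-group meets every non-trivial subgroup non-trivially), so the fault lies with the statement itself; but as a proof of the proposition as stated, your proposal does not go through.
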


\begin{proof}
Let $G$ be a $p$-group such that $\Gamma_{S}(G)$ is a cograph.
To prove the theorem we consider two cases depending on $p$.\\
\textbf{Case 1.} $p$ is an odd prime\\
If $G$ is cyclic then clearly $\Gamma_{S}(G)$ is a cograph.\\
Next suppose, $G$ is non-cyclic. We claim that $G$ is a $2$-generated $p$-group. For the sake of contradiction, we assume that $G$ contains a subgroup of the form $(C_{p})^{3}$; then $\Gamma_{S}(G)$ contains a path $C_{p}\times C_{p} \times \{e\}, \{e\}\times \{e\}\times C_{p}, C_{p}\times \{e\}\times \{e\}, \{e\}\times C_{p}\times C_{p}$. Hence, we conclude that $G$ must be $2$-generated.\\
\textbf{Case 2.} $p=2$\\
If $G$ is either cyclic or generalized quaternion, then there is nothing to prove.\\
Now, let $G$ is neither cyclic nor generalized quaternion group. Then using the above argument as done in Case-1, we can conclude that $G$ must be a $2$-generated $2$-group.\\
\textbf{Converse Part:}\\
Let $G$ be a non-cyclic, $2$-generated $p$-group. We prove that $\Gamma_{S}(G)$ is a cograph. On the contrary, we suppose $\Gamma_{S}(G)$ contains a $4$-vertex induced path say $H_{1}, H_{2}, H_{3}, H_{4}$. Then each $H_{i}$ has order power of $p$ and either they are cyclic or $2$-generated. If at least one $H_{i}$ is non-cyclic then its intersection with the other vertices must be non-trivial (since, $C_{p}$ belongs to the intersection of any two $H_{i}$). Thus, each $H_{i}$ must be cyclic. In that case, $H_{1}, H_{3}$ belong to the same cyclic subgroup. On the other hand, $H_{4} \nsim H_{2}$ implies they must lie in the same cyclic subgroup. But, as $H_{2}, H_{4}\cap H_{3}=\{e\}$ so $H_{4}$ intersects $H_{1}$ trivially. Then $H_{1}\sim H_{4}$. Therefore, there does not exist any $4$-vertex induced path in $\Gamma_{S}(G)$, which implies that $\Gamma_{S}(G)$ must be a cograph in this case.
\end{proof}

\begin{theorem}
\label{th_nil_co}
Let $G$ be a finite nilpotent group which is not a $p$-group. Then $\Gamma_{S}(G)$ is a cograph if and only if  $G$ is either a cyclic group $C_{p^{a}q^{b}}$, where $a, b \geq 1$, or of the form $Q_{2^a}\times C_{q^b}$ ($a \geq 3$ and $b \geq 1$).
\end{theorem}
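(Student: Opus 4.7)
The plan is to prove both directions using the subgroup-closed property of $\Gamma_S$ together with Proposition \ref{lm_p_co}. Since $G$ is nilpotent and not a $p$-group, $G$ decomposes as the direct product of its Sylow subgroups and $|\pi(G)| \geq 2$. First I would rule out $|\pi(G)| \geq 3$ by exhibiting an explicit induced $P_4$: if $p, q, r$ are three distinct prime divisors of $|G|$ and $P_1, Q_1, R_1$ are prime-order subgroups of the corresponding Sylow factors, then the four subgroups $P_1Q_1,\ R_1,\ Q_1,\ P_1R_1$ form an induced path in $\Gamma_S(G)$; the three consecutive pairs intersect trivially (distinct prime components), while the three non-consecutive pairs share one of $Q_1,\ P_1,\ R_1$. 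Hence $|\pi(G)| = 2$, so $G = P \times Q$ with $|P| = p^a,\ |Q| = q^b$.

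Next, since $\Gamma_S(P)$ and $\Gamma_S(Q)$ are induced subgraphs of $\Gamma_S(G)$, Proposition \ref{lm_p_co} forces each of $P, Q$ to be cyclic, generalized quaternion (only permitted when the corresponding prime is $2$), or non-cyclic $2$-generated. I would then eliminate the case where either factor is non-cyclic and not generalized quaternion. The key sub-lemma is that any such $p$-group contains $C_p \times C_p$: by Kulakoff's theorem it has more than one subgroup of order $p$, and pairing an order-$p$ element from the centre with an order-$p$ element outside it produces a commuting pair generating $C_p \times C_p$. A short direct computation then shows that $\Gamma_S(C_p \times C_q \times C_q)$ contains the induced $P_4$ with vertices (in path order) $C_q \times C_q$, $C_p$, $K_1$, and $C_p \times K_2$, where $K_1 \neq K_2$ are two distinct order-$q$ subgroups of the factor $C_q \times C_q$. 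Because $\Gamma_S$ is subgroup-closed, this induced $P_4$ survives in $\Gamma_S(G)$, ruling out every configuration except the two listed in the theorem.

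For the converse I would compute $\Gamma_S(G)$ directly for the two allowed families. If $G = C_{p^a q^b}$, every non-trivial subgroup is cyclic of order $p^i q^j$; the trivial-intersection condition forces the vertex set to be exactly the non-trivial subgroups of the two cyclic Sylow factors, with edges precisely between the two sides, giving $\Gamma_S(G) \cong K_{a,b}$. If $G = Q_{2^a} \times C_{q^b}$, the unique involution of $Q_{2^a}$ together with the cyclicity of $C_{q^b}$ imply that any vertex has either trivial $2$-part or trivial $q$-part, and the graph is again complete bipartite between the non-trivial subgroups of $Q_{2^a}$ and those of $C_{q^b}$. Any path of length three in such a bipartite graph has its endpoints in opposite parts and hence adjacent, so no induced $P_4$ can occur and $\Gamma_S(G)$ is a cograph.

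The main obstacle I anticipate is the subgroup-theoretic embedding step: showing that every non-cyclic non-generalized-quaternion $p$-group truly contains $C_p \times C_p$ (the $p = 2$ case in particular requires combining Kulakoff's theorem with a centre argument to locate a commuting pair of involutions), and then verifying uniformly across the sub-cases that the $P_4$ found inside $\Gamma_S(C_p \times C_q \times C_q)$ lifts to a genuine \emph{induced} $P_4$ of $\Gamma_S(G)$ via the subgroup-closed property.
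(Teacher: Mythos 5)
Your proposal is correct and follows essentially the same strategy as the paper: rule out $|\pi(G)|\geq 3$ with an explicit induced $P_4$, then force each Sylow subgroup to be cyclic or generalized quaternion by locating a $C_p\times C_p$ (equivalently, two trivially intersecting order-$p$ subgroups) and building a $P_4$ inside $C_{p'}\times C_p\times C_p$, and finally verify the two surviving families directly. Your converse is in fact slightly cleaner than the paper's, since observing that $\Gamma_S(C_{p^aq^b})$ and $\Gamma_S(Q_{2^a}\times C_{q^b})$ are complete bipartite immediately gives $P_4$-freeness.
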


\begin{proof}
Let $G$ be a finite nilpotent group, which is not a $p$-group, such that $\Gamma_{S}(G)$ is a cograph.\\
Firstly, let $|\pi(G)|\geq 3$. Then $\Gamma_{S}(G)$ contains a $4$ vertex induced path where four vertices are $(H_{1}, H_{2}, H_{3}, H_{4})$ of orders $pr, q, p, qr$ respectively. Thus $G$ can have exactly two distinct prime divisors. \\
Let $o(G)=p^{a}q^{b}$, where $a, b \geq 1$ and $P, Q$ be the Sylow $p$ and $q$-subgroups of $G$ respectively.\\
Here we consider two cases based on $p, q$.\\
\textbf{Case 1.}both $p, q$ odd primes\\ In this case, we claim that both the Sylow subgroups of $G$ must be cyclic; otherwise if any one of $P, Q$ is non-cyclic (say $P$) then there exist two elements say $a, b$ of order $p$ and an element $c$ of order $q$ in $G$. So, $\Gamma_{S}(G)$ contains a path $(\langle ac \rangle, \langle b \rangle, \langle a \rangle, \langle bc \rangle)$. Therefore $G$ is a cyclic group $C_{p^{a}q^{b}}$, where $a, b \geq 1$.\\
\textbf{Case 2.} $p=2$ and $q$ is an odd prime\\
Clearly, the Sylow $q$-subgroup $Q$ of $G$ must be cyclic; elsewhere $\Gamma_{S}(G)$ comprises a $4$-vertex induced path. Thus, $G\cong P\times C_{q^b}$ ($b \geq 1$).\\
We claim that, $P$ is either cyclic or generalized quaternion. If possible let, $P$ be neither cyclic nor generalized quaternion group. Then $P$ has at least two distinct subgroups of order $2$. So, by the same argument as done in Case-1 we can prove that $\Gamma_{S}(G)$ contains a $4$-vertex induced path.\\
If $P$ is cyclic then $G\cong C_{2^{a}q^{b}}$.\\
Let $P$ be the generalized quaternion group $Q_{2^a}$. Then $G\cong Q_{2^a}\times C_{q^b}$, where $a\geq 3, b\geq 1$.\\
\textbf{Converse Part:} Let $G$ be the cyclic group $C_{p^{a}q^{b}}$, where $a, b \geq 1$. For the purpose of contradiction, we assume that $\Gamma_{S}(G)$ contains a $4$-vertex path say $(A, B, C, D)$. Without loss of generality, let $o(A), o(C)$ be power of $p$ and $o(B)$ be power of $q$. Then $o(D)$ must be power of $q$. Since, $G$ contains unique subgroup of each order so $A\sim D$. Hence in this case $\Gamma_{S}(G)$ must be a cograph.\\
Suppose, $G$ is the group  $Q_{2^a}\times C_{q^b}$, where $a\geq 3, b\geq 1$. We prove that $\Gamma_{S}(G)$ is a cograph. Contrarily, we assume that $\Gamma_{S}(G)$ carries a $4$-vertex induced path $A, B, C, D$. Clearly, any two subgroup of order $4$ or $4q^{k}$ or $2q^{r}$ has non-trivial intersection. Without loss of generality, let $A, C$ be two distinct subgroups of $G$ of order $4$ and $B$ be the subgroup $C_{q^b}$. So, $D$ must be $C_{q}$. But, then $A, C\sim D$. This implies that $\Gamma_{S}(G)$ does not contain any $4$-vetex induced path. On the other hand, if $b=1$ that is $G\cong Q_{2^a}\times C_{q}$ then $\Gamma_{S}(G)$ contains an induced path of length at most $2$. Thus, in any aspect , $\Gamma_{S}(G)$ is a cograph.
\end{proof}

\begin{theorem}
\label{th_sym}
$\Gamma_{S}(S_{n})$ is cograph if and only if $n=3$. Moreover, $\Gamma_{S}(A_{n})$ is cograph if and only if $n \leq 4$.
\end{theorem}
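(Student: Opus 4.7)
The plan is to treat the small cases by direct inspection of the subgroup lattice and to handle all larger $n$ uniformly via the subgroup-closed property of $\Gamma_S$ recorded in the introduction. Since $S_4\le S_n$ for $n\ge 4$ and $A_5\le A_n$ for $n\ge 5$, a single induced $P_4$ in $\Gamma_S(S_4)$ (respectively $\Gamma_S(A_5)$) will be inherited by every larger symmetric (respectively alternating) group, so it suffices to dispatch the cases $S_3, S_4, A_3, A_4, A_5$ explicitly.

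For the positive cases, the non-trivial subgroups of $S_3$ are the three transposition subgroups together with $A_3$, any two of which meet trivially, so $\Gamma_S(S_3)\cong K_4$ is a cograph. Since $A_3\cong C_3$ has no non-trivial proper subgroups, $\Gamma_S(A_3)$ is empty. For $A_4$ the vertex set consists of the three order-two subgroups of the normal $V_4$, the four Sylow $3$-subgroups, and $V_4$ itself (while $A_4$ itself fails to be a vertex, as it meets every non-trivial subgroup non-trivially). A direct check of intersections shows that the complement $\overline{\Gamma_S(A_4)}$ is a star $K_{1,3}$ with centre $V_4$ and leaves the three order-two subgroups, together with four isolated vertices corresponding to the Sylow $3$-subgroups; this complement is manifestly a cograph, hence so is $\Gamma_S(A_4)$.

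For the negative cases I would exhibit explicit induced $P_4$'s. In $\Gamma_S(S_4)$, take the two point-stabiliser copies of $S_3$, namely $H=\mathrm{Stab}_{S_4}(4)$ and $H'=\mathrm{Stab}_{S_4}(3)$, linked through two disjoint transpositions:
\[
H \;-\; \langle (14)\rangle \;-\; \langle (23)\rangle \;-\; H'.
\]
The three consecutive intersections are trivial, whereas the three required non-edges follow from $(23)\in H$, $(14)\in H'$, and the common element $(12)\in H\cap H'$. In $\Gamma_S(A_5)$, I would mirror this with two point-stabiliser $A_4$-subgroups $K=\mathrm{Stab}_{A_5}(5)$ and $K'=\mathrm{Stab}_{A_5}(4)$, glued by a pair of $3$-cycles:
\[
K \;-\; \langle (125)\rangle \;-\; \langle (124)\rangle \;-\; K'.
\]
Here $\langle(124)\rangle\subset K$ and $\langle(125)\rangle\subset K'$ supply two non-edges, while the $3$-cycle $(123)$ lies in $K\cap K'$ and supplies the third.

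The main obstacle is the book-keeping for each proposed $P_4$: one has to verify trivial intersection along the three path edges \emph{and} non-trivial intersection along the three complementary pairs simultaneously. A naive attempt using only four prime-order cyclic subgroups fails because distinct prime-order cyclics always meet trivially and thus form a $K_4$ rather than a $P_4$; the remedy in both groups is to bracket the path with two larger point-stabiliser subgroups that share a common element. Once these two explicit induced paths are verified, the subgroup-closed property of $\Gamma_S$ immediately finishes the proof for all larger $n$.
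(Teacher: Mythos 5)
Your proposal is correct and follows essentially the same strategy as the paper: settle $S_3$, $A_3$, $A_4$ by direct inspection, exhibit one explicit induced $P_4$ in each of $\Gamma_S(S_4)$ and $\Gamma_S(A_5)$, and propagate to all larger $n$ via the subgroup-closed property of $\Gamma_S$. Your point-stabiliser witnesses verify cleanly (indeed more reliably than the paper's own: its $S_4$ path starting $(\langle(13),(24)\rangle,\langle(123),(12)\rangle,\dots)$ already fails at the first edge because $(13)$ lies in both subgroups), and your complement description of $\Gamma_S(A_4)$ as $K_{1,3}$ plus isolated vertices is a tidy alternative to the paper's ad hoc path-extension argument.
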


\begin{proof}
\textbf{First Part:(symmetric group on $n$-symbols ($S_{n}$))}\\
Let $n=4$. Now, $S_{4}$ has following subgroups:\\
a) $S_{3}=\langle (1, 2, 3), (1, 2) \rangle$; b) $D_{4}=\langle (1, 2, 3, 4), (1, 3) \rangle$; c) $(C_{2})^{2}=\langle (1, 3), (2, 4) \rangle$; d) $C_{2}=\langle (2, 4) \rangle$. So, $\Gamma_{S}(G)$ contains a $4$-vertex induced path $((C_{2})^{2}, S_{3}, D_{4}, C_{2})$. Hence, $\Gamma_{S}(S_{4})$ is not a cograph.\\
Therefore, if $n \geq 4$ then $\Gamma_{S}(S_{n})$ is not a cograph. Thus, $n=3$.\\
For the converse, let $n=3$. The only non-trivial subgroup of $S_{3}$ are: $A_{3}$ and $\langle (1, 2) \rangle$, $\langle (1, 3) \rangle$, $\langle (2, 3) \rangle$. But in this case $\Gamma_{S}(S_{3})$ forms a complete graph. Hence, $\Gamma_{S}(S_{3})$ is a cograph.\\
\textbf{Second Part:(alternating group on $n$ symbols($A_{n}$))}\\
We claim that $\Gamma_{S}(A_{n})$ is not a cograph for $n \geq 5$.\\
Let $n=5$. Then $\Gamma_{S}(A_{5})$ contains a path: $(D_{5}, A_{4}, C_{5}, S_{3})$, where $D_{5}=\langle (1, 2, 3, 4, 5), (2, 5)(3, 4) \rangle, A_{4}=\langle (1, 2)(3, 4), (1, 2, 3)\rangle, C_{5}=\langle (1, 2, 3, 4, 5) \rangle, S_{3}=\langle (1, 2, 3), (1, 2)(4, 5)\rangle$. Therefore, $\Gamma_{S}(A_{5})$ is not a cograph.\\
Hence, for $n \geq 5$, $\Gamma_{S}(A_{n})$ is not a cograph.\\
Thus $n \leq 4$.\\
Converse Part: If $n=3$, then $A_{3}$ is nothing but the cyclic group $C_{3}$. Since, $\Gamma_{S}(C_{3})$ is null graph so it is a cograph.\\
Next consider the group $A_{4}$. The non-trivial subgroups of $A_{4}$ are: $C_{2}\times C_{2}, C_{2}, C_{3}$. Now the subgroups $C_{2}$ share a non-trivial intersection with the unique subgroup $C_{2}\times C_{2}$. Also, any two $C_{3}$ has trivial intersection, similar holds for any two $C_{2}$. Therefore to form a path in $\Gamma_{S}(A_{4})$, the three consecutive vertices must be $C_{2}, C_{3}, C_{2}\times C_{2}$. So, if we add another vertex then it must be either $C_{2}$ or $C_{3}$; which is adjacent to the previously added vertex. Hence $\Gamma_{S}(A_{4})$ is a cograph.
\end{proof}

\begin{theorem}
\label{th_dih}
$\Gamma_{S}(D_{n})$ is a cograph if and only if $n$ is a prime power.
\end{theorem}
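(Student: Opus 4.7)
The plan is to exploit the standard description of the subgroup lattice of $D_n=\langle r,s\mid r^n=s^2=e,\ srs=r^{-1}\rangle$: for each divisor $d$ of $n$ there is a unique cyclic rotation subgroup $\langle r^{n/d}\rangle\cong C_d$, and $n/d$ dihedral-type subgroups $\langle r^{n/d},r^i s\rangle\cong D_d$ (where $D_1\cong C_2=\langle r^i s\rangle$ denotes a reflection subgroup). Two such subgroups are adjacent in $\Gamma_S(D_n)$ exactly when their rotation parts have coprime orders as divisors of $n$ and they share no common reflection, so every adjacency question reduces to arithmetic in $\mathbb{Z}/n\mathbb{Z}$.

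For the necessity direction I would suppose $n$ has two distinct prime divisors $p<q$. Because $\Gamma_S$ is subgroup-closed by \cite{anderson}, it suffices to exhibit an induced $P_4$ inside $D_{pq}\le D_n$. My candidate path is
\[
D_p^{(a)},\ \langle r^j s\rangle,\ \langle r^k s\rangle,\ D_q^{(b)},
\]
where $D_p^{(a)}$ and $D_q^{(b)}$ are any dihedral subgroups of orders $2p$ and $2q$. The Chinese Remainder Theorem forces $D_p^{(a)}\cap D_q^{(b)}$ to contain a unique common reflection, giving the required non-edge between the endpoints; two further applications of CRT let me choose $j$ and $k$ so that $\langle r^j s\rangle\subset D_q^{(b)}\setminus D_p^{(a)}$ and $\langle r^k s\rangle\subset D_p^{(a)}\setminus D_q^{(b)}$, producing exactly the three required edges and three required non-edges. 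This construction then propagates to any $n$ having $pq$ as a divisor.

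For the converse direction, suppose $n=p^k$. The vertices of $\Gamma_S(D_{p^k})$ split into three families: rotations $C_{p^i}$ for $1\le i\le k$, intermediate dihedrals $D_{p^j}$ for $1\le j\le k-1$, and reflection subgroups $D_1$. Several adjacency rules are immediate---any two rotations are nested, any two intermediate dihedrals share $C_p$, every rotation is adjacent to every reflection but non-adjacent to every intermediate dihedral, and two distinct reflections are always adjacent---and I would then try to rule out an induced $P_4$ by case analysis on the type pattern of its four vertices, using the fact that the reflections contained in a given $D_{p^j}$ form a single residue class modulo $n/p^j$. The main obstacle is the case where the path has the shape $D_{p^{j_1}}-D_1-D_1'-D_{p^{j_2}}$, with two intermediate dihedrals at the endpoints: there the three required non-adjacencies translate into simultaneous coset containment conditions in $\mathbb{Z}/p^k\mathbb{Z}$, and showing that no common solution exists is the step that must genuinely use the prime-power hypothesis, while the remaining type patterns collapse quickly from the basic adjacency rules.
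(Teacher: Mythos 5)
Your necessity argument is correct and is essentially the paper's own, done more carefully: in $D_{pq}$ the two endpoints $D_p^{(a)}=\langle r^q,r^as\rangle$ and $D_q^{(b)}=\langle r^p,r^bs\rangle$ meet in exactly one reflection by CRT, and choosing $j\equiv b\ (\mathrm{mod}\ p)$, $j\not\equiv a\ (\mathrm{mod}\ q)$ and $k\equiv a\ (\mathrm{mod}\ q)$, $k\not\equiv b\ (\mathrm{mod}\ p)$ gives all six required adjacency conditions, after which subgroup-closure lifts the $P_4$ to $D_n$.

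The converse, however, cannot be completed, because the configuration you single out as the ``main obstacle'' genuinely occurs. Take $n=p^k$ with $k\ge 2$ and set $H_1=\langle r^{p^{k-1}},s\rangle$, $H_2=\langle r^{p^{k-1}},rs\rangle$, two dihedral subgroups of order $2p$. Their rotation parts coincide, so $H_1\cap H_2\supseteq\langle r^{p^{k-1}}\rangle$ and $H_1\nsim H_2$; their reflections lie in the residue classes $0$ and $1$ modulo $p^{k-1}$, which are distinct because $p^{k-1}\ge 2$, so $s\in H_1\setminus H_2$ and $rs\in H_2\setminus H_1$. Then $H_1,\ \langle rs\rangle,\ \langle s\rangle,\ H_2$ is an induced $P_4$: the three consecutive intersections are trivial, while $H_1\cap\langle s\rangle$, $\langle rs\rangle\cap H_2$ and $H_1\cap H_2$ are not. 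Concretely, in $D_4$ the two Klein four-subgroups $\langle r^2,s\rangle$ and $\langle r^2,rs\rangle$ together with $\langle rs\rangle$ and $\langle s\rangle$ already form such a path, and the same works in $D_9$ with the two copies of $S_3$. So the ``simultaneous coset containment conditions'' you hoped to prove inconsistent are satisfiable for every prime power $p^k$ with $k\ge 2$, and the theorem as stated is false: $\Gamma_{S}(D_n)$ is a cograph precisely when $n=1$ or $n$ is prime. The paper's own converse breaks at the same spot --- it asserts that any two subgroups of $D_{p^r}$ share a power of $a$, which fails as soon as one of them is a reflection subgroup $\langle a^ib\rangle$, and its conclusion that any three vertices form a triangle is likewise wrong. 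Your instinct that all the content of the converse sits in the $D_{p^{j_1}}$--$D_1$--$D_1'$--$D_{p^{j_2}}$ pattern was exactly right; the resolution is just the opposite of what both you and the paper expected.
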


\begin{proof}
Firstly, suppose $\Gamma_{S}(D_{n})$ is a cograph.\\
If $n$ is divisible by $3$ or more primes, then $G$ contains a subgroup $C_{n}$. But $\Gamma_{S}(G)$ is not a cograph. Hence, $n$ can have at most $2$ distinct prime divisors.\\
If $n=p^{k}q^{r}$. Then $\Gamma_{S}(D_{n})$ contains a path $(\langle a^{q}, ab \rangle, \langle a^{p}, b \rangle, \langle a^{q} \rangle, \langle a^{p}, ab \rangle)$. Therefore, $n$ must be a prime power.\\
Conversely, let $n=p^{r}$(where, $p$ is a prime and $r \geq 1$).
Since, $D_{n}$ consists two types of subgroups: \\
Type-I: $\langle a^{d} \rangle$, where $d|n$;\\
Type-II: $\langle a^{d}, a^{i}b \rangle$, where $d|n$ and $0\leq i \leq (d-1)$.\\
Since, $n$ is power of $p$ so $d$ must be power of $p$ and then any two subgroups of Type-I and Type-II share an element of the form $a^{p^k}$ in common. Similarly, a subgroup of Type-I and Type-II share non-trivial intersection. Hence, any $3$ vertices of $\Gamma_{S}(D_{n})$ form a triangle. Thus, there exists no $4$-vertex induced path in $\Gamma_{S}(D_{n})$. This makes $\Gamma_{S}(D_{n})$ a cograph.
\end{proof}


\section{Chordal Graph}
A graph $\Gamma$ is said to be chordal if it does not contain any induced cycles of length greater than $3$; in other words, every cycle on more than $3$ vertices has a chord.\\
In this section, we examine the finite nilpotent groups, solvable groups, the symmetric group $S_{n}$, the alternating group $A_{n}$ and the dihedral groups $D_{n}$ whose intersection subgroup graph is a chordal
graph.
\begin{proposition}
Let $G$ be a finite abelian $p$-group. Then $\Gamma_{S}(G)$ is chordal if and only if $G$ is either a cyclic $p$-group or $C_{p^k}\times C_{p}$ (where, $k \geq 1$) or $C_{p}\times C_{p}\times C_{p}$.
\end{proposition}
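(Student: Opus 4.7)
The plan is to prove both directions, using the subgroup-closed property of $\Gamma_S$ from Anderson et al.\ to control the only-if direction via a few minimal obstructions.

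For the if direction I would treat the three families separately. For a cyclic $p$-group, every two non-trivial subgroups are nested, so no non-trivial subgroup has a trivial-intersection partner; hence $\Gamma_S(G)$ has empty vertex set and is vacuously chordal. For $G\cong C_{p^k}\times C_p$ with $k\geq 2$ the decisive structural fact is that the unique order-$p$ subgroup $N:=\langle(p^{k-1},0)\rangle$ of the $C_{p^k}$ factor is contained in every cyclic subgroup of order $\geq p^2$ (obtained as the $p^{i-1}$-multiple of any generator of order $p^i$), while the full socle $C_p\times C_p$ is contained in every non-cyclic subgroup of $G$ (which necessarily has rank $2$). Consequently non-cyclic subgroups are not vertices of $\Gamma_S(G)$ at all, any two cyclic subgroups of order $\geq p^2$ are non-adjacent (they share $N$), and the vertex set splits into a clique $A$ on the $p+1$ order-$p$ subgroups together with an independent set $B$ of cyclic subgroups of order $\geq p^2$, each $b\in B$ adjacent precisely to $A\setminus\{N\}$. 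In an induced cycle of length $\geq 4$ visiting some $b\in B$, the two cycle-neighbours of $b$ lie in $A$ and are therefore adjacent, producing a chord; cycles confined to $A$ already have many chords. For $G\cong C_p^3$ I would use the same pattern: viewing $G$ as $\mathbb{F}_p^3$, the $1$-dimensional subspaces form a clique and the $2$-dimensional subspaces form an independent set (any two $2$-spaces meet in a line), and any induced cycle of length $\geq 4$ again admits a chord through the clique-neighbours of any $2$-dimensional vertex.

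For the only-if direction, write $G\cong C_{p^{k_1}}\times\cdots\times C_{p^{k_r}}$ in invariant-factor form with $k_1\geq\cdots\geq k_r\geq 1$. If $G$ is not among the listed groups then exactly one of the following three cases holds: $r\geq 4$ (so $G\supseteq C_p^4$); or $r=2$ with $k_2\geq 2$ (so $G\supseteq C_{p^2}\times C_{p^2}$); or $r=3$ with $k_1\geq 2$ (so $G\supseteq C_{p^2}\times C_p\times C_p$). For each of the three minimal obstructions I would exhibit an explicit induced $4$-cycle: in $C_p^4$ the subgroups $\langle e_1,e_2\rangle,\,\langle e_4\rangle,\,\langle e_1,e_3\rangle,\,\langle e_4,e_2+e_3\rangle$ with ``diagonals'' $\langle e_1\rangle$ and $\langle e_4\rangle$; in $C_{p^2}\times C_{p^2}$ the cyclic subgroups $\langle(1,0)\rangle,\,\langle(0,1)\rangle,\,\langle(1,p)\rangle,\,\langle(p,1)\rangle$ with diagonals $\langle(p,0)\rangle$ and $\langle(0,p)\rangle$; and in $C_{p^2}\times C_p\times C_p$ the subgroups $\langle(1,0,0)\rangle,\,\langle(0,0,1)\rangle,\,\langle(1,1,0)\rangle,\,\langle(0,1,0),(0,0,1)\rangle$. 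The subgroup-closed property then transfers each such induced $C_4$ into $\Gamma_S(G)$, proving non-chordality.

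The main obstacle is the structural analysis for $\Gamma_S(C_{p^k}\times C_p)$: one must rule out that some induced cycle of length $\geq 4$ slips through the clique/independent-set dichotomy, and this rests on the unique-minimal-subgroup observation in the $C_{p^k}$ factor combined with the fact that only cyclic subgroups survive as vertices. A secondary technical point is the verification that each of the three exhibited $4$-cycles is genuinely induced uniformly in $p$, in particular at $p=2$, which amounts to a short but case-sensitive computation of the four consecutive intersections (trivial) and the two diagonal intersections (non-trivial).
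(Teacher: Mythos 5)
Your proposal is correct, and both directions follow the same overall strategy as the paper: exhibit explicit induced $4$-cycles for every abelian $p$-group outside the list (transferred into $\Gamma_S(G)$ by the subgroup-closed property), and verify directly that the listed groups admit no induced cycle of length at least $4$. I checked your three witness $4$-cycles in $C_p^4$, $C_{p^2}\times C_{p^2}$ and $C_{p^2}\times C_p\times C_p$; each is genuinely induced for every prime $p$, and your trichotomy on the invariant-factor rank $r$ is exhaustive, so the only-if direction matches the paper's (which builds essentially the same cycles inside $G$ itself rather than inside minimal obstruction subgroups). Where you genuinely diverge is the converse: the paper argues by choosing three vertices ``without loss of generality'' and checking that no fourth vertex closes a cycle, which is the least rigorous part of its proof, whereas you establish the structural facts that in $C_{p^k}\times C_p$ the non-cyclic subgroups are not vertices at all, the order-$p$ subgroups form a clique, and the larger cyclic subgroups form an independent set each of whose neighbourhoods lies in that clique (and analogously lines versus planes in $C_p^3$). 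This immediately forces a chord through the two neighbours of any vertex of the independent set on a putative long cycle, and it disposes of all cycle lengths at once rather than case by case; it is a cleaner and more airtight route to the same conclusion.
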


\begin{proof}
It is known that, if $G$ is a cyclic $p$-group then $\Gamma_{S}(G)$ is a null graph and hence chordal.\\
So, let $G$ be a finite abelian (non-cyclic) $p$-group such that $\Gamma_{S}(G)$ is chordal.\\
Let $G\cong C_{p^{k_1}}\times C_{p^{k_2}}\times \cdots \times C_{p^{k_r}}$. Let there exist two $k_{i}>1$ (for $1 \leq i \leq r$); say $k_{1}, k_{2}>1$. Then $\Gamma_{S}(G)$ contains a $4$-vertex cycle as $(A, B, C, D, A)$ where $A=C_{p^{k_1}}\times \{e\}\times \cdots \times \{e\}$, $B=\{e\}\times C_{p^{k_{2}}}\times \cdots \times \{e\}$, $C=C_{p}\times \{e\}\times \cdots \times \{e\}$, $D=\{e\}\times C_{p}\times \cdots \times \{e\}$. Hence, $G\cong C_{p^{k}}\times C_{p}\times \cdots \times C_{p}$, where $k \geq 1$.\\
If $G$ is the product of two copies of cyclic $p$-groups then $G$ must be $C_{p^k}\times 
C_{p}$.\\
Let $G$ is product of $3$ or more copies of cyclic $p$-groups. Then $G\cong C_{p^k}\times C_{p}\times C_{p}\times \cdots \times C_{p}$. If $k>1$ then $\Gamma_{S}(G)$ contains a cycle $(A, B, C, D, A)$ where $A=C_{p^k}\times \{e\}\cdots \times \{e\}, B=\{e\}\times C_{p}\times \cdots \times C_{p}, C=C_{p}\times \{e\}\times \cdots \times \{e\}, D=\{e\}\times C_{p}\times \{e\}\times \cdots \times \{e\}$. Hence, $G$ is $(C_{p})^{r}$, where $r \geq 3$.\\
In a similar manner, we can show that if $r \geq 4$ then $\Gamma_{S}(G)$ contains a cycle of length $4$. Therefore, $r=3$ and $G\cong C_{p}\times C_{p}\times C_{p}$.\\
Conversely, 
let $G$ be $C_{p^k}\times C_{p}$. If possible let, $\Gamma_{S}(G)$ contain a cycle $(A, B, C, D,..., A)$. Without loss of generality, let $A=C_{p^k}\times \{e\}, B=\{e\}\times C_{p}, C=C_{p}\times \{e\}$. Then for any choices of $D$, it shares a non-trivial intersection with $A, C$. So such cycle does not exist. Hence, $\Gamma_{S}(G)$ is chordal.\\
Let $G\cong C_{p}\times C_{p}\times C_{p}$. Suppose $\Gamma_{S}(G)$ contains a cycle $(A, B, C, D, A)$. Without loss of generality we take $A, B, C$ as $C_{p}\times \{e\}\times C_{p}, \{e\}\times C_{p}\times \{e\}, \{e\}\times \{e\}\times C_{p}$. Then $D$ will be $C_{p}\times C_{p}\times \{e\}$, which is not adjacent to $A$. Therefore, such $4$-cycle does not exist. Similarly, we can show that $\Gamma_{S}(G)$ does not contain any cycle of length $5$ and above. Hence, $\Gamma_{S}(G)$ is chordal
\end{proof}

\begin{proposition}
Let $G$ be a finite non-abelian $p$-group such that $\Gamma_{S}(G)$ is chordal. Then $G$ has the following possibilities:\\
a) a $2$-generated $p$-group such that $G$ is either generalized quaternion or $G$ does not contain any subgroup of the form $C_{p^k}\times C_{p^r}$ where $k, r>1$;\\
b) a $3$-generated $p$-group that does not contain any subgroup of the form $C_{p^k}\times C_{p}\times C_{p}$, where $k>1$.
\end{proposition}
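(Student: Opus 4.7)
The plan is to combine the subgroup-closed property of $\Gamma_S$ with the preceding classification of abelian $p$-groups. Since $\Gamma_S(G)$ is chordal and chordality is inherited by induced subgraphs, $\Gamma_S(A)$ is chordal for every abelian subgroup $A\leq G$. Applying the previous proposition to each such $A$, I conclude that $A$ must be one of the permitted types: cyclic $C_{p^n}$, $C_{p^n}\times C_p$ with $n\geq 1$, or $C_p\times C_p\times C_p$. Consequently, $G$ contains no subgroup of the forms (i) $(C_p)^r$ with $r\geq 4$, (ii) $C_{p^k}\times C_{p^r}$ with $k,r>1$, or (iii) $C_{p^k}\times C_p\times C_p$ with $k>1$. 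Once we can show that the minimal number of generators $d(G)\leq 3$, the two cases of the proposition fall out directly from this exclusion list.

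The crux is therefore to establish $d(G)\leq 3$. Suppose for contradiction $d(G)\geq 4$ and pick $x_1,x_2,x_3,x_4\in G$ whose images are $\mathbb{F}_p$-linearly independent in the Frattini quotient $G/\Phi(G)$. I would consider the four subgroups
\[ A_1=\langle x_1\rangle,\quad A_2=\langle x_3\rangle,\quad A_3=\langle x_1,x_2\rangle,\quad A_4=\langle x_3,x_4\rangle. \]
Frattini-level independence forces $A_1\cap A_3\supseteq\langle x_1\rangle$ and $A_2\cap A_4\supseteq\langle x_3\rangle$ (the two desired non-edges), and forces each cross-intersection $A_1\cap A_2$, $A_2\cap A_3$, $A_3\cap A_4$, $A_4\cap A_1$ to lie inside $\Phi(G)$. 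If these cross-intersections can be made trivial by refining the choice of $x_i$ within their Frattini cosets, then $(A_1,A_2,A_3,A_4)$ is an induced $4$-cycle in $\Gamma_S(G)$, contradicting chordality. The template is provided by the extra-special group $p^{1+4}$ of exponent $p$: with standard symplectic generators $a_1,a_2,b_1,b_2$ satisfying $[a_i,b_j]=c^{\delta_{ij}}$, the choices $A_1=\langle a_1\rangle$, $A_2=\langle b_1\rangle$, $A_3=\langle a_1,a_2\rangle$, $A_4=\langle b_1,b_2\rangle$ yield the $4$-cycle immediately, because $\Phi(G)=Z(G)=\langle c\rangle$ is disjoint from both $\langle a_1,a_2\rangle$ and $\langle b_1,b_2\rangle$.

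The principal obstacle is handling the general case where $\Phi(G)$ is non-trivial on the cyclic subgroups $\langle x_i\rangle$: the cross-intersections $A_i\cap A_j$ could be non-trivial subgroups of $\Phi(G)$ such as $\langle x_i^p\rangle\cap\langle x_j^p\rangle$. To overcome this, I would either exploit Step 1 to limit possible overlaps inside $\Phi(G)$ (since any such overlap forces abelian subgroups excluded by our list), or pass to a minimal counterexample whose Frattini subgroup has simpler structure (e.g.\ of exponent $p$), reducing to the extra-special template above. Once $d(G)\leq 3$ is secured, case (a) follows because a $2$-generated $G$ which is not generalized quaternion cannot contain $C_{p^k}\times C_{p^r}$ with $k,r>1$ by exclusion (ii), while case (b) follows because a $3$-generated $G$ cannot contain $C_{p^k}\times C_p\times C_p$ with $k>1$ by exclusion (iii).
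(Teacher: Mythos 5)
Your Step 1 is sound and is essentially the paper's route to the ``forbidden subgroup'' clauses: chordality passes to induced subgraphs, $\Gamma_S$ is subgroup-closed, so the preceding proposition on abelian $p$-groups immediately rules out $(C_p)^4$, $C_{p^k}\times C_{p^r}$ with $k,r>1$, and $C_{p^k}\times C_p\times C_p$ with $k>1$ as subgroups of $G$. (The paper instead re-exhibits the $4$-cycles inside these abelian subgroups, but the content is the same.) Given a bound $d(G)\le 3$, cases a) and b) then follow exactly as you say.

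The genuine gap is that you never establish $d(G)\le 3$, and you correctly identify this as the crux without closing it. Your two fallback strategies do not obviously work. First, ``exploiting Step 1'' cannot suffice on its own: the extraspecial group $p^{1+4}$ of exponent $p$ ($p$ odd) contains no $(C_p)^4$, no $C_{p^k}\times C_{p^r}$ with $k,r>1$, and no $C_{p^k}\times C_p\times C_p$ with $k>1$ --- every abelian subgroup is on the permitted list --- yet it is $4$-generated. So the abelian-subgroup data alone does not bound the generator number, and a genuinely non-abelian $4$-cycle construction is unavoidable; you produce one only for this single extraspecial template. Second, the ``minimal counterexample'' reduction to a group whose Frattini subgroup is well-behaved is asserted, not argued: you would need to show that the cross-intersections $\langle x_1\rangle\cap\langle x_3\rangle$ and $\langle x_1,x_2\rangle\cap\langle x_3,x_4\rangle$, which a priori sit anywhere in $\Phi(G)$, can always be killed by re-choosing representatives, and no mechanism for this is given (for instance, when all the $x_i^p$ generate the same subgroup of order $p$, every pair of the cyclic groups $\langle x_i\rangle$ meets non-trivially and your quadruple degenerates). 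For what it is worth, the paper's own proof of this step is also thin --- it deduces the generator bound from the presence of a $(C_p)^4$ subgroup, tacitly identifying ``$4$-generated'' with ``contains $(C_p)^4$,'' which fails for exactly the extraspecial examples above --- so you have located the real difficulty accurately, but your proposal does not resolve it, and the proposition cannot be considered proved from what you have written.
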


\begin{proof}
Let $G$ be a non-abelian $p$-group such that $\Gamma_{S}(G)$ is chordal.\\
If $G$ is a generalized quaternion group, then there is nothing to prove.\\
Now, let $G$ is not a generalized quaternion group.\\
We claim that $G$ cannot be generated by $4$ and above elements.\\
Suppose, $G$ contains a subgroup of the form $(C_{p})^{4}$. Then $\Gamma_{S}(G)$ contains a cycle $C_{p}\times \{e\}\times C_{p} \times\{e\}, \{e\}\times C_{p}\times \{e\}\times \{e\}, C_{p}\times \{e\}\times \{e\}\times \{e\}, \{e\}\times C_{p}\times C_{p}\times \{e\}\times C_{p}\times \{e\}\times C_{p} \times\{e\}$. Hence, we reach the conclusion that $G$ is either a $2$-generated or a $3$-generated $p$-group.\\
Let, $G$ be a $2$-generated $p$-group.\\
It is clear that if $G$ contains a subgroup of the form $C_{p^k}\times (C_{p})^{r}$, where $r \geq 2$, then $\Gamma_{S}(G)$ contains a $4$-vertex cycle. Hence we conclude $G$ is a group of the form a).\\
Next suppose, $G$ is $3$-generated. If $G$ contains a subgroup of the form $C_{p^k}\times C_{p}\times C_{p}$. Then $\Gamma_{S}(G)$ contains a $4$-vertex cycle. Thus we get the conclusion b).
\end{proof}

In the above discussions we are able to characterize the all finite abelian $p$-groups such that $\Gamma_{S}(G)$ is chordal. But if $G$ is a non-abelian $p$-group we explore the structural properties of the groups such that $\Gamma_{S}(G)$ is chordal. Based on that, we pose a problem:
\begin{problem}
Characterize all the $p$-groups such that $\Gamma_{S}(G)$ is chordal.
\end{problem}

\begin{theorem}
Let $G$ be a finite nilpotent group which is other than $p$-group. Then $\Gamma_{S}(G)$ is chordal if and only if $G$ is one of the following:\\
a)If $|\pi(G)|=2$, $G$ is either a cyclic group $C_{{p^k}q}$ for $k \geq 1$ or $Q_{2^a}\times C_{q}$ ($a\geq 3$) or $G$ is non-cyclic group with one Sylow subgroup is cyclic and other is a $2$-generated group of prime exponent;\\
b)If $|\pi(G)|=3$, $G\cong C_{pqr}$.
\end{theorem}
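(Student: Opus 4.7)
The plan is to use the Sylow decomposition $G=P_1\times\cdots\times P_k$: since the Sylow orders are pairwise coprime, every subgroup of $G$ factors as a product of subgroups of the $P_i$, and two such product subgroups are adjacent in $\Gamma_S(G)$ iff in each Sylow slot at least one factor is trivial. The strategy is then to construct an induced $4$-cycle whenever $G$ fails to lie in the listed families, and to verify chordality on each listed family by direct inspection of its subgroup lattice.

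First I would show $|\pi(G)|\le 3$: given four distinct primes $p,q,r,s$ dividing $|G|$, the quadruple $(C_p,\,C_r,\,C_pC_q,\,C_rC_s)$ is an induced $4$-cycle in $\Gamma_S(G)$, with the required non-edges supplied by $C_p\subset C_pC_q$ and $C_r\subset C_rC_s$. For $|\pi(G)|=3$ with primes $p<q<r$, I would rule out everything except $C_{pqr}$. In the cyclic case $G=C_{p^aq^br^c}$ with some exponent $\ge 2$ (say $a\ge 2$), the quadruple $(C_{p^a},\,C_{qr},\,C_p,\,C_q)$ is an induced $C_4$; in the non-cyclic case, if some Sylow subgroup $P$ is non-cyclic and not generalized quaternion, three distinct $C_p$-subgroups $P_1,P_2,P_3\le P$ give the induced $C_4$ $(P_1C_q,\,C_r,\,P_2C_q,\,P_3C_r)$, while if $P=Q_{2^a}$, two distinct non-trivial subgroups $H_1,H_2\le Q_{2^a}$ give $(H_1,\,C_r,\,H_2,\,C_qC_r)$ with $H_1\cap H_2\supseteq C_2$ closing the diagonal. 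The converse, that $\Gamma_S(C_{pqr})$ is chordal, is immediate from the graph, which is the triangle $\{C_p,C_q,C_r\}$ with three pendants $C_{qr},C_{pr},C_{pq}$.

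For $|\pi(G)|=2$ with $G=P\times Q$ and primes $p<q$, I would similarly obstruct the disallowed cases. If both Sylow subgroups are cyclic with $a,b\ge 2$, the quadruple $(C_{p^a},\,C_q,\,C_p,\,C_{q^b})$ is an induced $C_4$, so one exponent is $1$ and $G=C_{p^kq}$. If $P$ is non-cyclic and $Q=C_{q^b}$ with $b\ge 2$, then for any proper non-trivial $P_1\le P$ the quadruple $(P_1,\,C_q,\,P,\,C_{q^2})$ is an induced $C_4$, forcing $Q=C_q$; then if $P=Q_{2^a}$ nothing more is needed (giving $G=Q_{2^a}\times C_q$), while if $P$ is non-cyclic and not generalized quaternion, further obstructions from subgroups $C_{p^2}\times C_p$ and $(C_p)^3$ of $P$ force $P$ to be a $2$-generated $p$-group of exponent $p$. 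Converse chordality is checked family-by-family: $\Gamma_S(C_{p^kq})$ and $\Gamma_S(Q_{2^a}\times C_q)$ are stars centered at $C_q$, and for $\Gamma_S(P\times C_q)$ a type-based case analysis on the $C_q$-slot of four candidate cycle vertices rules out any induced $C_4$.

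The main obstacle is the converse direction for $P\times C_q$ when $P$ is a non-abelian $2$-generated $p$-group of exponent $p$ (such as the Heisenberg group of order $p^3$), where the subgroup lattice of $P$ is richer than in the elementary-abelian case. The key observation to close this case is that any induced $4$-cycle must alternate between vertices with trivial $C_q$-slot and vertices with $C_q$-slot equal to $C_q$, since any two vertices both containing $C_q$ are automatically non-adjacent; running through the possible type patterns around the cycle then shows each configuration collapses, because either two putative corners are forced to coincide or a claimed non-edge is forced to be an edge by the trivial intersection of distinct $C_p$-subgroups of $P$.
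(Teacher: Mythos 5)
Your overall strategy (building induced $4$-cycles from products of Sylow-subgroup pieces to exclude groups, then checking each listed family) is the same as the paper's, and most of your cycle constructions are correct; your observation that $\Gamma_S(C_{p^kq})$ and $\Gamma_S(Q_{2^a}\times C_q)$ are stars centered at $C_q$ is in fact cleaner than the paper's converse argument for those cases. However, there is a genuine gap in the necessity direction for $|\pi(G)|=2$. You claim that if $P$ is non-cyclic, not generalized quaternion, and not of exponent $p$, then the obstruction comes from a subgroup $C_{p^2}\times C_p$ of $P$. Such a subgroup need not exist: the dihedral group of order $8$ and the non-abelian groups of order $p^3$ and exponent $p^2$ are non-cyclic, not generalized quaternion, have exponent $p^2$, and contain no copy of $C_{p^2}\times C_p$ (an abelian subgroup of order $p^3$ would be the whole group). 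So your argument does not exclude, for example, $G=D_4\times C_q$ or $M_{p^3}\times C_q$, which the theorem must exclude. The correct obstruction, as in the paper, uses a cyclic subgroup $\langle x\rangle$ of order $p^2$ together with $\langle x^p\rangle$ and a second subgroup $\langle b\rangle$ of order $p$ with $\langle b\rangle\neq\langle x^p\rangle$ (which exists precisely because $P$ is neither cyclic nor generalized quaternion): then $(\langle x^p\rangle,\ \langle bc\rangle,\ \langle x\rangle,\ \langle c\rangle)$, with $c$ of order $q$, is an induced $C_4$.

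Two smaller points. First, your case enumeration for $|\pi(G)|=2$ omits the possibility that both Sylow subgroups are non-cyclic; this is easily repaired, since $(P_1,\ Q_1,\ P,\ Q)$ with $P_1<P$, $Q_1<Q$ proper non-trivial is an induced $C_4$, but it should be said. Second, in the converse for $P\times C_q$ with $P$ non-abelian of exponent $p$, your ``alternation'' observation only shows that the vertices containing $C_q$ form an independent set in the putative $4$-cycle, so there are at most two of them sitting opposite each other; the configurations with zero or one such vertex (in particular, a $4$-cycle lying entirely inside $\Gamma_S(P)$) are not covered by that remark and must be ruled out separately, e.g.\ by noting that any subgroup of $P$ of order at least $p^2$ contains every $\langle x\rangle$ with... rather, by a direct argument on the subgroup lattice of a $2$-generated exponent-$p$ group showing that two non-adjacent vertices of order $p$ must coincide. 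Neither of these is fatal, but the $C_{p^2}\times C_p$ step is a wrong reduction and needs to be replaced.
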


\begin{proof}
Suppose, $G$ is a finite nilpotent group such that $|\pi(G)| \geq 4$. First we observe that if $pqrs|o(G)$ (where $p<q<r<s$) then $\Gamma_{S}(G)$ contains an induced $4$-cycle with vertices say $(H_{1}, H_{2}, H_{3}, H_{4}, H_{1})$, where $o(H_{1})=p, o(H_{2})=r ,o(H_{3})= ps, o(H_{4})=qr $. Therefore, $|\pi(G)|\leq 3$.\\
\textbf{Case 1.} $|\pi(G)|=3$\\
Clearly, $o(G)=p^{l}q^{m}r^{n}$. If any one of $l, m, n$ is greater than $1$, then $\Gamma_{S}(G)$ contains an induced $4$-cycle: $(A, B, C, D, A)$ with $o(A)=p^{l}, o(B)=q, o(C)=p, o(D)=qr$ and $C\subset A$. Thus, $o(G)=pqr$ and hence $G\cong C_{pqr}$.\\
\textbf{Case 2.} $|\pi(G)|=2$\\
Let $o(G)=p^{k}q^{r}$. Consider the Sylow $p$ and $q$-subgroups of $G$ as $P, Q$ respectively. We claim that at least one of $k, r$ is $1$; elsewhere there exists a $4$-cycle with vertices as $(P, A, B, Q, P)$ where $B\subset P$ and $A\subset Q$. Let $r=1$ and hence $o(G)=p^{k}q$. Our next claim is that $P$ is either cyclic or generalized quaternion or a non-cyclic $p$-group of exponent $p$. \\
If $P$ is cyclic then $G$ is cyclic.\\
If $P$ is generalized quaternion group $Q_{2^a}$, where $a \geq 3$, then $G \cong Q_{2^a}\times C_{q}$.\\
Let $P$ be neither cyclic nor generalized quaternion. Then $G$ contains two distinct elements say $a, b$ of order $p$. If $P$ contains an element say $x$ of order more than $p$, then 
there exist two subgroups say $H, K$ in $G$ such that $H=\langle x \rangle, K=\langle x^{p}\rangle$ (where, $a=x^{p}$). In this case $\Gamma_{S}(G)$ contains a cycle $K,  L, H, Q, K$ where $L=\langle bc \rangle$ and $Q=\langle c \rangle$. Hence $P$ must be a $p$-group of exponent $p$. \\
Next we observe that $P$ must be $2$-generated group; elsewhere $G$ will contains a subgroup $G'=C_{p}\times C_{p}\times C_{p}\times C_{q}$, such that $\Gamma_{S}(G')$ contains a $4$-vertex induced cycle. Hence in this case $\Gamma_{S}(G)$ is not chordal.\\
\textbf{Converse part:}\\  i) Let $G\cong C_{p^{k}q}$, where $k \geq 1$. Now we prove that $\Gamma_{S}(G)$ is chordal. \\
If not, let $\Gamma_{S}(G)$ contains a cycle of length $4$ and above. It is clear that every non-trivial subgroup is of order either power of $p$, or of the form $p^{i}q$ where $0\leq i \leq k-1$. Since $G$ has unique subgroup of each order, so if we try to construct a cycle of length $4$ and above then there must exist at least two subgroups of order power of $p$ and two subgroups of order of the form $p^{i}q$. But then we are unable to make a cycle. Hence in this case $\Gamma_{S}(G)$ is chordal.\\
ii) Let $o(G)=p^{k}q=P\times Q$ such that the Sylow $p$-subgroup $P$ of $G$ is a $2$-generated $p$-group of exponent $p$ and the Sylow $q$-subgroup $Q$ is cyclic. If possible let, $\Gamma_{S}(G)$ contains a cycle of length $4$ and above. Since $P$ is non-cyclic, then $P$ has at least two distinct elements say $a, b$ of order $p$. Consider the subgroups $P_{1}=\langle a \rangle, P_{2}=\langle b \rangle$. Without loss of generality we choose three vertices of a cycle as $P_{2}\times Q, P_{1}, Q$ then the next vertex will be a subgroup must contain $P_{1}$. But as $P$ is non-cyclic and of exponent $p$ so any subgroup of order more than $p$ must contain both $P_{1}, P_{2}$. Thus the only possibility for the next vertex is $P$. So it cannot be adjacent to $P_{2}\times Q$. Similarly if we check for higher length we cannot form a cycle. Hence in this case $\Gamma_{S}(G)$ is chordal.\\
iii) Let, $G\cong Q_{2^a}\times C_{q}$. Now by Theorem \ref{th_nil_co}, $\Gamma_{S}(G)$ is cograph. So, for the chordality we have to check only the existence of a $4$-cycle in $\Gamma_{S}(G)$. Suppose that $\Gamma_{S}(G)$ carries a $4$-vertex induced cycle $A, B, C, D, A$. Clearly, any two subgroups of order $4$ or $4q$ or $2q$ share a non-trivial intersection. Without loss of generality, let $A, C$ be two distinct subgroups of $G$ of order $4$ and $B$ be the subgroup $C_{q}$. Then, for any choices of $D$ we must have either $D\nsim C, A$ or $D\cong B$. Hence, such $4$-cycle does not exist in $\Gamma_{S}(G)$, which proves that $\Gamma_{S}(G)$ is chordal.\\
iv) If $G\cong C_{pqr}$ then any non-trivial subgroup posseses orders $\{p, q, r, pr, pq, qr\}$. Also, $G$ contains unique subgroup of each order. Without loss of generality, we choose consecutive $4$ vertices of a cycle as $A, B, C, D$ of respective orders $qr, p, q, pr$. So if we add another vertex, it must be of order $pq$ or $r$. But for any of these choices, that vertex is not adjacent to $D$. Thus $\Gamma_{S}(G)$ does not contain a cycle of length $5$ and above. Similarly it does not contain any cycle of length $4$. Hence, $\Gamma_{S}(G)$ is chordal.
\end{proof}

\begin{theorem}
Let $G$ be a solvable group with $|\pi(G)|\geq 2$. Then $\Gamma_{S}(G)$ is chordal graph if $G$ is either a group of order $p_{1}p_{2}p_{3}$, where $p_{1}<p_{2}<p_{3}$ or $G$ has order $p_{1}^{k}p_{2}$, where $k\geq1$.
\end{theorem}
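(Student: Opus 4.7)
The plan is to prove chordality by contradiction, showing that $\Gamma_S(G)$ contains no induced cycle of length at least $4$ in each of the two listed cases. Since $G$ is solvable, P.~Hall's theorem guarantees a Hall $\pi$-subgroup for every $\pi\subseteq\pi(G)$, and combined with Sylow's theorem this gives tight control over the orders of subgroups of $G$. The high-level strategy is to classify the non-trivial subgroups of $G$ by the set of primes dividing their order (their ``colour''), and to argue that in any attempted induced cycle of length $\ge 4$ the alternating trivial/non-trivial intersection pattern forces a pair of subgroups of coprime orders at distance two, producing the forbidden chord.

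For the case $|G|=p_1p_2p_3$, every non-trivial subgroup has squarefree order, so its colour is a non-empty subset of $\{p_1,p_2,p_3\}$ of size at most two; by Hall's theorem each of the six biprime types is realised and each Sylow subgroup has prime order. I would suppose for contradiction that $(H_1,H_2,H_3,H_4,H_1)$ is an induced $4$-cycle and analyse the possible colour sequences. An edge forces either coprime orders or distinct Sylow subgroups of the shared prime meeting trivially, while a non-edge forces a common prime with a shared non-trivial subgroup. Running through the possible colour patterns around the cycle, in each configuration either two opposite vertices turn out to have coprime orders (which yields the forbidden chord by Hall's theorem giving trivial intersection) or the required non-edge cannot be realised by actual subgroups. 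Induced cycles of length $\ge 5$ are excluded by the same mechanism applied to consecutive four-vertex windows.

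For $|G|=p_1^kp_2$, the non-trivial subgroups partition into three families: (a) $p_1$-subgroups of orders $p_1^i$ with $1\le i\le k$; (b) Sylow $p_2$-subgroups of order $p_2$; and (c) mixed subgroups of orders $p_1^ip_2$ with $0\le i\le k-1$. Any subgroup of type (c) contains a unique Sylow $p_2$-subgroup, any two distinct subgroups of type (b) are adjacent by prime-order coprimality, and a type-(a) and a type-(b) subgroup are always adjacent by coprimality. Assuming an induced cycle $H_1,\dots,H_n,H_1$ with $n\ge 4$, I would argue that the colours must alternate in a restricted periodic pattern, and then use the unique Sylow $p_2$-subgroup carried inside any type-(c) vertex (together with the ambient Sylow $p_2$-subgroup of $G$) to locate a coprime pair at distance two, producing the required chord and hence a contradiction.

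The main obstacle is Case 2 when the Sylow $p_1$-subgroup is non-cyclic of large order: by the preceding proposition on non-abelian $p$-groups, its own intersection subgroup graph can support an induced $4$-cycle entirely on type-(a) vertices, so a chord cannot be extracted merely from the internal $p_1$-structure. The critical step is therefore to show that any would-be induced $4$-cycle in $\Gamma_S(G)$ has a chord coming from \emph{outside} the Sylow $p_1$-subgroup, namely from an ambient type-(b) or type-(c) subgroup provided by solvability; equivalently, that the inclusion $\Gamma_S(P)\hookrightarrow \Gamma_S(G)$ of the Sylow $p_1$-subgroup does not preserve induced cycles because every such cycle gains a chord from a Sylow $p_2$-subgroup coprime to all vertices. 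Making this chord-extraction precise in every configuration, and carrying out the parallel analysis for cycles of length $\ge 5$, is where the bulk of the technical work will lie.
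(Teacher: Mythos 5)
There is a genuine problem here, and it starts with the direction of the implication. Although the theorem is worded ``$\Gamma_S(G)$ is chordal \emph{if} $G$ is \dots'', the paper's proof runs entirely in the opposite direction: it \emph{assumes} $\Gamma_S(G)$ is chordal and uses Hall subgroups of the solvable group $G$ to build induced $4$-cycles whenever $|\pi(G)|\ge 4$, whenever $|\pi(G)|=3$ with some exponent exceeding $1$, and whenever $|\pi(G)|=2$ with both exponents exceeding $1$; the conclusion is the \emph{necessary} condition $|G|=p_1p_2p_3$ or $|G|=p_1^kp_2$. No sufficiency is proved (nor claimed in the body of the argument), so the ``if'' in the statement should be read as ``only if''. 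You have taken the literal reading and set out to prove sufficiency.

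That direction is false, so no amount of technical work will close your argument. The group $S_4$ is solvable of order $2^3\cdot 3=p_1^kp_2$, and the paper itself exhibits the induced $4$-cycle $\bigl((C_2)^2,\,S_3,\,D_4,\,C_3\bigr)$ in $\Gamma_S(S_4)$; likewise $(C_2)^4\times C_3$ has order $2^4\cdot 3$ and its intersection subgroup graph contains an induced $4$-cycle supported inside $(C_2)^4$ by the paper's proposition on abelian $p$-groups. Your own text correctly identifies the fatal configuration --- an induced $4$-cycle living entirely inside a non-cyclic Sylow $p_1$-subgroup $P$ --- but the ``critical step'' you propose to escape it (gaining a chord from an ambient $p_2$- or mixed subgroup) cannot exist: adjacency between two subgroups of $P$ in $\Gamma_S(G)$ is still the condition $H\cap K=\{e\}$, so $\Gamma_S(P)$ is an \emph{induced} subgraph of $\Gamma_S(G)$ (this is the subgroup-closed property of Anderson et al.\ quoted in the introduction), and an induced $4$-cycle in $\Gamma_S(P)$ remains chordless in $\Gamma_S(G)$. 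The correct task is the converse one: assume chordality, and use the existence of Hall $\pi$-subgroups to produce explicit induced $4$-cycles such as $(H_{\pi_1},H_{\pi_2},P_1,P_2,H_{\pi_1})$ with $P_i$ a subgroup of prime order inside $H_{\pi_i}$ whenever both exponents exceed $1$, thereby restricting $|G|$ to the two stated forms.
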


\begin{proof}
First we consider a solvable group $G$ with $|\pi(G)|\geq 2$ such that $\Gamma_{S}(G)$ is chordal.\\
Let $o(G)=p_{1}^{\alpha_{1}}p_{2}^{\alpha_{2}}\cdots p_{r}^{\alpha_{r}}$, where $p_{1}<p_{2}<\cdots <p_{r}$ and $\alpha_{i} \geq 1$ for all $1\leq i\leq r$.\\
\textbf{\underline{Claim 1.}} $r\leq 3$. \\
\textbf{\underline{Proof of Claim 1.}} Suppose $r\geq 4$. Then, $\pi(G)=\{p_{1}, p_{2}, p_{3}, p_{4}\}$. Since, $G$ is solvable then it has Hall subgroups. Let $\pi_{1}=\{p_{1}\}, \pi_{2}=\{p_{2}\}, \pi_{3}=\{p_{2}, p_{4}\}, \pi_{4}=\{p_{1}, p_{3}\}$ and $H_{\pi_{j}}$ be the corresponding Hall $\pi_{j}$-subgroups of $G$, where $1 \leq j \leq 4$. Then $\Gamma_{S}(G)$ contains a $4$-vertex cycle: $(H_{\pi_{1}}, H_{\pi_{2}}, H_{\pi_{4}}, H_{\pi_{3}}, H_{\pi_{1}})$, where $o(H_{\pi_{1}})=p_{1}^{\alpha_{1}}, o(H_{\pi_{2}})=p_{2}^{\alpha_{2}}, o(H_{\pi_{3}})=p_{2}^{\alpha_{2}}p_{4}^{\alpha_{4}}, o(H_{\pi_{4}})=p_{1}^{\alpha_{1}}p_{3}^{\alpha_{3}}$.
Hence, by Claim-1, $o(G)$ can have at most $3$ distinct prime divisors.\\
\textbf{\underline{Claim 2.}} if $r=3$ then $\alpha_{1}=\alpha_{2}=\alpha_{3}=1$\\
\textbf{\underline{Proof of Claim 2.}} Let there exist at least one $\alpha_{i}>1$ (say $\alpha_{1}>1$). Let $\pi_{1}=\{p_{1}\}, \pi_{2}=\{p_{2}\}, \pi_{3}=\{p_{2}, p_{3}\}$. Then $o(H_{\pi_{1}})=p_{1}^{\alpha_{1}}, o(H_{\pi_{2}})=p_{2}, o(H_{\pi_{3}})=p_{2}p_{3}$. Clearly, $\Gamma_{S}(G)$ contains a $4$-vertex cycle:$(H_{\pi_{1}}, H_{\pi_{2}}, P_{1}, H_{\pi_{3}}, H_{\pi_{1}})$, where $o(P_{1})=p_{1}$ and $P_{1}\subset H_{\pi_{1}}$. This completes the proof of Claim-2.\\
By Claim-1 and Claim-2 we conclude that if $r=3$ then $G$ must be a group of order $p_{1}p_{2}p_{3}$, where $p_{1}<p_{2}<p_{3}$.\\
\textbf{\underline{Claim 3.}} if $r=2$, then $\alpha_{i}=1$ for some $i=1, 2$\\
\textbf{\underline{Proof of Claim 3.}}
Let $o(G)=p_{1}^{\alpha_{1}}p_{2}^{\alpha_{2}}$ with both $\alpha_{i}>1$. Then $\Gamma_{S}(G)$ contains a $4$-vertex cycle:$(H_{\pi_{1}}, H_{\pi_{2}}, P_{1}, P_{2}, H_{\pi_{1}})$, where $\pi_{i}=\{p_{i}\}$ and $o(H_{\pi_{i}})=p_{i}^{\alpha_{i}}$ for $i=1, 2$; $P_{i}\subset H_{\pi_{i}}$
with order  $o(P_{i})=p_{i}$ for $i=1, 2$. Hence by Claim-3, there exists at least one $\alpha_{i}$ (say $\alpha_{2}$) which is greater than $1$.\\
By Claim-3, $o(G)=p_{1}^{\alpha_{1}}p_{2}$, where $\alpha_{1}\geq 1$. 
Therefore, by Claim-1, Claim-2 and Claim-3 we conclude that $G$ is either a group of order $p_{1}p_{2}p_{3}$, where $p_{1}<p_{2}<p_{3}$ or $G$ has order $p_{1}^{k}p_{2}$, where $k \geq 1$.

\end{proof}

\begin{theorem}
Let $S_{n}, A_{n}$ be the symmetric group, alternating group on $n$-symbols respectively. Then $\Gamma_{S}(S_{n})$ is a chordal graph if and only if $n=3 $.\\
On the other hand, $\Gamma_{S}(A_{n})$ is chordal if and only if $n \leq 4$.
\end{theorem}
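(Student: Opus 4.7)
The plan is to use the subgroup-closed property of $\Gamma_{S}(\cdot)$ recorded by Anderson et al.\ in the introduction: if $H\le G$ then $\Gamma_{S}(H)$ is an induced subgraph of $\Gamma_{S}(G)$. Since chordality is hereditary to induced subgraphs, it is enough to (i) verify chordality in the ``yes'' boundary cases $S_{3}$ and $A_{4}$, and (ii) exhibit an induced $4$-cycle in the smallest failing cases $S_{4}$ and $A_{5}$; the obstruction then lifts automatically to all $S_{n}$ with $n\ge 4$ and to all $A_{n}$ with $n\ge 5$. The case $S_{3}$ is immediate from Theorem~\ref{th_sym}, which records $\Gamma_{S}(S_{3})\cong K_{4}$, and for $n\le 3$ the graph $\Gamma_{S}(A_{n})$ is empty, hence chordal.

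For $A_{4}$ I would argue chordality by a direct vertex census: the $8$ proper non-trivial subgroups---three $C_{2}$'s (the double transpositions), four $C_{3}$'s, and the unique normal $V_{4}$---all belong to $\Gamma_{S}(A_{4})$, and an inspection of pairwise intersections shows that the $C_{2}$'s and $C_{3}$'s together induce a complete $K_{7}$, while $V_{4}$'s only neighbours are the four $C_{3}$'s. Any cycle of length $\ge 4$ therefore either avoids $V_{4}$ (so lies in $K_{7}$ and has a chord) or passes through $V_{4}$ between two distinct $C_{3}$-neighbours, which are adjacent and thus produce a chord. For $S_{4}$ I would exhibit the induced $4$-cycle
\[
V-\langle(1,2,3)\rangle-\langle(1,2)(3,4)\rangle-\mathrm{Stab}_{S_{4}}(4)-V,
\]
where $V=\{e,(12)(34),(13)(24),(14)(23)\}$ is the normal Klein four-subgroup and $\mathrm{Stab}_{S_{4}}(4)\cong S_{3}$: the four sides are straightforward coprime-order or fixed-point computations, while the two diagonals $V\cap\langle(1,2)(3,4)\rangle$ and $\langle(1,2,3)\rangle\cap\mathrm{Stab}_{S_{4}}(4)$ are non-trivial by containment. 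For $A_{5}$ I would exhibit
\[
A_{4}^{(1)}-\langle(1,2)(3,4)\rangle-V_{4}^{(1)}-V_{4}^{(5)}-A_{4}^{(1)},
\]
where $A_{4}^{(k)}$ denotes the point-stabiliser of $k$ in $A_{5}$ and $V_{4}^{(k)}\triangleleft A_{4}^{(k)}$ its Klein four subgroup. Using $V_{4}^{(1)}=\{e,(23)(45),(24)(35),(25)(34)\}$ and $V_{4}^{(5)}=\{e,(12)(34),(13)(24),(14)(23)\}$ one checks that these two Klein fours are element-disjoint and that $(1,2)(3,4)$ together with the non-identity elements of $V_{4}^{(5)}$ all displace $1$; this gives the four edges, and the diagonals are then killed by $V_{4}^{(1)}\subset A_{4}^{(1)}$ and $\langle(1,2)(3,4)\rangle\subset V_{4}^{(5)}$.

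The main technical obstacle is the $A_{5}$ construction, because all the natural high-order candidates fail. Two distinct Sylow $5$-subgroups always intersect trivially, so two $C_{5}$'s cannot occupy the two diagonal positions of an induced $C_{4}$; and each $D_{5}\le A_{5}$ meets every $V_{4}^{(k)}$ and every $A_{4}^{(k)}$ non-trivially, because its five involutions are distributed exactly one per point-stabiliser. One is therefore forced inside the lattice of point-stabilisers and must combine the double containment $\langle(1,2)(3,4)\rangle\subset V_{4}^{(5)}$ and $V_{4}^{(1)}\subset A_{4}^{(1)}$ (which provides the two diagonals) with the disjointness $V_{4}^{(1)}\cap V_{4}^{(5)}=\{e\}$ (which closes the cycle) and the fact that the ``off-stabiliser'' vertex $\langle(1,2)(3,4)\rangle$ displaces $1$ (which keeps the cycle edges trivial). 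Note also that the induced $P_{4}$ in $\Gamma_{S}(A_{5})$ supplied by Theorem~\ref{th_sym} does not close into an induced $C_{4}$ on its own, so the $4$-cycle must be constructed independently rather than extracted from that path.
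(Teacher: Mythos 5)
Your proposal is correct, and at the strategic level it coincides with the paper's proof: both arguments rest on the heredity of chordality under induced subgraphs together with explicit verification at the boundary cases $S_{3}$, $A_{4}$ (chordal) and $S_{4}$, $A_{5}$ (an induced $4$-cycle). The substantive difference lies in the explicit witnesses, and here your version is the one that actually works. The paper's $S_{4}$ cycle $((C_{2})^{2}, S_{3}, D_{4}, C_{3})$ with $(C_{2})^{2}=\langle(1,3),(2,4)\rangle$ and $S_{3}=\langle(1,2,3),(1,2)\rangle$ is not an induced $4$-cycle: $(1,3)$ lies in $(C_{2})^{2}$, in $S_{3}$ and in $D_{4}$, so two of the four claimed edges are in fact non-edges. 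Likewise the paper's $A_{5}$ cycle $(D_{5}, A_{4}, C_{5}, (C_{2})^{2})$ fails for precisely the reason you isolate as the ``main technical obstacle'': the five reflections of $D_{5}=\langle(1,2,3,4,5),(2,5)(3,4)\rangle$ fix one point each, so the reflection $(1,4)(2,3)$ fixing $5$ lies in both $A_{4}=A_{4}^{(5)}$ and $(C_{2})^{2}=V_{4}^{(5)}$, destroying two of the claimed edges. Your replacement cycles $V-\langle(1,2,3)\rangle-\langle(1,2)(3,4)\rangle-\mathrm{Stab}_{S_{4}}(4)-V$ and $A_{4}^{(1)}-\langle(1,2)(3,4)\rangle-V_{4}^{(1)}-V_{4}^{(5)}-A_{4}^{(1)}$ check out on all four edges and both diagonals. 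Your $A_{4}$ analysis (the seven cyclic subgroups induce a $K_{7}$ while $V_{4}$ is joined only to the four $C_{3}$'s, so any cycle of length at least $4$ acquires a chord) is also a cleaner and more complete justification than the paper's brief appeal to the shape of three consecutive vertices.
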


\begin{proof}
\textbf{(Part-I: Symmetric Groups)}\\
Let $n=4$. Then $\Gamma_{S}(S_{4})$ contains a $4$-vertex induced cycle $((C_{2})^{2}, S_{3}, D_{4}, C_{3}, (C_{2})^{2})$, where $(C_{2})^{2}=\langle (1, 3), (2, 4)\rangle, S_{3}=\langle (1, 2, 3), (1, 2)\rangle, D_{4}=\langle (1, 2, 3, 4), (1, 3)\rangle, C_{3}=\langle (1, 2, 3) \rangle$. Hence, $\Gamma_{S}(S_{4})$ is not chordal. So, if $n \geq 4$, then $\Gamma_{S}(S_{n})$ is not chordal. Therefore $n=3$.\\
For converse, using the similar argument as in Theorem \ref{th_sym} $\Gamma_{S}(S_{3})$ is a complete graph and hence it is chordal graph.\\
\textbf{Part-II: Alternating Groups}\\
We claim that if $n \geq 5$ then $\Gamma_{S}(A_{n})$ is not chordal.\\
Let $n=5$. $\Gamma_{S}(A_{5})$ contains a $4$-vertex cycle: $(D_{5}, A_{4}, C_{5}, (C_{2})^{2}, D_{5})$, where $D_{5}=\langle (1, 2, 3, 4, 5), (2, 5)(3, 4)\rangle, A_{4}=\langle (1, 2)(3, 4), (1, 2, 3) \rangle, C_{5}=\langle (1, 2, 3, 4, 5) \rangle, (C_{2})^{2}=\{e, (1, 2)(3, 4), (1, 3)(2, 4), (1, 4)(2, 3)\}$. Thus, $\Gamma_{S}(A_{5})$ and hence $\Gamma_{S}(A_{n})$ (where, $n >5$) is not a chordal graph.\\
This gives $n \leq 4$.\\
Conversely, If $n=3$, then $A_{3}$ is nothing but the cyclic group $C_{3}$. Since, $\Gamma_{S}(C_{3})$ is null graph so it is a chordal graph.\\
Next consider the group $A_{4}$. The non-trivial subgroups of $A_{4}$ are: $C_{2}\times C_{2}, C_{2}, C_{3}$. Then by Theorem \ref{th_sym} $\Gamma_{S}(A_{4})$ is cograph. So we need to check only the existance of any induced $4$-vertex cycle in $\Gamma_{S}(A_{4})$.\\ 
To construct a cycle in $\Gamma_{S}(A_{4})$, the three consecutive vertices must be $C_{2}, C_{3}, C_{2}\times C_{2}$. So, if we add another vertex then it must be either $C_{2}$ or $C_{3}$; which is adjacent to the previously added vertex. Hence, such induced cycle does not exist in $\Gamma_{S}(A_{4})$; this implies $\Gamma_{S}(A_{4})$ is chordal.
\end{proof}

\begin{theorem}
Let $D_{n}$ be the dihedral group with $2n$ elements. Then $\Gamma_{S}(D_{n})$ is chordal if and only if $n$ is prime power.
\end{theorem}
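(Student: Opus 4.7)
The plan is to prove the two directions separately, paralleling the cograph analysis in Theorem \ref{th_dih}.

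\textbf{Backward direction.} Suppose $n=p^{r}$ is a prime power. Every non-trivial proper subgroup of $D_{p^{r}}$ is either (i) a reflection $\langle a^{i}b\rangle\cong C_{2}$, or (ii) a rotation subgroup $\langle a^{p^{k}}\rangle$ or proper dihedral subgroup $\langle a^{p^{k}},a^{i}b\rangle$ with $0\le k\le r-1$. Every subgroup of type (ii) contains the unique order-$p$ rotation subgroup $\langle a^{p^{r-1}}\rangle\cong C_{p}$. Consequently any two type-(ii) subgroups share this $C_{p}$ and so are non-adjacent in $\Gamma_{S}(D_{p^{r}})$, while any two reflections intersect trivially and are mutually adjacent. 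Suppose for contradiction that $\Gamma_{S}(D_{p^{r}})$ contained an induced cycle $v_{1}v_{2}\cdots v_{k}v_{1}$ with $k\ge 4$; label each $v_{i}$ as $R$ (reflection) or $N$ (non-reflection). Two consecutive $v_{i}$s cannot both be $N$ (the edge would fail), so the $N$-labels are cyclically isolated by $R$-labels. A short case analysis on $k\ge 4$ now shows that either there is a block of $\ge 3$ consecutive $R$s (giving a chord inside the block) or there are $R$s in two different $R$-blocks of the cyclic arrangement (again giving a chord, since reflections form a clique). This contradicts inducedness, so $\Gamma_{S}(D_{p^{r}})$ is chordal.

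\textbf{Forward direction.} Assume $n$ has two distinct prime divisors $p$ and $q$, and set $d_{1}=n/p$, $d_{2}=n/q$. Consider the four dihedral subgroups
\[ H_{1}=\langle a^{d_{1}},a^{i_{1}}b\rangle,\quad H_{2}=\langle a^{d_{2}},a^{j_{1}}b\rangle,\quad H_{3}=\langle a^{d_{1}},a^{i_{2}}b\rangle,\quad H_{4}=\langle a^{d_{2}},a^{j_{2}}b\rangle, \]
of orders $2p,2q,2p,2q$. Since $H_{1}$ and $H_{3}$ both contain $\langle a^{d_{1}}\rangle\cong C_{p}$, the intersection $H_{1}\cap H_{3}$ is non-trivial, giving one ``diagonal'' non-edge; likewise $H_{2}\cap H_{4}\supseteq\langle a^{d_{2}}\rangle\cong C_{q}$ gives the other. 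For each cyclic adjacency $H_{k}\sim H_{k+1}$ one checks that the rotation parts intersect in $\langle a^{\operatorname{lcm}(d_{1},d_{2})}\rangle=\langle a^{n}\rangle=\{e\}$ (using $\gcd(p,q)=1$), and the reflection indices $i_{1},i_{2},j_{1},j_{2}$ are chosen via the Chinese Remainder Theorem so that no two cycle-adjacent subgroups share a common reflection. This produces an induced $4$-cycle, and $\Gamma_{S}(D_{n})$ is not chordal.

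\textbf{Main obstacle.} The delicate subcase is the squarefree one, $n=pq$. Here a CRT computation in the reverse direction shows that any $D_{p}$-subgroup $\langle a^{q},a^{i}b\rangle$ and any $D_{q}$-subgroup $\langle a^{p},a^{j}b\rangle$ of $D_{pq}$ necessarily share exactly one reflection, so $H_{k}\cap H_{k+1}\ne\{e\}$ no matter how the indices are chosen, and the $4$-cycle construction above breaks down. To complete the forward direction in this subcase one must either replace a pair of ``opposite'' vertices by the cyclic rotation subgroup $\langle a\rangle$ together with a carefully chosen reflection, or else exhibit an induced cycle of length $\ge 5$; this is the hardest step of the proof.
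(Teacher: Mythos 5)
Your backward direction is sound, and in fact cleaner than the paper's: once you observe that every non-trivial proper subgroup of $D_{p^{r}}$ is either a reflection or contains the unique central rotation subgroup of order $p$, the graph is a split graph (clique of reflections plus independent set of non-reflections), and split graphs are chordal. Your forward construction is also correct as far as it goes, but since $\gcd(n/p,n/q)=n/pq$, the condition ``no two cycle-adjacent subgroups share a reflection'' can be arranged precisely when $n/pq\ge 2$. The subcase $n=pq$ that you flag is therefore the entire remaining content of the forward direction --- and the gap cannot be closed, because the theorem as stated is false there.

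For $n=pq$ with $p\ne q$ prime, $\Gamma_{S}(D_{pq})$ is chordal. The non-reflection vertices are $\langle a\rangle$, $\langle a^{p}\rangle$, $\langle a^{q}\rangle$, the $p$ subgroups $\langle a^{p},a^{i}b\rangle\cong D_{q}$ and the $q$ subgroups $\langle a^{q},a^{j}b\rangle\cong D_{p}$. Your own CRT computation shows that every $D_{p}$-type subgroup meets every $D_{q}$-type subgroup in a reflection; any two subgroups of the same type share a non-trivial rotation subgroup; and $\langle a\rangle$ meets every non-reflection non-trivially. So the only edges among non-reflections are $\langle a^{p}\rangle\sim\langle a^{q}\rangle$, $\langle a^{p}\rangle\sim\langle a^{q},a^{j}b\rangle$ and $\langle a^{q}\rangle\sim\langle a^{p},a^{i}b\rangle$: a double star with centres $\langle a^{p}\rangle,\langle a^{q}\rangle$, plus the isolated vertex $\langle a\rangle$. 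The reflections form a clique, and every reflection is adjacent to both centres. Now take an induced cycle of length $\ge 4$: it contains at most two reflections, which must be consecutive, so the remaining vertices form an induced path of at least two vertices in the tree. If that path has an internal vertex, it is one of the two centres, hence adjacent to every reflection on the cycle --- a chord. If the path is a single edge (length-$4$ cycle with two reflections), one endpoint of that edge is a centre, again adjacent to both reflections --- a chord. A cycle with no reflections would live inside a tree. Hence no induced cycle of length $\ge 4$ exists; in particular $\Gamma_{S}(D_{6})$ is chordal even though $6$ is not a prime power. The paper's proof commits exactly the error you anticipated: its claimed $4$-cycle $(\langle a^{p}\rangle,\langle a^{q},b\rangle,\langle a^{p},ab\rangle,\langle a^{q}\rangle)$ is not induced, since $\langle a^{q},b\rangle\cap\langle a^{p},ab\rangle$ contains the reflection $a^{s}b$ with $s\equiv 0\pmod q$ and $s\equiv 1\pmod p$. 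The statement should be corrected to: $\Gamma_{S}(D_{n})$ is chordal if and only if $n$ is a prime power or a product of two distinct primes.
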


\begin{proof}
Let $\Gamma_{S}(D_{n})$ is a chordal graph. \\
We first suppose that $n$ has $3$ or more distinct prime divisors. Then $\Gamma_{S}(D_{n})$ contains an induced cycle $(\langle a^{p} \rangle, \langle a^{qr} \rangle, \langle a^{p}, b \rangle$, $\langle a^{r} \rangle, \langle a^{p} \rangle)$, where $p, q, r$ are $3$ distinct primes. So, $n$ can have at most two distinct prime divisors.\\
If possible let $n$ has at least $2$ prime divisor $p, q$. Then $\Gamma_{S}(D_{n})$ contains a cycle: $(\langle a^{p}\rangle, \langle a^{q}, b \rangle, \langle a^{p}, ab \rangle, \langle a^{q}\rangle, \langle a^{p}\rangle)$. Therefore $n$ is prime power.\\
Converse Part:\\
If $n$ is power of a prime then using the similar argument as done in Theorem \ref{th_dih}, we can prove that any cycle of length $4$ and above in $\Gamma_{S}(D_{n})$ contains a chord. Hence, there does not exist any chordless induced cycle of length $4$ and above in $\Gamma_{S}(D_{n})$. Therefore, $\Gamma_{S}(D_{n})$ is a chordal graph.
\end{proof}


\section{Some additional groups whose intersection subgroup graph is a cograph or a chordal graph or both}
\subsection{Sporadic simple group}
In this section we consider $26$ sporadic groups, namely, the five Mathieu groups ($M_{11}, M_{12}, M_{22}, M_{23}, M_{24}$), three Conway groups ($Co_{1}, Co_{2}, Co_{3}$), four Janko groups ($J_{1}, J_{2}, J_{3}, J_{4}$), three Fischer groups ($Fi_{22}, Fi_{23}, Fi_{24}$), Higman-Sims group($HS$), the Held group($He$), the McLaughlin group ($M^{c}L$), the Rudavalis group ($Ru$), the Suzuki group ($Suz$), the O'Nan group ($O'N$), the Harada-Norton group ($HN$), the Lyons group ($Ly$), the Thompson group ($Th$), the Baby Monster group ($B$) and the Monster group ($M$).
\begin{theorem}
Let $G$ be a sporadic simple group. Then $\Gamma_{S}(G)$ is neither a cograph nor a chordal graph.
\end{theorem}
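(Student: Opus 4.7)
The plan is to leverage the subgroup-closed property of $\Gamma_{S}$ to reduce the statement to a single fact about $A_5$. By Theorem \ref{th_sym} and its chordal-graph analogue proved earlier in this paper, we already know that $\Gamma_{S}(A_5)$ contains the induced $4$-vertex path $(D_5, A_4, C_5, S_3)$ and the induced $4$-cycle $(D_5, A_4, C_5, (C_2)^2, D_5)$; in particular $\Gamma_{S}(A_5)$ is neither a cograph nor a chordal graph. Since $\Gamma_{S}$ is subgroup-closed \cite{anderson}, it is enough to verify that each of the $26$ sporadic simple groups contains a subgroup isomorphic to $A_5$; the subgroup-closed property will then embed $\Gamma_{S}(A_5)$ as an induced subgraph of $\Gamma_{S}(G)$, producing both a forbidden $P_4$ and a forbidden $4$-cycle in $\Gamma_{S}(G)$ simultaneously.

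I would handle the $A_5$-inclusion case-by-case using the tables of maximal subgroups in the ATLAS of Finite Groups. For the Mathieu family the containments $A_5 \leq S_5 \leq M_{11} \leq M_{12}$ and $A_7 \leq M_{22} \leq M_{23} \leq M_{24}$ immediately give an $A_5$-subgroup in each $M_n$. For the Janko group $J_1$ the maximal subgroup $2 \times A_5$ provides $A_5$ directly, and each of $J_2, J_3, J_4$ contains $A_5$ inside one of its listed maximal subgroups (for instance inside $U_3(3)$, inside $L_2(16) \cdot 2$, and inside $U_3(11) \cdot 2$ respectively). For every remaining sporadic group ($Co_1, Co_2, Co_3, Fi_{22}, Fi_{23}, Fi_{24}, HS, He, M^{c}L, Ru, Suz, O'N, HN, Ly, Th, B, M$), the maximal-subgroup list in the ATLAS always contains either an alternating group $A_k$ with $k \geq 5$ or a group of the form $\PSL(2, q)$ with $5 \mid q^2 - 1$, and each of these contains a copy of $A_5 \cong \PSL(2,5)$. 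After recording these inclusions, the subgroup-closed property finishes the proof.

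The principal obstacle is that this argument is inherently non-uniform: there is no single structural reason forcing $A_5$ to appear inside every sporadic simple group, so the verification reduces to $26$ separate ATLAS look-ups. The only delicate cases are those sporadic groups whose maximal subgroups are mostly almost-simple groups of Lie type (such as $Ly$, $Th$, or $J_4$), but in each such case the literature explicitly exhibits an $A_5$-subgroup, so no structural difficulty actually arises. An alternative route would be to exhibit, uniformly in $G$, a non-cyclic dihedral subgroup $D_n$ with $n$ not a prime power and invoke Theorem \ref{th_dih} together with its chordal analogue; however the $A_5$-route is cleaner since a single subgroup simultaneously kills both the cograph property and the chordal property.
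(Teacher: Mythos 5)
Your proposal is essentially the paper's own argument: both rest on the subgroup-closed property of $\Gamma_{S}$ plus a case-by-case ATLAS check that every sporadic group contains a subgroup already shown to have a bad intersection subgroup graph; the paper routes most cases through $S_{5}\leq M_{11}$ and handles the seven exceptions with alternating/symmetric subgroups, while you target $A_{5}$ uniformly, which is a cosmetic rather than substantive difference. One concrete slip: you cannot locate $A_{5}$ inside $U_{3}(3)\leq J_{2}$, since $|U_{3}(3)|=2^{5}\cdot 3^{3}\cdot 7$ is not divisible by $5$; this does not affect the conclusion, because $J_{2}$ contains $A_{5}$ directly (e.g.\ inside its maximal subgroup $A_{5}\times D_{10}$), but the citation should be corrected.
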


\begin{proof}
According to the informations available at $\mathbb{ATLAS}$, the Mathieu group $M_{11}$ contains $S_{5}$ as the maximal subgroup. Since, $\Gamma_{S}(S_{5})$ is neither a cograph nor a chordal graph so is $\Gamma_{S}(M_{11})$. The other sporadic groups contain $M_{11}$ as its subgroup except $J_{1}, J_{2}, J_{3}, M_{22}, He, Ru$ and $Th$; this implies the intersection subgroup graph of those groups are neither a cograph nor a chordal graph. \\
For the exceptional seven groups we look for the subgroup whose intersection subgroup graph is neither a cograph nor a chordal graph.  $M_{22}$ contains $A_{7}$; $J_{1}, J_{2}$ contain $A_{5}$; $J_{3}$ contains $A_{6}$; $He$ contains $S_{7}$; $Ru$ contains $A_{8}$ and $Th$ contains $S_{5}$.\\
This completes the proof of the theorem.
\end{proof}

Before going to the subsequent subsections, first we want to recall Mih\`ailescu Theorem, which is a proof of the famous Catalan's conjecture, from number theory.
\begin{theorem}
The only solution to the equation $x^{a}- y^{b}=1$ in the natural number for $a, b>1$ and $x, y>0$ is $x=3, y=2, a=2, b=3$.
\end{theorem}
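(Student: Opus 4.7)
The statement is Mih\`ailescu's theorem (the resolution of Catalan's conjecture), and the paper is recalling it from number theory rather than reproving it. Accordingly, the most honest ``proof'' to propose here is an outline of Mih\`ailescu's strategy rather than an original argument. The overall plan is to reduce to the case of prime exponents and then analyse the resulting Diophantine equation in a cyclotomic field.

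First I would reduce to $a=p$, $b=q$ with $p,q$ prime: any solution with a composite exponent yields a solution with a prime exponent by extracting a prime divisor from the exponent. The ``mixed'' subcases where one of the exponents equals $2$ can then be dispatched by classical means. The equation $x^p - y^2 = 1$ was handled by Lebesgue (1850) via factorisation in $\mathbb{Z}[i]$, and $x^2 - y^q = 1$ with $q$ odd was handled by Ko Chao (1965) by an elementary descent. Between them these two results yield only the sporadic solution $3^2 - 2^3 = 1$.

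The genuinely hard case, which is what Mih\`ailescu resolved, is when both $p$ and $q$ are odd primes. Here I would work in $\mathbb{Z}[\zeta_q]$ and factor
\[ x^p \;=\; y^q + 1 \;=\; \prod_{i=0}^{q-1}(y + \zeta_q^i). \]
Using Cassels' divisibility theorems (which force $p \mid y$ and $q \mid x$), one shows that the factors on the right are pairwise coprime outside a controlled ideal, and hence each ideal $(y + \zeta_q^i)$ is a $p$-th power up to an obstruction living in the class group of $\mathbb{Q}(\zeta_q)$. Combining this with Stickelberger's theorem applied to the minus part of the class group of $\mathbb{Q}(\zeta_p)$ yields severe constraints on any hypothetical $(x,y,p,q)$.

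The main obstacle is the final step, which is Mih\`ailescu's actual contribution. Earlier work had shown that any solution forces a so-called double Wieferich pair $p^{q-1}\equiv 1\pmod{q^2}$ and $q^{p-1}\equiv 1\pmod{p^2}$, a condition that had been checked computationally in small ranges but could not be eliminated in general. Mih\`ailescu's innovation was to remove this dependence on computation by a delicate argument with cyclotomic units and logarithmic embeddings in $\mathbb{Q}(\zeta_p)$, extracting a contradiction for all remaining $(p,q)$. For the purposes of the present paper I would simply cite Mih\`ailescu's 2004 Crelle paper; reproducing the cyclotomic argument in full would be wildly disproportionate to the use the theorem receives in the subsequent subsections.
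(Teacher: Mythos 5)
Your instinct is exactly right: the paper offers no proof of this statement at all, merely recalling it as Mih\u{a}ilescu's theorem (the resolution of Catalan's conjecture) to be used as a black box in the later subsections, so citing the original work is the appropriate ``proof.'' Your sketch of the actual argument (reduction to prime exponents, the Lebesgue and Ko Chao cases, Cassels' relations, Stickelberger, and Mih\u{a}ilescu's cyclotomic-unit argument eliminating the double Wieferich condition) is accurate but goes beyond anything the paper attempts.
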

We use this theorem to deal with the number theoretic obstacles in the upcomming two sections.
\subsection{Classical group of Lie type of rank $1$ and $2$}
The groups $\PSL(2, q)$, $\PSU(3, q)$, $\PSL(3, q)$ and $\PSp(4, q)$ are the classical groups of rank $1$ and $2$. In this section, we explore the groups $G$ such that $\Gamma_{S}(G)$ is a cograph. 
\begin{theorem}
\label{even_case}
Let $G\cong \PSL(2, q)$ with $q$ even. Then $\Gamma_{S}(G)$ is cograph if and only if $q=2$.
\end{theorem}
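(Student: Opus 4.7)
I would treat the two directions separately. For the easy direction, $\PSL(2,2) \cong S_3$, and Theorem~\ref{th_sym} (or the direct observation that any two proper non-trivial subgroups of $S_3$ intersect trivially, so $\Gamma_S(S_3)$ is the complete graph $K_4$) shows that $\Gamma_S(\PSL(2,2))$ is a cograph.

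For the hard direction, write $G := \PSL(2, q)$ with $q = 2^n$ and $n \geq 2$; the goal is to exhibit an induced $P_4$ in $\Gamma_S(G)$. I would lean on three standard structural facts about $\PSL(2, q)$ in characteristic two: (i) distinct Sylow $2$-subgroups of $G$ intersect trivially, because the stabilizer of two points of the projective line is a split torus of odd order $q-1$; (ii) for a split torus $T \cong C_{q-1}$, the normalizer $N_G(T)$ is dihedral of order $2(q-1)$ and contains exactly $q-1$ involutions, all of them lying in the non-trivial Weyl coset outside $T$; and (iii) the product of any two distinct such involutions is a non-identity element of the odd-order group $T$, so no Sylow $2$-subgroup of $G$ can contain two involutions of $N_G(T)$ simultaneously.

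Using these, I would choose a Sylow $2$-subgroup $U$ of $G$ containing an involution $w' \in N_G(T) \setminus T$; such a $U$ exists because $G$ has $q+1$ Sylow $2$-subgroups and, by (iii), the $q-1 \geq 3$ involutions of $N_G(T) \setminus T$ occupy $q - 1$ distinct Sylow $2$-subgroups. Since $U \cong (C_2)^n$ has $2^n - 1 \geq 3$ involutions, I would pick another involution $w \in U$ with $w \neq w'$ and form $V := \langle w, w' \rangle \cong (C_2)^2$. My candidate induced path is
\[
V \;-\; T \;-\; \langle w \rangle \;-\; N_G(T).
\]
The three required edges reduce to $V \cap T \leq U \cap T = \{e\}$, $\langle w \rangle \cap T = \{e\}$, and $\langle w \rangle \cap N_G(T) = \{e\}$ (the last using that $w'$ is the unique involution of $N_G(T)$ inside $U$ by (iii), and that $w \neq w'$); the three required non-edges are immediate from $\langle w \rangle \subset V$, $w' \in V \cap N_G(T)$, and $T \subset N_G(T)$.

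The main technical obstacle is a clean justification of fact (iii), equivalently the ``at most one involution of $N_G(T) \setminus T$ per Sylow $2$-subgroup of $G$'' statement, which comes down to the observation that the product of two such involutions lies in $T$ (which has odd order), so it cannot simultaneously be a non-identity $2$-element. The hypothesis $n \geq 2$ is used in two essential places: to make $|U| = 2^n \geq 4$ large enough to contain an involution $w$ distinct from $w'$, and to ensure that $V$ is a genuine $(C_2)^2$ supplying the two non-trivial intersections from $V$ to $\langle w \rangle$ and to $N_G(T)$.
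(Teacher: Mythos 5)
Your proposal is correct, and it takes a genuinely different route from the paper. The paper splits into cases: for $q=2^{f}$ with $f\geq 3$ it finds an induced $P_{4}$ inside the elementary abelian Sylow $2$-subgroup (really inside a $(C_{2})^{3}$, namely $C_{2}\times C_{2}\times\{0\}$, $\{0\}\times\{0\}\times C_{2}$, $\{0\}\times C_{2}\times\{0\}$, $C_{2}\times\{0\}\times C_{2}$), and then disposes of $q=4$ separately via $\PSL(2,4)\cong A_{5}$ and the earlier theorem that $\Gamma_{S}(A_{5})$ is not a cograph (which used the path $(D_{5},A_{4},C_{5},S_{3})$). You instead give a single uniform construction valid for all $q=2^{n}$ with $n\geq 2$, built from the split torus $T\cong C_{q-1}$, its dihedral normalizer, and the fact that at most one reflection of $N_{G}(T)$ can lie in a given (elementary abelian) Sylow $2$-subgroup because the product of two distinct reflections is a nontrivial element of the odd-order group $T$. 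I checked your path $V - T - \langle w\rangle - N_{G}(T)$: all three edges and all three non-edges hold, the four subgroups are pairwise distinct (orders $4$, $q-1$, $2$, $2(q-1)$), and each has a neighbour so each is a legitimate vertex; for $q=4$ your construction specializes to the induced path $V_{4} - C_{3} - C_{2} - S_{3}$ inside $A_{5}$, which is a different witness from the paper's. What your approach buys is the elimination of the case split and of the dependence on the separate $A_{5}$ computation; what it costs is the need for the (standard but not entirely free) structural facts about tori and their normalizers in $\PSL(2,2^{n})$, whereas the paper's $f\geq 3$ case needs only the existence of a $(C_{2})^{3}$ subgroup. One cosmetic point: fact (i) (trivial intersection of distinct Sylow $2$-subgroups) is not actually used anywhere in your argument --- the only intersections with $T$ you need are trivial simply because $T$ has odd order --- so you could drop it.
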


\begin{proof}
First we find the case when $\Gamma_{S}(G)$ is cograph:\\
For $q=2^{f}~(q \geq 4)$, $G=\PSL(2, q)$ contains subgroup $(C_{2})^{f}\rtimes C_{q-1}$. Clearly $f=2$; elsewhere $\Gamma_{S}(G)$ contains a path $C_{2}\times C_{2}\times \{0\}, \{0\}\times \{0\}\times C_{2}, \{0\}\times C_{2}\times \{0\}, C_{2}\times \{0\}\times C_{2}$. But if $q=4$ then $G=\PSL(2, 4)\cong A_{5}$. Therefore, $\Gamma_{S}(A_{5})$ and hence $\Gamma_{S}(G)$ is not a cograph. \\
If $q=2$, then $\PSL(2, 2)\cong S_{3}$. So, $\Gamma_{S}(\PSL(2, 2))$ is cograph.\\
Converse part is obvious.
\end{proof}

\begin{theorem}
\label{odd_case}
Let $G\cong \PSL(2, q)$ with $q=p^{f}$($p$ odd prime). Then $\Gamma_{S}(G)$ is a cograph if and only if $q=3$.
\end{theorem}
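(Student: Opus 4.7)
The plan is as follows. The ``if'' direction is immediate: $\PSL(2, 3) \cong A_{4}$, and by Theorem \ref{th_sym} $\Gamma_{S}(A_{4})$ is a cograph. For the converse, I will show that when $q = p^{f} \geq 5$ is an odd prime power, $\Gamma_{S}(\PSL(2, q))$ contains an induced $P_{4}$. Since the intersection subgroup graph is subgroup-closed, it suffices to exhibit a subgroup $H \leq \PSL(2, q)$ such that $\Gamma_{S}(H)$ is not a cograph.

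The central tool is Dickson's classification of subgroups of $\PSL(2, q)$, which guarantees the dihedral subgroups $D_{(q-1)/2}$ and $D_{(q+1)/2}$ (using the convention $|D_{n}| = 2n$ employed earlier in the paper). By Theorem \ref{th_dih}, $\Gamma_{S}(D_{n})$ is a cograph if and only if $n$ is a prime power. Thus my first step is the observation that if at least one of $(q-1)/2$ and $(q+1)/2$ is divisible by two distinct primes, then the corresponding dihedral subgroup already forces an induced $P_{4}$ in $\Gamma_{S}(\PSL(2, q))$, finishing the argument in this generic case.

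It remains to treat the situation in which both $m := (q-1)/2$ and $m+1 := (q+1)/2$ are prime powers, i.e.\ two consecutive integers are prime powers. Here I will invoke Mih\`ailescu's theorem: if both exponents are at least $2$, then $(m, m+1) = (8, 9)$ and hence $q = 17$. Otherwise at least one of $m, m+1$ is a prime, and I split into subcases --- both prime, or $m = 2^{a}$ with $a \geq 2$ and $m+1$ a Fermat prime, or $m = 2^{b} - 1$ a Mersenne prime with $m+1 = 2^{b}$ --- imposing the extra constraint that $q = 2m+1$ itself be a prime power. A short elimination (using that $2^{a+1}+1$ and $2^{b+1}-1$ are divisible by $3$ for most small exponents) shows these three subcases yield respectively $q = 5$, $q = 9$, and $q = 7$. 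So the list of exceptional values to handle by hand is exactly $q \in \{5, 7, 9, 17\}$.

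Each exceptional case is then dispatched by naming a subgroup whose intersection subgroup graph is already known not to be a cograph. For $q = 5$, $\PSL(2, 5) \cong A_{5}$, so Theorem \ref{th_sym} applies. For $q = 9$, $\PSL(2, 9) \cong A_{6}$ contains $A_{5}$. For $q = 7$ and $q = 17$, the congruences $q \equiv -1 \pmod{8}$ and $q \equiv 1 \pmod{8}$ together with Dickson's theorem yield $S_{4} \leq \PSL(2, q)$, and again Theorem \ref{th_sym} finishes the job. The main obstacle I anticipate is the number-theoretic reduction step: while the appeal to Mih\`ailescu is clean, ruling out higher Fermat-prime and Mersenne-prime parametrizations requires a careful small mod-$3$ divisibility argument on $2^{a+1} \pm 1$, and I need to be sure the case list is genuinely exhaustive before moving on to the exceptional-case bookkeeping.
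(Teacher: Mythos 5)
Your proof is correct, and although it rests on the same two pillars as the paper's argument --- the dihedral subgroups of $\PSL(2,q)$ combined with Theorem \ref{th_dih}, and Mih\`ailescu's theorem to control consecutive prime powers --- your organization of the case analysis is genuinely different and, in fact, tighter. The paper first reduces to $f\le 2$ via the elementary abelian subgroup $(C_p)^f$ and then argues separately for $q=p$ and $q=p^2$, claiming that ``one of $p\pm1$ is not a prime power'' leaves only $p=5$. But the dihedral subgroups of $\PSL(2,q)$ have orders $q-1$ and $q+1$, i.e.\ they are $D_{(q\pm1)/2}$ in the paper's own convention $|D_n|=2n$, so the quantity that must fail to be a prime power is $(q\pm1)/2$, not $q\pm1$; both $(q-1)/2$ and $(q+1)/2$ are prime powers exactly when $q\in\{5,7,9,17\}$, and the paper's argument silently passes over $q=7$ and $q=17$ (for $p=7$ its own hypotheses even remove $D_{p+1}$ from play). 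You isolate precisely these four exceptional values by enumerating consecutive prime-power pairs and then dispatch $q=7,17$ via the Dickson subgroup $S_4$ together with Theorem \ref{th_sym}; your treatment is also uniform in $f$, so no preliminary reduction via $(C_p)^f$ is needed. The only point to nail down before this is a finished proof is the exhaustiveness of your subcase list for consecutive prime powers $(m,m+1)$: record explicitly that the even member must be a power of $2$ and that the odd member is either a prime or a proper prime power (the latter again handled by Mih\`ailescu, yielding only $(8,9)$), after which the mod-$3$ eliminations you sketch do close out the Fermat and Mersenne families and confirm the list $\{5,7,9,17\}$.
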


\begin{proof}
For $q=p^{f}(\geq 5)$, $\PSL(2, q)$ contains subgroup $(C_{p})^{f}$. By similar argument as done in Theorem \ref{even_case} we conclude $f=1$ or $2$.\\
If $f=1$ (or $q=p$), then $\PSL(2, q)$ contains the dihedral groups $D_{p-1}$ (if $q \geq 13$) and $D_{p+1}$( for $p \neq 7, 9$). Since $p$ is odd, then one of $p\pm 1$ is not of the form power of some prime. So, either $\Gamma_{S}(D_{p+1})$ or $\Gamma_{S}(D_{p-1})$ is not a cograph. This left the only case $p=5$. For $p=5$, $\PSL(2, 5)$ is the alternating group $A_{5}$. We know that $\Gamma_{S}(A_{5})$ is not a cograph.\\
If $f=2$ (or $q=p^{2}(\geq 5)$),  then $\PSL(2, q)$ contains the dihedral groups $D_{p^{2}-1}$ (if $q \geq 13$) and $D_{p^{2}+1}$( for $p \neq 9$). Clearly, if $p$ odd, then $p^{2}-1$ is composite which is not a prime power. So, $\Gamma_{S}(D_{p^{2}-1})$ is not cograph. On the other hand, if $q=11$ then $\PSL(2, q)$ contains $D_{q+1}$ or $D_{12}$. But $\Gamma_{S}(D_{12})$ is not cograph. Now we have to check for $q=3, 9$.\\
If $q=3$ then $\PSL(2, 3)$ is $A_{4}$, whose intersection subgroup graph is a cograph.\\
If $q=9$ then $\PSL(2, 9)$ contains $\PGL(2, 3)$ which is $S_{4}$. As $\Gamma_{S}(S_{4})$ is not cograph so is $\Gamma_{S}(\PSL(2, 9))$.
Converse is clear.
\end{proof}

From the above theorem it is clear that if $G=\PSL(2, q)$, then $\Gamma_{S}(G)$ is cograph if and only if $q \in \{2, 3\}$. Thus a question arise in case of chordality :

\begin{problem}
Find the values of $q$ for which $\PSL(2, q)$ is chordal.
\end{problem}

\begin{theorem}
Let $G\cong \PSp(4, q)$. Then $\Gamma_{S}(G)$ is never be a cograph.
\end{theorem}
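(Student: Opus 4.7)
The plan is to reduce the theorem, via the subgroup-closed property of $\Gamma_S$ recorded in the introduction and the ``containment of $(C_p)^3$'' obstruction isolated inside the proof of Proposition \ref{lm_p_co}, to a single subgroup-existence statement. Recall from that proof: whenever a group $H$ contains a subgroup isomorphic to $C_p\times C_p\times C_p$, the four subgroups
\[
C_p\times C_p\times\{e\},\ \{e\}\times\{e\}\times C_p,\ C_p\times\{e\}\times\{e\},\ \{e\}\times C_p\times C_p
\]
induce a $P_4$ in $\Gamma_S(H)$, so $\Gamma_S(H)$ cannot be a cograph. Hence it suffices to exhibit, for every prime power $q=p^f$, an elementary abelian $p$-subgroup of rank $3$ in $\PSp(4,q)$; the statement then follows from the subgroup-closed property.

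\textbf{Construction of $(C_p)^3$.} To produce such a subgroup uniformly I would use the Chevalley structure of $\mathrm{Sp}(4,q)$. Its root system is of type $C_2$, with positive roots $\alpha_1,\alpha_2,\alpha_1+\alpha_2,2\alpha_1+\alpha_2$. A direct root-sum check shows that the three root subgroups $U_{\alpha_2}$, $U_{\alpha_1+\alpha_2}$ and $U_{2\alpha_1+\alpha_2}$, each isomorphic to $(\mathbb{F}_q,+)$, pairwise commute, because the sums $\alpha_1+2\alpha_2$, $2\alpha_1+2\alpha_2$ and $3\alpha_1+2\alpha_2$ are not roots of $C_2$. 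Their product is therefore an elementary abelian $p$-group of order $q^3=p^{3f}$ inside $\mathrm{Sp}(4,q)$. Since the centre of $\mathrm{Sp}(4,q)$ has order $\gcd(2,q-1)$, coprime to $p$, this subgroup embeds into $\PSp(4,q)$ and supplies the desired copy of $(C_p)^3$.

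\textbf{Main obstacle and contingency.} The main point requiring care is invoking the Chevalley commutator formula cleanly; I would either quote it or, equivalently, write the three root subgroups explicitly as transvection-type matrices stabilising a symplectic flag and verify commutativity by hand. As a sanity check, and as a self-contained substitute in the smallest cases, I would note the exceptional isomorphism $\PSp(4,2)\cong S_6$, which visibly contains $\langle(1\,2),(3\,4),(5\,6)\rangle\cong (C_2)^3$, together with the well-known maximal subgroup $3^{3}{:}S_4$ of $\PSp(4,3)$, which visibly contains $(C_3)^3$. For $q\geq 4$ a second contingency is to invoke the existing theorems of the paper directly: $\PSp(4,q)$ contains $\PSL(2,q)$ (as the projective image of a Levi block $\mathrm{Sp}(2,q)$), and by Theorems \ref{even_case} and \ref{odd_case} the graph $\Gamma_S(\PSL(2,q))$ already fails to be a cograph whenever $q\notin\{2,3\}$. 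Either route yields the conclusion.
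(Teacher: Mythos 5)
Your proposal is correct, and it takes a genuinely different route from the paper. You reduce the whole theorem to exhibiting $(C_p)^3$ inside $\PSp(4,q)$ (via the three pairwise-commuting root subgroups $U_{\alpha_2},U_{\alpha_1+\alpha_2},U_{2\alpha_1+\alpha_2}$, i.e.\ the elementary abelian unipotent radical of the Siegel parabolic, of order $q^3$), and then invoke the induced $P_4$ on $C_p\times C_p\times\{e\},\ \{e\}\times\{e\}\times C_p,\ C_p\times\{e\}\times\{e\},\ \{e\}\times C_p\times C_p$ together with the subgroup-closed property; all three pairwise root sums indeed fail to be roots of $C_2$, and the subgroup avoids the centre, so the construction is sound and works uniformly for every $q=p^f$, including $f=1$. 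The paper instead works with the maximal tori: it uses the subgroups $C_{q\pm1}\times C_{(q\pm1)/2}$ (for odd $q$) and $C_{q\pm1}\times C_{q\pm1}$ (for even $q$), applies Theorem \ref{th_nil_co} and Mih\`ailescu's theorem to force $q\in\{3,5\}$ or $q\in\{2,4,8\}$, and then kills the survivors by locating $S_5$, $S_6$ or $\PSL(2,8)$. Your argument is shorter, case-free and number-theory-free; note, however, that it exploits a feature special to $\PSp(4,q)$ (an abelian unipotent radical of order $q^3$), whereas for $\PSL(3,p)$ the Sylow $p$-subgroup is extraspecial of order $p^3$ with no $(C_p)^3$ inside, so the paper's torus-plus-Catalan machinery is the one that transfers to the neighbouring theorems. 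One small correction to your contingency: for odd $q$ the projective image of a Levi block $\mathrm{Sp}(2,q)$ in $\PSp(4,q)$ is $\SL(2,q)$, not $\PSL(2,q)$ (the element $-I_2\oplus I_2$ is not central in $\mathrm{Sp}(4,q)$); a copy of $\PSL(2,q)$ arises instead as the image of the diagonal of $\mathrm{Sp}(2,q)\times\mathrm{Sp}(2,q)$. This does not affect your main argument, which stands on its own.
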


\begin{proof}
Two cases arise.\\
\textbf{Case 1.} $q$ is odd.\\
$\PSp(4, q)$ contains subgroups of the form $H\cong C_{q\pm 1}\times C_{(q\pm 1)/2}$. Clearly, as $q\pm 1$ is divisible by $2$ so the numbers $(q\pm 1)/2$ must be prime power. On the other hand, the numbers $q\pm 1$ must be power of $2$; otherwise $H$ contains a subgroup $C_{2p}\times C_{p}$ or $C_{2p}\times C_{2}$ with odd prime $p$. But by Theorem \ref{th_nil_co} the intersection subgroup graph of both are not cograph. Now one of the numbers $(q\pm 1)/2$ is power of $2$ as one of them is divisible by $2$. So, one of $q\pm 1$ must be equal to $4$; elsewhere there exists a subgroup $C_{4}\times C_{2p}$(where $p$ odd). Thus, the remaining cases are: $q=3, 5$.\\
If $q=5$, $\PSp(4, 5)$ contains $S_{5}$ and for $q=3$, $\PSp(4, 3)$ contains $S_{6}$ as a subgroup. Since neither $\Gamma_{S}(S_{5})$ nor $\Gamma_{S}(S_{6})$ is a cograph so in this case we conclude there does not exist $q$ for which $\Gamma_{S}(\PSp(4, q))$ is a cograph.\\
\textbf{Case 2.} $q\geq4$ is even.\\
Then $\PSp(4, q)$ contains a subgroup $C_{q\pm 1}\times C_{q\pm 1}$. Then both the numbers $q\pm 1$ must be prime power; otherwise $\PSp(4, q)$ contains a nilpotent subgroup say $H$ such that $\Gamma_{S}(H)$ is not a cograph. Then by Catalan's conjecture we have $q \in \{2, 4, 8\}$. If $q=2$, $\PSp(4, 2)\cong S_{6}$. If $q=4$, $\PSp(4, 4)$ contains $S_{6}$ and for $q=8$, $\PSp(4, 8)$ contains $\PSL(2, 8)$ as a subgroup. Since none of them has intersection subgroup graph which is a cograph so for even values of $q$, $\Gamma_{S}(\PSp(4, q))$ is never be a cograph.
\end{proof}

\begin{theorem}
Let $G\cong \PSL(3, q)$. Then $\Gamma_{S}(G)$ is never be a cograph.
\end{theorem}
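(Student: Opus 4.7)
The strategy mirrors the one used for $\PSp(4, q)$ in the preceding theorem: exhibit inside $\PSL(3, q)$ a subgroup whose intersection subgroup graph already contains an induced $P_{4}$, and then invoke the subgroup-closed property of $\Gamma_{S}$ recalled in the introduction. In the generic regime $q \geq 4$, the plan is to use the standard embedding $\PSL(2, q) \hookrightarrow \PSL(3, q)$, realized for instance via the orthogonal subgroup $\mathrm{O}(3, q)$ in odd characteristic and via $\PSL(2, q) \cong \mathrm{Sp}(2, q) \hookrightarrow \PSL(3, q)$ in even characteristic. With this containment in place, Theorems \ref{even_case} and \ref{odd_case} immediately give that $\Gamma_{S}(\PSL(2, q))$ contains an induced $P_{4}$ for every $q \geq 4$, and hence so does $\Gamma_{S}(\PSL(3, q))$.

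The two remaining cases $q = 2$ and $q = 3$ must be handled individually. For $q = 2$, the exceptional isomorphism $\PSL(3, 2) \cong \PSL(2, 7)$, combined with Theorem \ref{odd_case} applied at $q = 7$, directly produces the required induced $P_{4}$. For $q = 3$, the group $\PSL(3, 3)$ (of order $5616$) contains $S_{4}$ as a maximal subgroup according to the $\mathbb{ATLAS}$, and Theorem \ref{th_sym} tells us that $\Gamma_{S}(S_{4})$ is not a cograph; subgroup-closedness then completes the argument.

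The main technical obstacle I would anticipate is pinning down the embedding $\PSL(2, q) \hookrightarrow \PSL(3, q)$, since the naive block-diagonal map $A \mapsto \mathrm{diag}(A, (\det A)^{-1})$ descends to $\SL(2, q) \hookrightarrow \PSL(3, q)$ rather than to $\PSL(2, q) \hookrightarrow \PSL(3, q)$ when $q$ is odd, because $-I_{2}$ maps to a non-central element of $\SL(3, q)$. As a fall-back that sidesteps this issue, one can often exploit the split maximal torus of $\PSL(3, q)$, a quotient of $C_{q-1} \times C_{q-1}$: whenever $q-1$ has at least two distinct prime divisors, Theorem \ref{th_nil_co} shows that this torus already contains an induced $P_{4}$, leaving only a handful of exceptional $q$ (prime powers with $q-1$ a prime power) to be cleaned up by direct $\mathbb{ATLAS}$ inspection, exactly in the spirit of the proof for $\PSp(4, q)$.
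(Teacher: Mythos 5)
Your argument is correct, but it takes a genuinely different and noticeably shorter route than the paper. The paper works with the maximal tori: it takes the subgroups $C_{q+1}$ and $C_{q-1}\times C_{(q-1)/\gcd(3,q-1)}$ from \cite{wilson}, applies Theorem \ref{th_nil_co} to force $q\pm 1$ into very restricted arithmetic shapes, invokes Mih\`ailescu's theorem to cut the possibilities down to $q\in\{3,5,9,17\}$ (odd) and $q\in\{2,4,8,16\}$ (even), and then kills each survivor by exhibiting an explicit subgroup ($S_5$, $A_6$, $\PSL(2,7)$, $\PSL(2,17)$ or $\PGL(2,17)\supseteq D_{18}$, etc.). You instead observe that $\PSL(2,q)$ embeds in $\PSL(3,q)$ for every $q$ --- via $\mathrm{SO}(3,q)\cong\PGL(2,q)\leq\SL(3,q)$ meeting the centre trivially in odd characteristic (this containment is explicitly in Bloom \cite{bloom}, which the paper already cites and in fact uses for its $q=17$ case), and via the block embedding of $\SL(2,q)=\PSL(2,q)$ in even characteristic --- so that Theorems \ref{even_case} and \ref{odd_case} dispose of all $q\geq 4$ at once; the two leftover cases $q=2$ ($\PSL(3,2)\cong\PSL(2,7)$) and $q=3$ ($S_4\leq\PSL(3,3)$, Theorem \ref{th_sym}) are handled exactly as you say. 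Your route buys uniformity and avoids the number theory entirely, essentially promoting the trick the paper uses only at $q=17$ into the whole proof; the paper's route has the mild advantage of staying inside nilpotent subgroups for the generic reduction, but at the cost of the Catalan-style case analysis. Two small points of hygiene: state the embedding as $\mathrm{SO}(3,q)\cong\PGL(2,q)$ rather than $\mathrm{O}(3,q)$ (the latter contains $-I\notin\SL(3,q)$), and drop or demote the torus fallback --- the set of prime powers $q$ with $q-1$ a prime power is not known to be finite, so that fallback alone would not close the argument, whereas your primary $\PSL(2,q)$ argument already does.
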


\begin{proof}
According to \cite{wilson}, $G$ contains subgroups $C_{q+1}, C_{q-1}\times C_{q-1/gcd(3, q-1)}$. Here we consider two cases based on $q$.\\
\textbf{Case 1.} $q$ is odd.\\
Since $q$ is odd and $\Gamma_{S}(C_{q-1})$ is a cograph, then $q-1$ must be a power of $2$ or $2^{r}p^{s}$. For the later case, we get a subgroup $C_{2p}\times C_{2}$ contained in $C_{q-1}\times C_{q-1/gcd(3, q-1)}$. Since $\Gamma_{S}(C_{2p}\times C_{2})$ is not a cograph (due to Theorem \ref{th_nil_co}), $q-1$ must be a power of $2$. Thus either $q=3$ or $q-1=2^{r}$ with $r>1$. \\
If $q-1=2^{r}$, then $q=2^{r}+1$ and $q+1=2(2^{r-1}+1)$. Now the pairs $(q-1, q)$ is a solution to Catalan's conjecture; which gives $q=9$ or $q$ is a prime say $p_{1}$. As $q=2^{r}+1$ is prime then $r$ must be power of $2$. So, $r-1$ is odd and then $6|(q+1)$. \\
On the other hand, if $\Gamma_{S}(C_{q+1})$ is cograph this implies $q+1$ is either power of $2$ or $2^{m}3^{n}$ (with $m, n \geq 1$). But both $q+1, q-1$ cannot be power of $2$ as in that case the pair $(q-1, q+1)$ will be a solution to Catalan's conjecture and this contradicts Mih\`ailescu's Theorem. Thus $q+1=2^{m}3^{n}$ and $q-1=2^{r}$. Now, $(q+1)-(q-1)=2$ gives $2^{m}3^{n}-2^{r}=2$ i.e. $2^{m-1}3^{n}-2^{r-1}=1$. Then the only solutions are $(2^{m-1}3^{n}, 2^{r-1})=(3, 2)$ or $(9, 8)$. Hence $q=5, 17$. Thus we have to check the cases $q \in \{3, 5, 9, 17\}$.\\
If $q=3$, $\PSL(3, 3)$ contains $D_{36}$ as a subgroup; for $q=5$, $\PSL(3, 5)$ contains $S_{5}$. Again for $q=9$, $\PSL(3, 9)$ contains $\PSL(3, 3)$ and if $q=17$, $\PSL(3, 17)$ contains either $\PSL(2, 17)$ or $\PGL(2, 17)$ (by \cite{bloom}). Clearly $\Gamma_{S}(\PSL(2, 17))$ is not a cograph by Theorem \ref{odd_case}. And $\PGL(2, 17)$ contains $D_{18}$ (by \cite{giudici}). Hence, for none of the values of odd $q$, $\Gamma_{S}(\PSL(3, q))$ is a cograph.\\
\textbf{Case 2.} $q$ is even.\\
As $q-1$ is odd so it is either power of an odd prime or three times of power of an odd prime. Moreover, $G$ also contains a subgroup $C_{q+1}\times C_{q-1/gcd(3, q-1)}$.\\ Let \textbf{$q \neq 4$}:\\ Then $q+1$ must be prime power as $gcd(q-1, q+1)=1$ for $q$ even. \\
Let $q-1=3p^{r}$ and $q+1=p_{1}^{s}$ then $G$ contains a nilpotent subgroup $C_{3p}\times C_{p_{1}}$, whose intersection subgroup graph is not a cograph due to Theorem \ref{th_nil_co}. This implies $q-1=p^{r}$. \\
Now, both $q\pm 1$ are power of some odd prime. Due to Catalan's conjecture if $q \neq 2, 8$, then $q+1$ must be prime say $p_{1}$. Since $q=2^{k}$ then $q+1=2^{k}+1$ is a prime $p_{1}$. This implies $k$ must be power of $2$. If $k \geq 8$ then $q-1=2^{k}-1$ is divisible by $3$ distinct prime divisors of $2^{8}-1$. Hence, $k=2$ or $4$ and therefore $q \in \{2, 4, 8, 16\}$.\\
If $q=2$, $\PSL(3, 2) \cong \PSL(2, 7)$. As, $\Gamma_{S}(\PSL(2, 7))$ is not cograph so is $\PSL(3, 2)$.\\
If $q=4$, $\PSL(3, 4)$ contains $A_{6}$, whose intersection subgroup graph is not cograph.\\
For $q=8$, $\PSL(3, 8)$ contains $\PSL(3, 2)$. On the other hand, for $q=16$, $\PSL(3, 16)$ contains $A_{6}, A_{7}$, whose intersection subgroup graph is not cograph. So, there does not exist any even $q$ such that $\Gamma_{S}(\PSL(3, q))$ is a cograph. 
\end{proof}

\begin{theorem}
\label{psu_ev_co}
Let $G\cong \PSU(3, q)$, where $q(\geq 4)$ is even. Then $\Gamma_{S}(G)$ is never be a cograph.
\end{theorem}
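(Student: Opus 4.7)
The plan closely follows the torus-based strategy used in the preceding proof for $\PSL(3,q)$: I exhibit a nilpotent subgroup of $\PSU(3,q)$ whose intersection subgroup graph is already known, via Theorem \ref{th_nil_co}, not to be a cograph, and then invoke the subgroup-closed property of $\Gamma_{S}$. The central object is the split maximal torus $T\cong (C_{q+1})^{2}/C_{d}$ of $\PSU(3,q)$, where $d=\gcd(3,q+1)$; since $q=2^{f}\geq 4$, the integer $q+1$ is odd.

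When $f$ is even we have $d=1$ and $T\cong (C_{q+1})^{2}$. If $q+1$ is not a prime power, then $T$ contains $C_{p}^{2}\times C_{r}^{2}$ for distinct odd primes $p,r\mid q+1$; this group has $|\pi|=2$, is not cyclic, and has no 2-part, so Theorem \ref{th_nil_co} shows its intersection subgroup graph is not a cograph. By Mih\`ailescu's Theorem, the remaining subcase $q+1=p^{a}$ with $q=2^{f}$ forces $a=1$ (the Catalan exception $3^{2}=2^{3}+1$ has $f$ odd), so $q+1$ is a Fermat prime, giving $q\in\{4,16,256,\ldots\}$. For each such $q\geq 16$ I switch to the cyclic non-split torus $C_{q^{2}-1}$: using the classical factorisation $2^{2^{n}}-1=\prod_{k<n}F_{k}$, one has $|\pi(q^{2}-1)|\geq 3$ for $q=2^{2^{n}}$ with $n\geq 2$, hence $C_{q^{2}-1}$ contains $C_{p_{1}p_{2}p_{3}}$ whose intersection subgroup graph contains an induced $P_{4}$.

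When $f$ is odd we have $d=3$; writing $q+1=3^{s}m$ with $\gcd(m,3)=1$, a short change-of-basis computation in $(C_{3^{s}})^{2}$ identifies $T\cong C_{3^{s}}\times C_{3^{s-1}}\times C_{m}^{2}$. Provided $m>1$, this group has $|\pi(T)|=2$, a non-cyclic $m$-part $C_{m}^{2}$, and no 2-part, so Theorem \ref{th_nil_co} again yields that $\Gamma_{S}(T)$ is not a cograph. Mih\`ailescu's Theorem reduces the remaining case $m=1$, i.e.\ $q+1=3^{s}$ with $q=2^{f}\geq 4$, to $q=8$. The two residual values $q=4$ and $q=8$ are handled via known maximal subgroups from the $\mathbb{ATLAS}$: $A_{5}\leq \PSU(3,4)$ together with Theorem \ref{th_sym} gives a non-cograph, and $\PSU(3,8)$ contains $\PSL(2,7)$, which in turn contains $S_{4}$, so Theorem \ref{th_sym} again supplies an induced $P_{4}$. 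In every case the subgroup-closed property transfers the induced $P_{4}$ up to $\Gamma_{S}(\PSU(3,q))$.

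The step I expect to be most delicate is the explicit structure calculation $T\cong C_{3^{s}}\times C_{3^{s-1}}\times C_{m}^{2}$ when $d=3$: one must check carefully that quotienting $(C_{q+1})^{2}$ by the diagonally embedded scalar $C_{3}$ really produces a non-cyclic $m$-part together with a 3-part of the claimed shape, so that Theorem \ref{th_nil_co} actually triggers. The remainder is a clean combination of the torus analysis, Theorem \ref{th_nil_co}, Theorem \ref{th_sym}, the subgroup-closed property of $\Gamma_{S}$, and Mih\`ailescu's Theorem.
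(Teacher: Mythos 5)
Your overall strategy is sound and close in spirit to the paper's: both arguments locate abelian torus subgroups of $\PSU(3,q)$ that violate Theorem \ref{th_nil_co}, use Mih\`ailescu's theorem to squeeze out the exceptional $q$, and finish the survivors with a known non-cograph subgroup. Your $f$ even branch is correct and in fact tidier than the paper's: the observation that a Fermat prime $q+1$ with $q\geq 16$ forces $|\pi(q^2-1)|\geq 3$ via $2^{2^{n}}-1=\prod_{k<n}F_k$ cleanly disposes of the prime-power subcase, and your structure computation $(C_{q+1})^2/C_3\cong C_{3^s}\times C_{3^{s-1}}\times C_m^2$ is correct and consistent with the subgroup $C_{q+1}\times C_{(q+1)/3}$ that the paper quotes.

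The gap is the endgame at $q=8$: $\PSL(2,7)$ is \emph{not} a subgroup of $\PSU(3,8)$. Here $|\PSU(3,8)|=2^9\cdot 3^4\cdot 7\cdot 19$, and its maximal subgroups are the Borel subgroup $2^{3+6}{:}C_{21}$, the non-isotropic point stabiliser of order $1512$ (which contains $\SU(2,8)\cong\PSL(2,8)$), the torus normaliser of order $162$, $C_{19}{:}C_3$, and the subfield subgroup $\PSU(3,2)$ of order $72$; the only non-solvable composition factor available is $\PSL(2,8)$, which has no subgroup of index $3$, so no copy of the order-$168$ simple group $\PSL(2,7)$ exists. The repair is immediate: use $\PSL(2,8)=\SU(2,8)\leq\PSU(3,8)$ together with Theorem \ref{even_case}, which shows $\Gamma_{S}(\PSL(2,8))$ is not a cograph. (Incidentally, the paper's own aside that $\PSU(3,8)$ contains $A_6$ and $A_7$ is also false, since $5\nmid|\PSU(3,8)|$; but that step is redundant in the paper's proof.) More efficiently, your entire $f$ odd branch --- including the $q=8$ special case and the delicate quotient computation --- can be replaced by the paper's single observation that when $3\mid q+1$ the group $\PSU(3,q)$ contains $C_{(q+1)/3}\times\PSL(2,q)$ inside the stabiliser of a non-isotropic point, whereupon Theorem \ref{even_case} eliminates every even $q\geq 4$ with $f$ odd at once.
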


\begin{proof}
If $q$ is even, then $gcd(q+1, q-1)=1$. Then $G$ contains a subgroup $C_{q\pm 1}\times C_{(q+1)/gcd(3, q+1)}$. If $gcd(3, q+1)=3$ then $\PSU(3, q)$ contains a subgroup $C_{q+1/3}\times \PSL(2, q)$. As, $\PSL(2, q)$ is cograph for $q=2$ so if $q\geq 4$, $\Gamma_{S}(\PSU(3, q)$ is not a cograph. \\
Now if $gcd(3, q+1)=1$ then $q+1$ must be prime due to Catalan's conjecture. Using same argument we get $q-1$ must be prime or $1$, this leads to contradiction unless $q=4, 8$. But $\PSU(3, 4)$ contains $A_{5}$. On the other hand, $\PSU(3, 8)$ contains $A_{6}, A_{7}$. As neither $\Gamma_{S}(A_{6})$ nor $\Gamma_{S}(A_{7})$ is a cograph, so $\Gamma_{S}(PSU(3, 8))$ is not a cograph. \\
\end{proof}

\begin{remark}
Let $G\cong \PSU(3, q)$, where $q$ is a power of odd prime.\\
Then $\PSU(3, q)$ contains cyclic subgroups of order $(q-1)(q+1)/gcd(3, q+1)$. Clearly both $q-1, (q+1)/gcd(3, q+1)$ are even numbers. Thus, $(q-1)(q+1)/gcd(3, q+1)=2^{a}p^{b}$, where $p$ is an odd prime and $a>1, b \geq 0$. \\
If $b=0$, we get $q=3$ or $5$. But if $q=3$, $\PSU(3, 3)$ contains a subgroup $C_{4}. S_{4}$ (third non-split extention by $C_4$ of $S_4$ acting via $(S_4/A_4)=C_2$) which contains $D_{6}$ as a subgroup whose intersection subgroup graph is not a cograph. If $q=5$, $\PSU(3, 5)$ contains $A_{7}$.\\
If $b\geq 1$, one of $q-1, (q+1)/gcd(3, q+1)$ is power of $2$ only. Without loss of generality, let $q-1=2^{k}$. Then, $q=2^{k}+1$, which is either an odd prime or $9$ (by Catalan's conjecture). Then, $q+1=2(2^{k-1}+1)$. If $q\neq 3, 9$ then $2^{k-1}+1$ is either $p^{b}$ or $3p^{b}$. \\
If \textbf{$2^{k-1}+1=3p^{b}$}, then $\PSU(3, q)$ contains a subgroup (say $H_{1}$) isomorphic to $C_{q+1}\times C_{(q+1)/gcd(3, q+1)}$. Since $6p|(q+1)$, so $C_{6p}\times C_{2p}$ is contained in $H_{1}$. As $\Gamma_{S}(C_{6p}\times C_{2p})$ is not cograph so is $\Gamma_{S}(H_{1})$. Hence, $2^{k-1}+1=p^{b}$. If $b>1$, by using Catalan's conjecture we get $q=17$. \\
If $b=1$, we have $2^{k}+1=p_{1}(say), 2^{k-1}+1=p$. This gives a linear diophantine equation $2p \setminus p_{1}=1$. This equation has infinitely many solutions including $(p, p_{1})=(3, 5), (7, 13), (19, 37), (37, 73)$ etc. which gives $q=5, 13, 37,...etc$.
\end{remark}

\begin{problem}
Does there exist any odd prime $p$ such that $\Gamma_{S}(\PSU(3, q))$ (for $q=p^{n}$) is a cograph?
\end{problem}

\subsection{Exceptional groups of Lie type and Tits group}
The groups ${}^{2}{B_{2}}(q)$ where $q=2^{2e+1}~(q \geq 8)$, ${}^{2}{G_{2}}(q)$, where $q=3^{2e+1}~(q \geq 9)$, $G_{2}(q)$, ${ }^{3}{D_{4}}(q)$, $F_{4}(q)$, ${ }^{2}{F_{4}}(q)$, $E_{6}(q)$, ${ }^{2}{E_{6}}(q)$, $E_{7}(q)$ and $E_{8}(q)$ are the exceptional groups of Lie type and the Tits group is ${}^{2}{F_{4}}(2)'$.\\
The main theorem of this section is following:
\begin{theorem}
Let $G$ be an exceptional Lie type simple group. Then $\Gamma_{S}(G)$ is cograph if and only if $G={}^{2}{B_{2}}(q)$ where $q=2^{2e+1}~(q \geq 8)$ such that the numbers $q-1, q\pm \sqrt{2q}+1$ is either prime power or of the form $p^{a}q^{b}~(a, b\geq 1)$.
\end{theorem}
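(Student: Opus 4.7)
The plan is to exploit the subgroup-closed property of $\Gamma_{S}$ together with the classifications already proved for nilpotent groups (Theorem~\ref{th_nil_co}), for $\PSL(2,q)$ (Theorems~\ref{even_case} and~\ref{odd_case}), for $\PSL(3,q)$ and $\PSU(3,q)$, and for the symmetric and alternating groups (Theorem~\ref{th_sym}), to eliminate every exceptional family except the Suzuki groups, and then to analyze the subgroup lattice of $\Sz(q)={}^{2}B_{2}(q)$ directly. The skeleton is: (Step A) rule out all non-Suzuki exceptional types; (Step B) for Suzuki groups, extract necessity from Theorem~\ref{th_nil_co} applied to the Hall tori and verify sufficiency by a case analysis on induced $P_{4}$'s.

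For Step A, I would treat each family in turn by locating a subgroup whose intersection subgroup graph has already been shown not to be a cograph. For ${}^{2}G_{2}(q)$ with $q=3^{2e+1}\geq 27$, $G_{2}(q)$, ${}^{3}D_{4}(q)$, $F_{4}(q)$, ${}^{2}F_{4}(q)$ with $q\geq 8$, $E_{6}(q)$, ${}^{2}E_{6}(q)$, $E_{7}(q)$ and $E_{8}(q)$, a Levi or parabolic descent produces a copy of $\PSL(2,q')$, $\PSL(3,q')$, $A_{n}$ or $S_{n}$ (some $n\geq 5$) whose intersection graph is known to contain an induced $P_{4}$. A uniform device for the larger types is the existence of a semisimple maximal torus whose order is divisible by three distinct primes, or whose order has a shape excluded by Theorem~\ref{th_nil_co}; the $P_{4}$ then appears inside this nilpotent torus and lifts to $G$ by subgroup-closedness. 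The small cases ${}^{2}G_{2}(3)'\cong \PSL(2,8)$, the Tits group ${}^{2}F_{4}(2)'$ (which contains $\PSL(3,3)$ and $A_{6}.2^{2}$), and $G_{2}(2)'\cong \PSU(3,3)$ are handled one by one by inspection of their maximal subgroup lists, invoking the non-cograph results already in hand.

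For Step B, I would invoke Suzuki's classification of maximal subgroups of $G=\Sz(q)$: a Borel $B\cong [q^{2}]\!:\!C_{q-1}$, a dihedral normalizer $N_{G}(C_{q-1})\cong D_{2(q-1)}$, and two Hall-torus normalizers $C_{q+\sqrt{2q}+1}\!:\!C_{4}$ and $C_{q-\sqrt{2q}+1}\!:\!C_{4}$. The three integers $q-1,\ q+\sqrt{2q}+1,\ q-\sqrt{2q}+1$ are pairwise coprime and together with the Sylow $2$-subgroup exhaust the element orders of $G$. Necessity is immediate: each cyclic Hall subgroup $C_{q-1}$, $C_{q\pm\sqrt{2q}+1}$ sits inside $G$, so by subgroup-closedness and Theorem~\ref{th_nil_co} its order must be a prime power or of the form $p^{a}q^{b}$. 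For sufficiency I would take an alleged induced path $H_{1}\sim H_{2}\sim H_{3}\sim H_{4}$ in $\Gamma_{S}(G)$ and show it always gains a chord: two subgroups whose orders divide the same Hall number sit inside one cyclic Hall group, which under the arithmetic hypothesis yields a cograph, while two subgroups whose orders are coprime are automatically adjacent; distributing $H_{1},\dots,H_{4}$ among the four types of maximal subgroups then forces an edge between non-consecutive vertices in every configuration.

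The main obstacle will be handling the $2$-local contribution to sufficiency. The Sylow $2$-subgroup of $\Sz(q)$ is a special Suzuki $2$-group of exponent $4$ and order $q^{2}$ whose centre is elementary abelian of order $q$ and consists precisely of the involutions, and the involutions of $G$ form a single conjugacy class with each involution lying in a unique Sylow $2$-subgroup. I would use these facts to argue that whenever a vertex of the prospective $P_{4}$ has even order, its intersections with the remaining vertices are rigid enough to force adjacency either with a neighbour-of-a-neighbour or with the far endpoint; subgroups inside the Borel which mix a $2$-part and a piece of $C_{q-1}$ are the most delicate and require a separate examination using that $C_{q-1}$ acts on the $2$-part fixed-point-freely. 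Once this $2$-local rigidity is combined with the cograph criterion applied to each of $C_{q-1}$, $C_{q+\sqrt{2q}+1}$, $C_{q-\sqrt{2q}+1}$, the stated arithmetic characterisation follows.
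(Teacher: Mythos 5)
Your Step A and the necessity half of Step B are sound and essentially reproduce what the paper does: the non-Suzuki exceptional types are eliminated by locating subgroups such as $\PSL(2,q')$, $A_n$, $S_n$ or tori excluded by Theorem \ref{th_nil_co} (this is the content of Lemmas \ref{Ree} and \ref{ex_gr}), and the arithmetic conditions on $q-1$ and $q\pm\sqrt{2q}+1$ follow from applying Theorem \ref{th_nil_co} to the cyclic Hall subgroups of ${}^{2}B_{2}(q)$, exactly as in Lemma \ref{Sz}.

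The genuine gap is in your sufficiency argument for the Suzuki groups, and it is fatal. You yourself record the fact that destroys it: the Sylow $2$-subgroup of ${}^{2}B_{2}(q)$ has elementary abelian centre of order $q=2^{2e+1}\geq 8$, hence contains $(C_{2})^{3}=\langle a\rangle\times\langle b\rangle\times\langle c\rangle$. Taking $H_{1}=\langle a,b\rangle$, $H_{2}=\langle c\rangle$, $H_{3}=\langle a\rangle$, $H_{4}=\langle b,c\rangle$ gives an induced $P_{4}$ (consecutive pairs intersect trivially, non-consecutive pairs share $\langle a\rangle$, $\langle c\rangle$, $\langle b\rangle$ respectively), all four vertices of even order and all inside a single Sylow $2$-subgroup; this is exactly the configuration excluded for cographs in Proposition \ref{lm_p_co}. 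Consequently no ``$2$-local rigidity'' argument can force a chord onto every even-order configuration, and your plan of distributing the $H_{i}$ among maximal subgroups and reducing to the cyclic Hall pieces cannot close. (The same objection applies to the paper's own proof of Lemma \ref{Sz}, which reduces the converse to the \emph{maximal cyclic} subgroups and checks the $2$-part only at $q=8$, thereby overlooking the non-cyclic $2$-subgroups; the induced $P_{4}$ above shows $\Gamma_{S}({}^{2}B_{2}(q))$ is in fact never a cograph for $q\geq 8$, so the sufficiency direction of the stated theorem is not provable by any route.) If you want a correct statement, the ``if'' direction must be dropped or the $2$-local obstruction must be added to the hypotheses.
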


We prove this theorem with the help of  following three lemmas.

\begin{lemma}
\label{Ree}
Let $G$ be the Ree group ${}^{2}{G_{2}}(q)$, where $q=3^{2e+1}$. Then $\Gamma_{S}(G)$ is not a cograph.
\end{lemma}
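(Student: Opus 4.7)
My plan is to exploit the fact, recalled in the introduction, that the intersection subgroup graph is subgroup-closed in the sense that $\Gamma_S(H)$ is an induced subgraph of $\Gamma_S(G)$ whenever $H\le G$. Since the class of cographs is closed under taking induced subgraphs (they are precisely the $P_4$-free graphs), it suffices to exhibit a single subgroup $H$ of ${}^{2}G_{2}(q)$ whose intersection subgroup graph already contains an induced $P_4$.

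The natural candidate is $H\cong \PSL(2,q)$. It is a classical structural fact about the Ree groups that the centralizer of an involution $t\in {}^{2}G_{2}(q)$ has the form $C_{G}(t)\cong \langle t\rangle \times \PSL(2,q)$; in particular $\PSL(2,q)$ embeds into ${}^{2}G_{2}(q)$. The hypothesis $q=3^{2e+1}\ge 9$ forces $e\ge 1$, hence $q\ge 27$. In particular $q$ is an odd prime power different from $3$.

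Now I apply Theorem \ref{odd_case}, which classifies those odd prime powers $q$ for which $\Gamma_S(\PSL(2,q))$ is a cograph as precisely $q=3$. Since our $q\ge 27$, $\Gamma_S(\PSL(2,q))$ contains an induced $P_4$. Transporting this induced $P_4$ into $\Gamma_S({}^{2}G_{2}(q))$ via the subgroup-closed property yields an induced $P_4$ there as well, and therefore $\Gamma_S({}^{2}G_{2}(q))$ is not a cograph.

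The only potential obstacle is justifying the embedding $\PSL(2,q)\hookrightarrow {}^{2}G_{2}(q)$; but this is entirely standard (see, e.g., the description of involution centralizers in the Ree groups in Wilson's book or the $\mathbb{ATLAS}$) and requires no new computation. Everything else reduces to a direct invocation of Theorem \ref{odd_case} together with the subgroup-closed property, so the proof will be short.
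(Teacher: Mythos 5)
Your proof is correct, and it takes a genuinely different route from the paper's. The paper also starts from the involution centralizer $C_{2}\times\PSL(2,q)$, but it never uses the $\PSL(2,q)$ factor as a whole: it only extracts the abelian subgroups $C_{2}\times C_{(q\pm1)/2}$ and then runs a number-theoretic case analysis (parity considerations plus Mih\`ailescu's theorem on Catalan's equation) to show that $(q+1)/2$ and $(q-1)/2$ cannot simultaneously have the shapes needed for a cograph. You instead invoke Theorem \ref{odd_case} directly on the $\PSL(2,q)$ subgroup: since $q=3^{2e+1}\ge 27$ (so $q\ne 3$), that theorem already yields an induced $P_4$, and the subgroup-closed property transports it into ${}^{2}G_{2}(q)$. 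Your reduction is legitimate because Theorem \ref{odd_case} is proved earlier in the paper and its relevant case here is just the elementary abelian subgroup $(C_{3})^{2e+1}$ with $2e+1\ge 3$, so no circularity arises; the embedding of $\PSL(2,q)$ via the involution centralizer is the same classical fact (Ward) that the paper itself cites. What your approach buys is the complete elimination of the number theory; what the paper's approach buys is independence from the strength of Theorem \ref{odd_case} (it would survive even if only the weaker statement about cyclic and $C_2\times C_m$ subgroups were available). One small point to keep explicit: the argument does rely on $e\ge 1$ (equivalently $q\ge 27$), which you correctly note follows from the standing convention $q\ge 9$ for the Ree groups; for $q=3$ the group ${}^{2}G_{2}(3)$ is not simple and $\PSL(2,3)\cong A_{4}$ would not supply a $P_4$, so that exclusion is not merely cosmetic.
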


\begin{proof}
The centralizer of involution in Ree group ${}^{2}{G_{2}}(q)$ forms the subgroup $C_{2}\times \PSL(2, q)$ (see \cite{ward}), which contains subgroups $C_{2}\times C_{(q\pm 1)/2}$. This concludes that the numbers $(q\pm 1)/2$ are either power of $2$ or power of some odd prime.\\
If both are power of $2$, then we get a solution to Catalan's conjecture. This contradicts the result of Mih\`ailescu's Theorem.\\
Now if both are power of odd primes using similar argument we conclude there does not exist such $q$. Also, they are of opposite parity so both of them cannot be odd prime.\\
On the other hand if one is power of odd prime and another is power of $2$, then we must have $\dfrac{q+1}{2}=9$ and $\dfrac{q-1}{2}=8$ (by  Mih\`ailescu's Theorem) and $q=17$. But this is impossible as $q$ is odd power of $3$. Similarly if one is an odd prime and other is power of $2$ there does not exist any $q$ which is odd power of $q$.\\
Thus, there does not exist any $q=3^{2e+1}$ such that $\Gamma_{S}(G)$ is a cograph.
\end{proof}

\begin{problem}
Does there exist $q$ such that $\Gamma_{S}({}^{2}{G_{2}}(q))$ is chordal?
\end{problem}

\begin{lemma}
\label{Sz}
Let $G\cong {}^{2}{B_{2}}(q)$, where $q$ is odd prime power of $2$ ($q \geq 8$). Then $\Gamma_{S}(G)$ is cograph if and only if $q \neq 8$ such that the numbers $q-1, q\pm \sqrt{2q}+1$ are either prime power or of the form $p^{a}q^{b}$, where $a, b \geq 1$. \\
Moreover, there does not exist any $q$ such that $\Gamma_{S}(G)$ is chordal.
\end{lemma}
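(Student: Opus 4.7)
The plan is to combine the subgroup-closed property of $\Gamma_S$ with the known maximal-subgroup structure of the Suzuki simple group $G = \Sz(q)$ for $q = 2^{2e+1} \geq 8$. Recall that $G$ has a Borel subgroup $B = P \rtimes T_0$ of order $q^2(q-1)$, where $P$ is a Sylow $2$-subgroup of order $q^2$ and $T_0$ is cyclic of order $q-1$; in addition $G$ has two further conjugacy classes of cyclic maximal tori $T_+, T_-$ of orders $q+\sqrt{2q}+1$ and $q-\sqrt{2q}+1$, whose normalizers are the Frobenius groups $N(T_\pm) = T_\pm \rtimes C_4$. The three torus orders are pairwise coprime, and it is a classical fact that the Sylow $2$-subgroups of $G$ form a trivial-intersection (TI) set.

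For the necessity direction of the cograph characterization, suppose $\Gamma_S(G)$ is a cograph. Each torus $T_i$ is a cyclic subgroup of $G$, so by the subgroup-closed property $\Gamma_S(T_i)$ is also a cograph, and Theorem~\ref{th_nil_co} together with the $p$-group case (Proposition~\ref{lm_p_co}) forces each of $q-1$ and $q \pm \sqrt{2q}+1$ to be a prime power or of the form $p^a q^b$ with $a, b \geq 1$. To exclude $q = 8$ separately, I will exhibit an explicit induced $P_4$ in $\Gamma_S(\Sz(8))$; since the torus orders are the primes $7, 13, 5$ in this case so that the arithmetic hypothesis is vacuous, the obstruction has to come from the $2$-local structure, and I will construct the $P_4$ from four carefully chosen non-trivial subgroups of the Borel $B = P : C_7$ of order $448$.

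For the sufficiency direction, assume $q \neq 8$ and that the arithmetic hypotheses hold. I will show that $\Gamma_S(G)$ contains no induced $P_4$ by classifying four-vertex configurations according to which conjugacy class of maximal subgroup each vertex lies in. Because subgroups drawn from tori in distinct conjugacy classes are automatically adjacent (by pairwise coprimality of torus orders), any induced $P_4$ must be confined either to a single Frobenius normalizer $N(T_\pm)$ or to the Borel $B$. In the first case, the Frobenius structure of $N(T_\pm) = T_\pm \rtimes C_4$ combined with Theorem~\ref{th_nil_co} applied to $T_\pm$ rules out a $P_4$; in the second case, the arithmetic constraint on $q-1$ combined with the decomposition $B = P \rtimes T_0$ gives the same conclusion. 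The main obstacle I anticipate is the Borel case, since $P$ is a Suzuki $2$-group with a rich subgroup lattice and interleaving its subgroups with the cyclic complement $T_0$ requires a careful combinatorial analysis.

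Finally, for the ``moreover'' chordality assertion, I will exhibit an induced $4$-cycle in $\Gamma_S(G)$ for every $q \geq 8$ by exploiting the TI property of the Sylow $2$-subgroups. Let $P, P'$ be two distinct Sylow $2$-subgroups of $G$ and pick, inside each, a pair of distinct non-trivially intersecting subgroups $K_1, K_2 \leq P$ and $L_1, L_2 \leq P'$; such pairs always exist since each Sylow has order $q^2 \geq 64$ and its centre is non-trivial, so one may take $K_1 = \langle z \rangle$ for $z \in Z(P) \setminus \{e\}$ and $K_2 \supsetneq K_1$, and analogously for $P'$. By TI we have $K_i \cap L_j \subseteq P \cap P' = \{e\}$ for all $i, j$, while $K_1 \cap K_2 \neq \{e\}$ and $L_1 \cap L_2 \neq \{e\}$. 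The four subgroups $K_1, L_1, K_2, L_2$ therefore induce in $\Gamma_S(G)$ the complete bipartite graph $K_{2,2}$, which is exactly a chordless $4$-cycle. Hence $\Gamma_S(G)$ is never chordal, uniformly in $q$.
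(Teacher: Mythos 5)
Your chordality argument is correct and is actually cleaner than the paper's: the paper splits into $q=8$ (where it finds a $4$-cycle inside $C_{2}^{2}\times C_{4}\leq P$) and $q>8$ (where it needs at least one of $q-1$, $q\pm\sqrt{2q}+1$ to be composite and builds a $4$-cycle from $C_{4}$, $C_{2}$ and a composite torus), whereas your $K_{2,2}$ built from two subgroups in each of two distinct Sylow $2$-subgroups works uniformly for all $q\geq 8$ once the TI property is invoked. The necessity of the arithmetic conditions via subgroup-closure applied to the three cyclic tori is the same argument as the paper's.

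The cograph half, however, has a genuine gap, and it sits exactly where you say you "anticipate the main obstacle." You never produce the promised induced $P_{4}$ in $\Sz(8)$, and you defer the Borel/Sylow-$2$ analysis in the sufficiency direction entirely. The paper handles $q=8$ by noting that the Sylow $2$-subgroup contains $C_{4}\times C_{2}\times C_{2}$, whose intersection subgroup graph contains the induced path $C_{2}\times C_{2}\times\{0\}$, $\{0\}\times\{0\}\times C_{4}$, $\{0\}\times C_{2}\times\{0\}$, $C_{2}\times\{0\}\times C_{2}$. But that observation is fatal to the deferred step: for \emph{every} $q=2^{2e+1}\geq 8$ the Sylow $2$-subgroup $P$ of $\Sz(q)$ has elementary abelian centre $Z(P)=\Omega_{1}(P)$ of order $q\geq 8$ and elements of order $4$ outside it, so $\langle x\rangle Z(P)\cong C_{4}\times(C_{2})^{2e}$ contains $C_{4}\times C_{2}\times C_{2}$, and by subgroup-closure $\Gamma_{S}(\Sz(q))$ contains an induced $P_{4}$ for all $q\geq 8$, not just $q=8$. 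So the "careful combinatorial analysis" of the Borel cannot be completed as you hope; the sufficiency direction fails outright (the paper's own "Converse Part is obvious" conceals the same error, since its necessity argument falsely assumes every vertex lies in a unique maximal cyclic subgroup). Separately, your reduction "any induced $P_{4}$ is confined to a single $N(T_{\pm})$ or to $B$" is unjustified: coprimality of torus orders only forces non-adjacent pairs to share a prime, and a $P_{4}$ may mix subgroups of even and of odd order drawn from several maximal subgroups, so the case analysis does not collapse the way you claim.
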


\begin{proof}
 First we suppose that $q>8$. Then the maximal cyclic subgroups are of order $4, q-1, q\pm \sqrt{2q}+1$. Note that these $4$ numbers are pairwise coprime and the last $3$ numbers are odd. These $4$ subgroups intersect trivially. Thus, any subgroup of $G$ is contained in only one maximal cyclic subgroup. This implies if $\Gamma_{S}(G)$ is cograph then each maximal subgroups are also so. Hence the numbers $q-1, q\pm \sqrt{2q}+1$ are either prime power or of the prescribed form in the statement of the theorem.\\
Converse Part is obvious.\\
If $q=8$, then the Sylow $2$-subgroup of $G$ is $(C_{2})^{3}._{84}(C_{2})^{3}$ (the Small group $(64, 82)$), which contains a subgroup $C_{2}^{2}\times C_{4}$. Then $\Gamma_{S}(C_{2}^{2}\times C_{4})$ contains a path $C_{2}\times C_{2}\times \{0\}, \{0\}\times \{0\}\times C_{4}, \{0\}\times C_{2}\times \{0\}, C_{2}\times \{0\}\times C_{2}$. Hence in this case $\Gamma_{S}(G)$ is not a cograph.\\
For the chordality, if $q=8$ then $G$ contains a subgroup $C_{2}^{2}\times C_{4}$. And $\Gamma_{S}(C_{2}^{2}\times C_{4})$ contains a $4$-cycle: $C_{2}\times C_{2}\times \{0\}, \{0\}\times \{0\}\times C_{4}, \{0\}\times C_{2}\times \{0\}, \{0\}\times \{0\}\times C_{2}, C_{2}\times C_{2}\times \{0\}$. Thus, $\Gamma_{S}(G)$ is not chordal.\\
Now assume $q>8$. Then at least one of $q-1, q \pm \sqrt{2q}+1$ is composite. Let $q+\sqrt{2q}+1$ is composite. Then consider $4$ subgroups $H_{1}, H_{2}, H_{3}, H_{4}$ such that $H_{1}\cong C_{4}, H_{3}\cong C_{2}, H_{2}\cong C_{q+\sqrt{2q}+1}, H_{4}\cong C_{p}$, where $p$ divides $q+\sqrt{2q}+1$. It is clear that these $4$ subgroups form the $4$ vertices of a cycle in $\Gamma_{S}(G)$. Similarly, if either $q-1$ or $q-\sqrt{2q}+1$ is composite we get a $4$-cycle in $\Gamma_{S}(G)$. Therefore, $\Gamma_{S}(G)$ is never chordal.

\end{proof}

\begin{lemma}
\label{ex_gr}
Let $G$ be the exceptional groups of Lie type (except the Suzuki group ${}^{2}{B_{2}}(q)$, where $q=2^{2n+1}~(\geq 8)$ and the Ree group ${}^{2}{G_{2}}(q)$, where $q=3^{2e+1}$) and the Tits group ${}^{2}{F_{4}}(2)'$. Then $\Gamma_{S}(G)$ is neither a cograph nor a chordal graph.
\end{lemma}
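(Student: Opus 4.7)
The plan is to invoke the subgroup-closed property of the intersection subgroup graph established in \cite{anderson}, which asserts that $\Gamma_S(H)$ is an induced subgraph of $\Gamma_S(G)$ whenever $H \leq G$. Accordingly, it suffices to exhibit, inside each of the named exceptional Lie-type groups and inside the Tits group $^{2}{F_{4}}(2)'$, a subgroup $H$ whose intersection subgroup graph has already been shown to be neither a cograph nor chordal. The natural universal witness is the alternating group $A_5$: the preceding sections produced in $\Gamma_S(A_5)$ both an induced $P_4$, namely $(D_5, A_4, C_5, S_3)$ (forbidding cograph), and an induced $4$-cycle, namely $(D_5, A_4, C_5, (C_2)^2, D_5)$ (forbidding chordal), so every group containing $A_5$ is disposed of in one stroke. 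A secondary witness is $S_4$, for which the symmetric-group theorem above produced analogous forbidden subgraphs.

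For the generic members of each family the embedding $A_5 \hookrightarrow G$ is automatic from classical subsystem structure. The group $G_2(q)$ contains a subsystem copy of $\PSL(2,q)$ (as well as $\SL_3(q)$ and $\SU_3(q)$ as maximal subgroups), and $A_5$ embeds in $\PSL(2,q)$ whenever $5 \mid q(q^2-1)$, which excludes only finitely many small $q$. The families $^{3}{D_{4}}(q), F_{4}(q), E_{6}(q), {}^{2}{E_{6}}(q), E_{7}(q), E_{8}(q)$ each contain $G_2(q)$ (or $\PSL(2,q^k)$ for small $k$) as a subsystem subgroup, so the failure of cographicity and chordality propagates upward. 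For the Ree--Tits family $^{2}{F_{4}}(q)$ with $q = 2^{2n+1} \geq 8$, the maximal subgroup list contains $\Sz(q) \wr 2$ together with parabolic Levi factors involving $\PSL(2,q^2)$, each of which contains $A_5$.

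The small exceptional members and the Tits group itself are then handled one-by-one via the $\mathbb{ATLAS}$. The Tits group $^{2}{F_{4}}(2)'$ has maximal subgroups including $\PSL(3,3){:}2$ and $\PSL(2,25)$, both of which contain $A_5$. Similarly, $G_2(2) \cong \PSU(3,3){:}2$ contains $\PSL(2,7)$, which in turn contains $S_4$, and $^{3}{D_{4}}(2)$ contains $\PSL(2,8){:}3$ and $\PSL(2,7)$, each of which contains a copy of $A_5$ or $S_4$. The remaining small-$q$ Chevalley and twisted groups are disposed of by the same ATLAS-level inspection.

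The main obstacle, and indeed essentially the only nontrivial step, is the case-by-case bookkeeping needed to confirm that the chosen seed subgroup (almost always $A_5$, occasionally $S_4$ or $\PSL(2,7)$) actually embeds into the smallest member of each family; this reduces to a finite collection of lookups in the maximal subgroup tables of the $\mathbb{ATLAS}$. Once these embeddings are in place, the conclusion follows immediately from the subgroup-closed property together with the already-established failures of cographicity and chordality for $A_5$, $S_4$, and $\PSL(2,7)$.
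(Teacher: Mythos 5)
Your proposal takes essentially the same route as the paper: both rely on the subgroup-closed property and reduce each exceptional family (via ATLAS/maximal-subgroup containments) to a small witness subgroup --- $A_5$, $S_4$, $\PSL(2,7)$, or a small $\PSL(2,q)$ --- whose intersection subgroup graph already contains an induced $P_4$ and an induced $C_4$. The only point to correct is your remark that $5\mid q(q^2-1)$ fails for ``only finitely many small $q$'': it fails for infinitely many $q$ (e.g.\ all $q=2^{2k+1}$, so $A_5\not\leq G_2(8)$), but your fallback witnesses $S_4$ and $\PSL(2,7)$ handle those cases, exactly as the paper's longer list of alternative witnesses does.
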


\begin{proof}
A) Let $G$ be the Tits group ${}^{2}{F_{4}}(2)'$. Since $G$ contains $\PSL(2, 25)$ whose intersection subgroup graph is not a cograph. So, $\Gamma_{S}(G)$ is neither a cograph nor a chordal graph.\\
B) Let $G$ be the group ${}^{2}{F_{4}}(q)$. This group encloses the Tits group ${}^{2}{F_{4}}(2)'$. Hence in this case $\Gamma_{S}(G)$ cannot be either a cograph or a chordal graph.\\
C)Let $G\cong F_{4}(q)$. Then for any values of $q$, $G$ contains one of the following subgroups: alternating groups $A_{7}, A_{9}, A_{10}$, the Mathieu group $M_{11}$ or the classical groups $\PSL(2, 7), \PSL(2, 25)$. But in any of the cases the intersection subgroup graph is neither a cograph nor a chordal graph.\\
D) Let $G$ be one of the group $E_{6}(q), {}^{2}{E_{6}}(q), E_{7}(q)$ and $E_{8}(q)$. Then in each of the cases $F_{4}(q)$ contained in $G$ as a subgroup. From C) we conclude that $\Gamma_{S}(G)$ is neither a cograph nor a chordal graph.\\
E) Now consider the group $G_{2}(q)$. Then for different values of $q$, $G$ contains one of the following groups as a subgroup: alternating groups $A_{5}, A_{6}, A_{7}$; the Janko groups $J_{1}, J_{2}$; the classical groups $\PSL(2, 7), \PSL(2, 8), \PSL(2, 13)$. But as none of these groups can have the intersection subgroup graph which is either cograph or chordal so $\Gamma_{S}(G)$ can never be a cograph as well as chordal graph.\\
F) If $G\cong {}^{3}{D_{4}}(q)$, then $G_{2}(q)$ is contained in $G$ as a subgroup. This implies $\Gamma_{S}(G)$ is neither a cograph nor a chordal graph.
\end{proof}

\subsection{Torsion-free nilpotent group}
In this section the major conclusion is that the intersection subgroup graph of a torsion-free nilpotent group is neither a cograph nor a chordal graph. 
\begin{theorem}
\label{th_tor_abl}
Let $G$ be a torsion-free abelian group. Then $\Gamma_{S}(G)$ is neither a cograph nor a chordal graph.
\end{theorem}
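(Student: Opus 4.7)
The plan is to use the subgroup-closed property of $\Gamma_{S}$ proved by Anderson et al.: both a $C_{4}$ and a $P_{4}$ will be exhibited inside a small free abelian subgroup of $G$ and then pulled back as induced subgraphs of $\Gamma_{S}(G)$. Since $G$ is torsion-free abelian and $\Gamma_{S}(G)$ has at least one vertex, I may fix two $\mathbb{Z}$-linearly independent elements $x, y \in G$, giving a copy of $\mathbb{Z}^{2} = \langle x, y\rangle$ inside $G$. (If no two such elements exist then every pair of non-trivial subgroups of $G$ already meets non-trivially, so $\Gamma_{S}(G)$ has no vertices and the statement is vacuous.)

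For non-chordality I would take in $\mathbb{Z}^{2}$ the four rank-one subgroups
\[A = \langle x\rangle,\quad B = \langle y\rangle,\quad C = \langle 2x\rangle,\quad D = \langle 2y\rangle.\]
A direct computation shows that the only non-trivial pairwise intersections among these are $A \cap C = C$ and $B \cap D = D$; the four ``side'' intersections $A \cap B$, $B \cap C$, $C \cap D$, $D \cap A$ are each trivial. Thus $\{A, B, C, D\}$ induces a $4$-cycle in $\Gamma_{S}(\mathbb{Z}^{2})$, and by subgroup-closedness $\Gamma_{S}(G)$ contains an induced $C_{4}$, so it is not chordal.

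For non-cographness I would adjoin a third $\mathbb{Z}$-independent element $z \in G$ to produce $\mathbb{Z}^{3} = \langle x, y, z\rangle \leq G$, and then consider the four subgroups
\[A = \langle x, y\rangle,\quad B = \langle z\rangle,\quad C = \langle x\rangle,\quad D = \langle y, z\rangle.\]
One verifies that $A \cap B = B \cap C = C \cap D = \{0\}$, whereas $A \cap C = \langle x\rangle$, $B \cap D = \langle z\rangle$, and $A \cap D = \langle y\rangle$ are all non-trivial; hence $A,B,C,D$ induce a $P_{4}$ in $\Gamma_{S}(\mathbb{Z}^{3})$, which transfers by the subgroup-closed property to an induced $P_{4}$ in $\Gamma_{S}(G)$.

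The main obstacle, and the delicate point of the theorem, is exactly the jump from $\mathbb{Z}^{2}$ to $\mathbb{Z}^{3}$ needed for the $P_{4}$: inside $\mathbb{Z}^{2}$ alone two rank-one subgroups are adjacent precisely when their generators lie on different $\mathbb{Q}$-lines, which makes $\Gamma_{S}(\mathbb{Z}^{2})$ complete multipartite and therefore a cograph. The proof is thus forced to build the $P_{4}$ by genuinely using a third independent direction in $G$, and the proposal handles this by extracting $\langle x, y, z\rangle \cong \mathbb{Z}^{3}$ from $G$ before applying the subgroup-closed reduction.
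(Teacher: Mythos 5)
Your two explicit configurations are correct and worth keeping: in $\mathbb{Z}^{2}$ the subgroups $\langle x\rangle,\langle y\rangle,\langle 2x\rangle,\langle 2y\rangle$ do induce a $C_{4}$, and in $\mathbb{Z}^{3}$ the subgroups $\langle x,y\rangle,\langle z\rangle,\langle x\rangle,\langle y,z\rangle$ do induce a $P_{4}$; together with the subgroup-closed property this is a valid argument whenever $G$ contains three $\mathbb{Z}$-independent elements. This is a genuinely different route from the paper's, which reduces everything to the single claim that $\Gamma_{S}(\mathbb{Z})$ already contains a $P_{4}$ and a $C_{4}$, resting on the assertion that $m\mathbb{Z}\cap n\mathbb{Z}=\{0\}$ whenever $\gcd(m,n)=1$. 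That assertion is false: $m\mathbb{Z}\cap n\mathbb{Z}=\operatorname{lcm}(m,n)\mathbb{Z}$ is never trivial for $m,n\neq 0$, so no two subgroups of $\mathbb{Z}$ are adjacent and $\Gamma_{S}(\mathbb{Z})$ has empty vertex set. Your move up to rank $2$ and rank $3$ is exactly the repair the paper's argument needs, and your observation that $\Gamma_{S}(\mathbb{Z}^{2})$ is complete multipartite (hence a cograph) correctly identifies why a third independent direction is unavoidable for the $P_{4}$.

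The genuine gap is that the theorem quantifies over \emph{all} torsion-free abelian groups, and your argument does not — and cannot — cover the low-rank ones. For rank $1$ (e.g.\ $\mathbb{Z}$ or any subgroup of $\mathbb{Q}$) the graph has no vertices; contrary to your parenthetical, this does not make the statement vacuous, since the empty graph contains no induced $P_{4}$ and no induced cycle of length $\geq 4$, so by the forbidden-subgraph definitions it \emph{is} a cograph and \emph{is} chordal, and the stated theorem fails there. For rank $2$ your own analysis shows $\Gamma_{S}(\mathbb{Z}^{2})$ is a cograph, so the ``not a cograph'' half fails for $G=\mathbb{Z}^{2}$ as well. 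What you have actually proved is the corrected statement: if $G$ is torsion-free abelian of rank at least $2$ then $\Gamma_{S}(G)$ is not chordal, and if the rank is at least $3$ then it is also not a cograph, and neither rank hypothesis can be dropped. You should present the result in that form (and note that the downstream corollary for torsion-free nilpotent groups inherits the same restriction on the rank of the centre) rather than claim the theorem as written.
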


\begin{proof}
Let, $G$ be a torsion-free abelian group and $H$ be a finitely generated subgroup of $G$. Then $H$ is free abelian group which is direct product of countable copies of $\mathbb{Z}$. Now, we assert that $\Gamma_{S}(H)$ is neither a cograph nor a chordal graph. To prove this assertion first we prove the following lemma.
\begin{lemma}
\label{lm_tor}
 $\Gamma_{S}(\mathbb{Z})$ is neither a cograph nor a chordal graph.
\end{lemma}

\begin{proof}
 Clearly, every non-trivial subgroup of $\mathbb{Z}$ is cyclic group of the form $(m\mathbb{Z}, +)$. Clearly, $m\mathbb{Z}\cap n\mathbb{Z}=\{0\}$ if and only if $(m, n)=1$ for $m, n\in \mathbb{Z}$.
Now, $\Gamma_{S}(\mathbb{Z})$ contains a $4$-vertex path $(10\mathbb{Z}, 3\mathbb{Z}, 5\mathbb{Z}, 6\mathbb{Z})$. Hence, $\Gamma_{S}(\mathbb{Z})$ is not a cograph.\\
Analogously, $\Gamma_{S}(\mathbb{Z})$ contains a $4$-vertex induced cycle: $(2\mathbb{Z}, 3\mathbb{Z}, 4\mathbb{Z}, 9\mathbb{Z}, 2\mathbb{Z})$; this proves that $\Gamma_{S}(\mathbb{Z})$ is not a chordal graph.
\end{proof}
If $H$ is a finitely generated abelian group then it is direct product of finite copies of $\mathbb{Z}$. Since, $\Gamma_{S}(\mathbb{Z})$ is neither a cograph nor a chordal graph (according to the Lemma \ref{lm_tor}) so is $\Gamma_{S}(H)$. Again, as $H$ is a subgroup of $G$ so $\Gamma_{S}(G)$ is neither a cograph nor a chordal graph.
\end{proof}

\begin{theorem}
Let $G$ be a torsion-free nilpotent group. Then $\Gamma_{S}(G)$ is neither a cograph nor a chordal graph.
\end{theorem}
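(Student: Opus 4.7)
The plan is to reduce the statement directly to the previous theorem (Theorem \ref{th_tor_abl}), or even further to its core ingredient Lemma \ref{lm_tor}, by exploiting two facts: a torsion-free group contains a copy of $\mathbb{Z}$, and the intersection subgroup graph is subgroup-closed (a property established in \cite{anderson} and quoted in the introduction). Notice that the nilpotent hypothesis is not actually needed in this direction; all we use is that $G$ is non-trivial and torsion-free.

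The first step would be to pick any non-identity element $g \in G$ (such $g$ exists whenever $G$ is nontrivial, and a nontrivial torsion-free nilpotent group is certainly nontrivial). Since $G$ is torsion-free, $g$ has infinite order, so $\langle g \rangle \cong (\mathbb{Z}, +)$. Invoking the subgroup-closed property, $\Gamma_{S}(\langle g \rangle)$ sits inside $\Gamma_{S}(G)$ as an induced subgraph, and hence $\Gamma_{S}(\mathbb{Z})$ does too.

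The second step is to quote Lemma \ref{lm_tor}, which exhibits an induced $P_{4}$, namely $(10\mathbb{Z},\,3\mathbb{Z},\,5\mathbb{Z},\,6\mathbb{Z})$, and an induced $4$-cycle $(2\mathbb{Z},\,3\mathbb{Z},\,4\mathbb{Z},\,9\mathbb{Z},\,2\mathbb{Z})$, in $\Gamma_{S}(\mathbb{Z})$. Both of these induced configurations then lift, via the embedding of the previous paragraph, to induced configurations inside $\Gamma_{S}(G)$. Because the classes of cographs ($P_{4}$-free graphs) and chordal graphs (graphs with no induced cycle of length $\geq 4$) are both closed under taking induced subgraphs, the presence of a $P_{4}$ forces $\Gamma_{S}(G)$ to fail to be a cograph and the presence of a chordless $C_{4}$ forces it to fail to be chordal. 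This completes the argument.

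The main ``obstacle'' is really not an obstacle at all: the proof is essentially one line once Lemma \ref{lm_tor} and the subgroup-closed property are available. The only point to verify carefully is that the subgroups $m\mathbb{Z} \leq \langle g \rangle \leq G$ genuinely appear as vertices of $\Gamma_{S}(G)$, i.e.\ that each $m\mathbb{Z}$ has some nontrivial subgroup of $G$ meeting it trivially; but this is automatic from the subgroup-closed property, which guarantees that a trivial intersection inside $\langle g \rangle$ continues to be a trivial intersection inside $G$. Hence we need not revisit any computation, and the theorem follows immediately from Theorem \ref{th_tor_abl}.
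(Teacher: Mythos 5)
Your reduction is essentially the paper's, only more direct: the paper passes from $G$ to its centre $Z(G)$ (a nontrivial torsion-free abelian subgroup, using nilpotency), then through Theorem \ref{th_tor_abl} down to a copy of $\mathbb{Z}$ and Lemma \ref{lm_tor}; you go straight to $\langle g\rangle\cong\mathbb{Z}$ for a non-identity element $g$ and invoke Lemma \ref{lm_tor} together with the subgroup-closed property. Your observation that nilpotency is never actually used is correct, and the shortcut through a cyclic subgroup is a genuine (if minor) simplification over the detour through the centre.

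There is, however, a real problem, which your argument inherits from the paper rather than introduces: Lemma \ref{lm_tor} is false. For nonzero integers $m,n$ one has $m\mathbb{Z}\cap n\mathbb{Z}=\mathrm{lcm}(m,n)\,\mathbb{Z}\neq\{0\}$; the claim that the intersection is trivial when $\gcd(m,n)=1$ is wrong (e.g.\ $3\mathbb{Z}\cap 5\mathbb{Z}=15\mathbb{Z}$). Hence no two nontrivial subgroups of $\mathbb{Z}$ intersect trivially, $\Gamma_{S}(\mathbb{Z})$ has empty vertex set, and the claimed induced $P_{4}$ and $C_{4}$ do not exist. The last paragraph of your proposal, where you assert that the subgroups $m\mathbb{Z}$ ``genuinely appear as vertices of $\Gamma_{S}(G)$'' because trivial intersections inside $\langle g\rangle$ persist in $G$, is precisely where the argument collapses: there are no such trivial intersections to persist. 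Indeed the theorem itself fails for $G=\mathbb{Z}$ (and for $\mathbb{Q}$), whose intersection subgroup graph is the null graph and therefore both a cograph and chordal, so no repair along these lines is possible without adding a hypothesis that rules out groups in which all nontrivial subgroups pairwise intersect nontrivially.
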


\begin{proof}
Let $G$ be a torsion-free nilpotent group. Then the center of $G$ (or, $Z(G)$) is a torsion-free abelian group. By the Theorem \ref{th_tor_abl}, the intersection subgroup graph of a torsion-free abelian group is neither cograph nor chordal. As $Z(G)$ is a subgroup of $G$, so $\Gamma_{S}(G)$ is neither a cograph nor a chordal graph.
\end{proof}

\begin{remark}
Let us consider the torsion-free group $\mathbb{Q}^{n}$ (for $n \geq2$). Then it contains $\mathbb{Z}^{n}$ as a subgroup. By Lemma \ref{lm_tor}, $\Gamma_{S}(\mathbb{Z})$ is neither a cograph nor a chordal graph. So, $
\Gamma_{S}(\mathbb{Q}^{n})$ is neither a cograph nor a chordal graph.
\end{remark}
\section{Acknowledgement}
The author Pallabi Manna is supported by Department of Atomic Energy (DAE), India and Santanu Mandal thanks to the University Grants Commission (UGC), India for the monetary support.

\section{Statements and Declarations} \textbf{Competing Interests:} The authors made no mention of any potential conflicts of interest.

\end{document}